\definecolor{darkgreen}{rgb}{0.1,0.6,0.1}
\newtheorem{theorem}{Theorem}[section]
\newtheorem{lemma}[theorem]{Lemma}
\newtheorem{claim}[theorem]{Claim}
\newtheorem{corollary}[theorem]{Corollary}
\newtheorem{observation}[theorem]{Observation}
\newcommand{\pr}{\mathbb{P}}
\newcommand{\jnote}[1]{}
\newcommand{\E}{{\mathbb E}}
\newcommand{\diam}{\mathrm{diam}}
\newcommand{\Lip}{\mathrm{Lip}}
\newcommand{\dem}{\mathsf{dem}}
\newcommand{\cp}{\mathsf{cap}}
\newcommand{\flow}{\varphi}
\newcommand{\dist}{\mathsf{dist}}
\newcommand{\remove}[1]{}
\newcommand{\1}{\mathbf{1}}
\newcommand{\len}{\mathsf{len}}
\newcommand{\e}{\varepsilon}
\newcommand{\minB}{{\textrm {hit}_B}}
\newcommand{\maxB}{{\textrm{cov}_B}}
\newcommand{\mcf}{\mathsf{mcf}}
\newcommand{\ext}{\mathsf{ext}}
\begin{document}

\title{{\bf A node-capacitated Okamura-Seymour theorem}}
\author{James R. Lee\thanks{Computer Science \& Engineering, University of Washington.  Research supported by NSF CCF-1217256 and a Sloan Research Fellowship.} \and
Manor Mendel\thanks{Mathematics and Computer Science Department, Open University of Israel.  Research partially supported by ISF grants 221/07, 93/11,
and BSF grants 2006009, 2010021.
This work was carried out during visits of the author to Microsoft Research
and the University of Washington.} \and Mohammad Moharrami$^*$}
\date{}

\maketitle

\begin{abstract}
The classical Okamura-Seymour theorem states that
for an edge-capacitated, multi-commodity flow instance in which all terminals
lie on a single face of a planar graph, there exists a feasible
concurrent flow if and only if the cut conditions are satisfied.
Simple examples show that a similar theorem is impossible in the node-capacitated setting.
Nevertheless, we prove that an approximate flow/cut theorem does hold:
For some universal $\varepsilon > 0$, if the node cut conditions are satisfied,
then one can simultaneously route an $\varepsilon$-fraction of all the demands.
This answers an open question of Chekuri and Kawarabayashi.
More generally, we show that this holds in the setting of multi-commodity polymatroid networks
introduced by Chekuri, et. al.
Our approach employs a new type of random metric embedding
in order to round the convex programs corresponding to these more general flow problems.
\end{abstract}

\setcounter{tocdepth}{2} {\small \tableofcontents}

\section{Introduction}

The relationship between flows and cuts in graphs has played a fundamental role in
combinatorial optimization.
A seminal result in the study of multi-commodity flows is
the classical Okamura-Seymour theorem \cite{OS81}
which we now recall.

An {\em undirected flow network} is an undirected graph $G=(V,E)$ together with a
capacity function  on edges
$\cp : E \to [0,\infty)$.  A set of {\em demands} is specified by a
symmetric mapping $\dem : V \times V \to \mathbb [0,\infty)$.
For $u,v\in V$, denote by $\flow_{uv}:E\to [0,\infty)$ the undirected $u$-$v$ flow.
The (edge) capacity constrains require that for every $e\in E$, $\sum_{u,v \in V} \flow_{uv}(e)\le \cp(e)$.
Given such an instance, let $\mcf_G(\cp,\dem)$ be the largest value
$\e$ such that one can simultaneous
route $\e \cdot \dem(u,v)$ units of flow between $u$ and $v$
for every $u,v \in V$ while not violating any of the edge capacities.
This optimization describes the {\em maximum concurrent flow problem.}

For two subsets $S,T \subseteq V$, let $\cp(S,T)$ denote the total
capacity of all edges with one endpoint in $S$ and one in $T$.
Similarly, let $\dem(S,T) = \sum_{u \in S} \sum_{v \in T} \dem(u,v)$.
To give an upper bound on $\mcf$, we can consider cuts in $G$,
described by subsets $S \subseteq V$.  To every such subset we assign a value
called the {\em sparsity} of the cut:
$$
\Phi_G(S; \cp, \dem) = \frac{\cp(S, \bar S)}{\dem(S,\bar S)}\,.
$$
It is straightforward to see that for any $S \subseteq V$, we have $\mcf_G(\cp,\dem) \leq \Phi_G(S;\cp,\dem)$.
The {\em sparsest cut} is the one which gives the best upper bound on $\mcf$.  In this vein, we define
$$
\Phi_G(\cp,\dem) = \min_{S \subseteq V} \Phi_G(S;\cp,\dem)\,.
$$
Thus we have the relationship $\mcf_G(\cp,\dem) \leq \Phi_G(\cp,\dem)$ and
the flow/cut gap question asks how close this upper bound is to the truth.

To state the Okamura-Seymour theorem, we need one final piece of notation.
We say that the demand function $\dem$ is {\em supported on a subset $D \subseteq V$}
if $\dem(u,v) > 0$ only when $u,v \in D$.  The classical Max-flow Min-cut Theorem \cite{FF56}
implies that if the demand $\dem$ is supported on a two-element subset $\{s,t\} \subseteq V$, then
for any capacities $\cp$, we have $\mcf_G(\cp,\dem) = \Phi_G(\cp,\dem)$.
An extension of Hu \cite{Hu63} shows that if $\dem$ is supported on a 4-element subset $D \subseteq V$,
the same equality holds.
The Okamura-Seymour theorem states that whenever $G$ is a planar
graph and the demand is supported on a single face, there
is likewise no flow/cut gap.

\begin{theorem}[\cite{OS81}]
\label{thm:OS}
Let $G=(V,E)$ be a planar graph, and let $F \subseteq V$ be any face of $G$.
Then for any capacities $\cp :E \to [0,\infty)$ and any demands $\dem : V \times V \to [0,\infty)$
supported on $F$, we have
$$
\mcf_G(\cp,\dem) = \Phi_G(\cp,\dem)\,.
$$
\end{theorem}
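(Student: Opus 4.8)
The plan is to prove the equivalent \emph{cut-condition} form and then rescale. Since $\mcf_G(\cp,\dem)\le\Phi_G(\cp,\dem)$ always (the bound recalled above), and since replacing $\dem$ by $\Phi_G(\cp,\dem)\cdot\dem$ multiplies $\mcf$ by $1/\Phi_G(\cp,\dem)$ and makes $\Phi_G=1$, it suffices to show: \emph{if the cut condition $\cp(S,\bar S)\ge\dem(S,\bar S)$ holds for every $S\subseteq V$, then all of $\dem$ can be routed simultaneously.} A standard compactness/rationality argument lets us take $\cp$ and $\dem$ rational and then, after clearing denominators, integral. We also perform planar normalizations: insert edges of capacity $0$ so that the face $F$ is bounded by a simple cycle $C$ and $G$ is connected, and delete every capacity-$0$ edge that is not needed to keep $C$ a cycle — deleting a capacity-$0$ edge changes no cut value, so the cut condition persists. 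Now every terminal lies on $C$.

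The main step is an induction on the total capacity $\sum_{e\in E}\cp(e)$, a nonnegative integer. If $\dem\equiv 0$ there is nothing to do. Call $S$ \emph{tight} if $\cp(S,\bar S)=\dem(S,\bar S)$. If no cut is tight, then (by integrality) every cut has slack at least $1$, so lowering the capacity of a single edge by $1$ preserves the cut condition; the total capacity drops, and a routing of $\dem$ produced by the inductive hypothesis in the smaller instance is also a routing in the original. If some cut is tight, the crux — and the only place the single-face hypothesis is used — is the claim that one may then choose a tight set $S$ with $\emptyset\ne S\ne V$ whose trace $S\cap C$ is a single \emph{arc} of the cycle $C$. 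This is proved by \emph{uncrossing}: submodularity of $S\mapsto\cp(S,\bar S)$, combined with a complementary modularity inequality for $S\mapsto\dem(S,\bar S)$ that holds precisely because all demand endpoints lie on $C$ (so the traces of sets on $C$, viewed as systems of arcs, can be uncrossed), lets one replace a tight set meeting $C$ in many arcs by a tight set meeting $C$ in fewer arcs, until only one arc remains. Given such an $S$, split the instance: contract $\bar S$ to one new terminal $v^\ast$, redirecting to $v^\ast$ every demand with an endpoint in $\bar S$, and do the symmetric thing on the other side. Because $S\cap C$ is an arc, each side is again a planar graph with all terminals on a single face; the cut condition survives the contraction on each side; each side has strictly smaller total capacity (both $S$ and $\bar S$ carry positive capacity); so by induction each side has a routing, and — using tightness of $S$ to guarantee the two boundary flows across $\partial S$ can be made to agree — the two routings glue to a routing of $\dem$ in $G$.

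I expect the uncrossing lemma to be the genuine obstacle: one must fix the right notion of ``crossing'' of subsets relative to the cyclic order on $C$, verify the exchange inequalities for $\cp$ and $\dem$ in that setting, and show that the structure of (minimal) tight cuts forces a single-arc tight cut — all while arranging that contraction keeps the recursion inside the class of planar instances with terminals on one face and that the boundary flows glue. A more ``metric'' route, closer to the methods of this paper, is to invoke LP duality to rewrite $\mcf_G(\cp,\dem)$ as the minimum over metrics $d$ on $V$ of the ratio of $\sum_{e\in E}\cp(e)\,d(e)$ to $\sum_{u,v}\dem(u,v)\,d(u,v)$, reduce to the case where the optimal $d$ is the shortest-path metric of $G$ under some edge lengths, and then exhibit a nonnegative combination $\rho=\sum_i\lambda_i\,\delta_{S_i}$ of cut metrics with $\rho(e)\le d(e)$ for every edge $e$ and $\rho(u,v)=d(u,v)$ for every pair $u,v\in F$; averaging $\cp(S_i,\bar S_i)/\dem(S_i,\bar S_i)$ against the weights $\lambda_i$ then produces a cut attaining the LP value, so $\Phi_G(\cp,\dem)\le\mcf_G(\cp,\dem)$. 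Producing this ``graph-realizable isometric $\ell_1$-embedding of the face metric'' is the exact analogue of the uncrossing step, and it would again be the crux.
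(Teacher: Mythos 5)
The paper does not prove Theorem~\ref{thm:OS} at all --- it is quoted from \cite{OS81} --- so your proposal has to be measured against the classical Okamura--Seymour argument. Your reductions (scaling so that the cut condition is the hypothesis, rational/integral normalization, induction on $\sum_e \cp(e)$, and the no-tight-cut case) are fine and match the classical scheme in spirit. But both pivotal steps are only asserted, and the justifications you sketch do not go through as stated. For the single-arc tight cut: the demand cut function $S\mapsto \dem(S,\bar S)$ is \emph{sub}modular, and the ``complementary modularity because all terminals lie on $C$'' you invoke is false in general. Take $C$ a $4$-cycle $1,2,3,4$ with unit demands $\{1,3\}$ and $\{2,4\}$, $X=\{1,2\}$, $Y=\{2,3\}$: there are demands both between $X\setminus Y$ and $Y\setminus X$ and between $X\cap Y$ and the complement of $X\cup Y$, so neither uncrossing move is forced by tightness. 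The lemma you want is true, but the clean route is different: if $X$ is tight, every connected component of $G[X]$ (and of $G[\bar X]$) induces a tight cut, so one gets a tight cut with both sides connected; such a cut is a simple cycle in the planar dual, hence uses exactly two edges of the outer cycle, hence has arc trace.

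The more serious gap is the gluing step. Tightness of $\delta(S)$ only forces the \emph{aggregate} flow on each cut edge to equal its capacity in both contracted sub-instances; to splice the two routings you must match flows per commodity, i.e.\ find nonnegative $\mu_e(i,j)$ whose three two-dimensional marginals are (a) the per-edge source pattern of the first routing, (b) the per-edge sink pattern of the second, and (c) the demand matrix $\dem(s_i,t_j)$. Three-way transportation problems with prescribed two-way marginals can be infeasible even when the marginals are consistent, so ``tightness guarantees the boundary flows can be made to agree'' is not a proof; and the natural repair --- refining the demands of one side by ``ports'' on $\delta(S)$ --- destroys the guarantee that the refined sub-instance still satisfies the cut condition. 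This is exactly why the classical proof does not divide and conquer along a tight cut: it keeps the same graph, lowers the capacity of one edge $\{u,v\}$ of the outer cycle by one, and modifies a single carefully chosen demand (the choice being guided by an extremal tight cut) so that the cut condition is preserved, then recurses on total capacity. Your alternative LP/embedding route has the same status: the ``graph-realizable isometric cut representation of the face metric'' you would need is essentially equivalent to the theorem itself, so as written the proposal is a plausible plan with its two cruxes unproved, one of them (gluing) resting on a claim that is not correct as stated.
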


We remark that this theorem has applications beyond flow/cut gaps.
For instance, in \cite{CKS09} the authors use it as a fundamental
step in solving the edge-disjoint paths problem in planar graphs
with constant congestion.
A significant motivation of the present paper is to
serve as a step in extending their work
to the more difficult vertex-disjoint paths problem.

Indeed, one can consider generalizations of edge-capacitated networks.  A prominent example
is to consider capacities on vertices (one motivation is that this
a more relevant type of constraint in wireless networks \cite{CKRV12}).
While one can simulate edge capacities by
introducing a new vertex in the middle of an edge, it does not seem that
any reduction is known by which one can simulate vertex capacities with edge capacities.

\medskip
\noindent
{\bf Vertex-capacitated flows.}
Formally, we define a {\em vertex-capacitated flow network} by considering
a function $\cp~:~V~\to~[0,\infty)$ assigning capacities to vertices
instead of edges.  It seems that the most elegant way to think about
capacities in this setting is as follows:  If a flow of value $\alpha$ is sent along a path $P$ from $s$ to $t$,
then it consumes $\alpha/2$ capacity at $s$ and $t$ and $\alpha$ capacity at each of the intermediate nodes of $P$.\footnote{This
particular choice does not materially affect any theorem in the paper which deals with approximate flow/cut gaps.}
Formally, in the multi-commodity setting, the vertex capacity constrains require that for every $w\in V$,
$\sum_{e \ni w} \sum_{u,v \in V} \flow_{uv}(e)\le 2\,\cp(w).$
The corresponding definition of the maximum concurrent flow follows immediately; we use the notation $\mcf_G^v$
for the vertex-capacitated version.  For the definition of $\Phi_G^v$, we have to be slightly more careful.
For a subset $S\subseteq V$ of the vertices, denote by $G[S]$ the induces subgraph of $G$ on $S$.
We define a function $\rho_S : V \times V \to \{0,\frac12,1\}$ by
$$
\rho_S(u,v) = \begin{cases}
\frac12 & |\{u,v\} \cap S| = 1 \\
1 & u,v \in S \\
1 & u,v \in \bar S \textrm{ and $u,v$ are in distinct connected components of $G[\bar S]$} \\
0 & \textrm{otherwise.}
\end{cases}
$$
In other words, we are only given half-credit for separating $u$ and $v$ if exactly
one of them is in the separator.
Then we define $$\Phi_G^v(S; \cp, \dem) = \frac{\sum_{v \in S} \cp(v)}{\sum_{u,v \in V} \dem(u,v) \rho_S(u,v)}\,,$$
and $\Phi_G^v(\cp,\dem) = \min_{S \subseteq V} \Phi_G^v(S; \cp,\dem)$.
It is straightforward to verify that $\mcf_G^v(\cp,\dem) \leq \Phi_G^v(\cp,\dem)$.

\begin{figure}
\begin{center}
\includegraphics[width=3.8in]{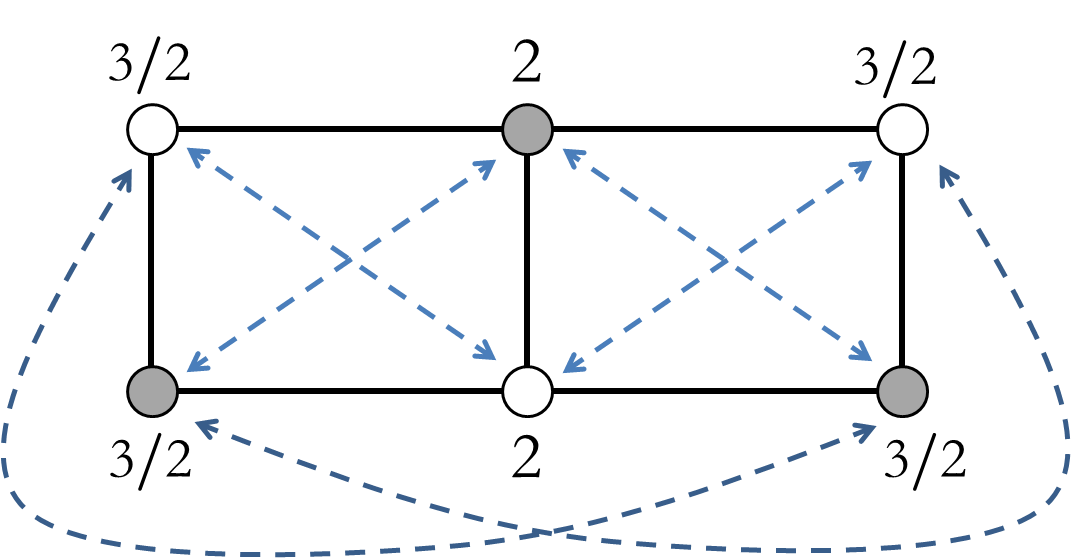}
\caption{A counterexample to an exact node-capacitated Okamura-Seymour Theorem.\label{fig:ce}}
\end{center}
\end{figure}

These precise definitions ensure that a classical Max-flow Min-cut theorem holds when the demand is supported
on a single pair (this follows from Menger's theorem \cite{Menger27}).
They also allow other natural properties in the multi-commodity setting; it is an exercise to show that
for any tree $T$, we have $\mcf^v_T(\cp,\dem)=\Phi_T^v(\cp,\dem)$
for any choice of capacities and demands.  In some sense,
this property will be crucial for our approach later.

Unfortunately, there is no exact
vertex-capacitated analog of the Okamura-Seymour Theorem.  The planar graph in Figure \ref{fig:ce}
has all vertices on the outer face.  The capacities are specified on the vertices
and the demands are given by dotted edges in the figure; all demands have value 1.
It is straightforward to check that one has $\Phi_G^v(\cp,\dem)=1$
and yet $\mcf_G^v(\cp,\dem) = 5/7$.  The shaded nodes form a vertex cut of sparsity 1.

Nevertheless, a main result of the present paper is that an approximate
version does hold in the vertex-capacitated setting,
answering a question posed by Chekuri and Kawarabayashi.

\begin{theorem}[Approximate Okamura-Seymour Theorem]
\label{thm:nodeoks}
There exists a constant $\e > 0$ such that the following holds.
Let $G=(V,E)$ be a planar graph and let $F \subseteq V$ be any face of $G$.
Then for any vertex capacities $\cp : V \to [0,\infty)$ and any demands $\dem : V \times V \to [0,\infty)$
supported on $F$, we have
$$
\mcf^v_G(\cp,\dem) \geq \e \cdot \Phi^v_G(\cp,\dem)\,.
$$
\end{theorem}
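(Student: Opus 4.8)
The plan is to prove the theorem via LP duality combined with a randomized rounding procedure based on metric embeddings. First I would write down the concurrent flow problem $\mcf^v_G(\cp,\dem)$ as a linear program and pass to its dual. As is standard for maximum concurrent flow, the dual assigns a nonnegative ``length'' to each vertex $w$, say $\ell(w)$, and the dual constraint says that for each demand pair $(u,v)$ the $\ell$-shortest $u$–$v$ path through the graph (where the length of a path is the sum of $\ell$ over its internal vertices, plus half the endpoints, matching the capacity convention) has length at least $\dem(u,v)$, while the objective is to minimize $\sum_w \cp(w)\ell(w)$. Thus $\mcf^v_G(\cp,\dem)$ equals the minimum, over all such vertex weightings, of the ratio $\sum_w \cp(w)\ell(w) / \sum_{u,v}\dem(u,v)\,d_\ell(u,v)$, where $d_\ell$ is the induced shortest-path (semi)metric on $V$. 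So it suffices to show: for any vertex weighting $\ell$, there is a vertex cut $S$ with $\Phi^v_G(S;\cp,\dem) \le C \cdot \big(\sum_w \cp(w)\ell(w)\big)/\big(\sum_{u,v}\dem(u,v)\,d_\ell(u,v)\big)$ for a universal constant $C$; equivalently, the ``node-capacitated sparsest cut'' is within a constant of its natural LP/metric relaxation when terminals lie on one face of a planar graph.

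The heart of the argument is therefore a rounding statement: given the metric $d_\ell$ on $F$ arising from vertex lengths in a planar graph with $F$ on the outer face, produce a distribution over vertex subsets $S$ such that (a) the expected capacity $\E\sum_{v\in S}\cp(v)$ is at most $O(1)\cdot \sum_w \cp(w)\ell(w)$, and (b) for each pair $u,v\in F$, $\E[\rho_S(u,v)] \gtrsim d_\ell(u,v)/\diam$, suitably normalized — i.e.\ the separation probability is proportional to the metric distance. The key step, and the reason the single-face hypothesis is essential, is to build a \emph{random metric embedding} of the terminal metric $(F,d_\ell)$ that respects the planar/circular structure: since $F$ is a face, the terminals sit in a cyclic order around the outer boundary, and one can hope to find a random partition into intervals (arcs) of the boundary cycle whose ``boundary'' in $G$ — the set of vertices one must delete to separate consecutive arcs — has small total $\ell$-weighted cost, by a charging/region-growing argument exploiting planarity (each separating set corresponds to a path across the disk, and such paths can be chosen nearly disjoint with total length controlled by $\diam$). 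This is where the new type of random embedding announced in the abstract comes in: rather than embedding into $\ell_1$ in the usual way, one needs an embedding/partitioning scheme whose cost is measured against \emph{vertex} deletions and which is compatible with the tree-like behavior of node cuts (recall the remark that $\mcf^v_T = \Phi^v_T$ exactly on trees). I would try to reduce the face metric to a random tree / random ultrametric over the boundary cycle, route the demands integrally on that tree (where flow equals cut), and pull back.

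I expect the main obstacle to be exactly this rounding step: controlling the \emph{vertex} cost of the random partition. In the edge-capacitated Okamura–Seymour setting, one can cut edges crossing the boundary of an arc and pay only the edges actually severed; in the vertex setting, deleting a vertex to separate one pair may gratuitously separate many other pairs (this is precisely what kills the \emph{exact} theorem, cf.\ Figure~\ref{fig:ce}), so a naive region-growing bound overcounts. The fix will require choosing the random cuts so that the family of separating vertex sets is (nearly) laminar or has bounded overlap — so that each vertex is charged $O(1)$ times in expectation — which should be arrangeable because paths across a disk can be nested, and because the $\rho_S$ function only demands half credit when a terminal itself lands in the separator, giving slack. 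A secondary technical point is handling the ``distinct connected components of $G[\bar S]$'' clause in the definition of $\rho_S$ (so that a separator genuinely disconnects, not merely removes, a terminal), which I would address by taking $S$ to be a union of boundary-to-boundary separating paths, guaranteeing that the two sides it induces are genuinely disconnected in $G[\bar S]$. Putting the flow LP duality together with this constant-factor rounding yields $\mcf^v_G \ge \e\,\Phi^v_G$ for a universal $\e>0$, as claimed.
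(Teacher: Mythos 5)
Your reduction via LP duality is sound and matches the paper's first step (the paper uses the polymatroid duality of Chekuri et al., which specializes to exactly the vertex-length formulation you describe), and you have correctly located the crux: a constant-factor rounding of the metric relaxation in which each vertex is charged only $O(1)$ times its length. But the proposal stops precisely where the proof has to begin, and the mechanisms you gesture at do not survive the obstructions the problem presents. Your intermediate goal --- a distribution over vertex cuts with per-pair separation probability proportional to $d_\ell(u,v)$ and expected cost $O(1)\sum_w \cp(w)\ell(w)$, produced from random arcs of the boundary cycle or a random ultrametric on it --- is, in the form you would naturally construct it (random $1$-Lipschitz maps to a line or cycle followed by threshold/arc cuts), the $\ell_1^{\mathrm{dom}}$ approach of Feige--Hajiaghayi--Lee. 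That approach provably cannot give a universal constant here: trees are themselves instances of the theorem (outerplanar, all vertices on one face, flow/cut gap exactly $1$), yet $c_1^{\mathrm{dom}}$ of trees is $\Omega(\sqrt{\log\log n})$ by Bourgain's lower bound. So "separation probability proportional to distance via boundary arcs" cannot be the engine; some genuinely richer target space is needed, and your remark that laminarity/bounded overlap of crossing paths "should be arrangeable" is exactly the unproved heart of the matter, not a fix.

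What the paper actually supplies, and what is absent from your plan, is threefold. First, the target is enlarged from the line to \emph{random trees}: one seeks a random $1$-Lipschitz map into a random metric tree that does not contract pairs on the face by more than a constant in expectation (trivially achievable for trees, which is why the $\ell_1^{\mathrm{dom}}$ barrier disappears). Second, to round a tree embedding to vertex cuts with $O(1)$ loss one must \emph{coordinate} the images of all edges incident to a vertex --- otherwise a single vertex is charged once per incident edge when a tree edge is cut; the paper formalizes this via star-shaped (then randomly $4$-thin) mappings and single-scale $\ell_\infty$ gradient bounds, and the rounding theorem needs a nontrivial multi-scale charging argument because the global gradient is \emph{not} controlled. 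Third, your plan works directly with "paths across the disk" but gives no way to bound their cost against $\sum_w \cp(w)\ell(w)$, which lives on all of $V$, not just the face; the paper handles this by a random \emph{connected} retraction of $G$ onto the face built from KPR padded partitions (following Englert et al.), which quotients $G$ to an outerplanar graph while stretching each edge $O(1)$ per scale in expectation, and only then applies a heavily modified Charikar--Sahai construction to embed the outerplanar quotient into random trees in a star-shaped, $1$-Lipschitz way. Without these ingredients (or substitutes for them), the charging step in your sketch --- the one you yourself flag as the main obstacle --- remains unproved, so the proposal as it stands has a genuine gap at its central step.
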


In fact, our result holds in the more general setting of undirected polymatroid networks which we discuss next.

\subsection{Polymatroid networks}

Motivated by applications to information flow in wireless networks, Chekuri et.~al.~\cite{CKRV12}
introduced a generalization of vertex capacities by putting a submodular capacity function
at every vertex.
Recall that a function
$f : 2^S \to \mathbb R$ over a finite set $S$ is called {\em submodular}
if $f(A) + f(B) \geq f(A \cap B) + f(A \cup B)$ for all $A,B \subseteq S$.
 Let $G=(V,E)$ be a graph and suppose that for a multi-commodity flow $\flow=\{\flow_{st}\}_{s,t\in V}$
in $G$, we use $\varphi(e)=\sum_{s,t} \varphi_{st}(e)$ to denote the total flow through the edge $e$.
For a vertex $v \in V$, we use $E(v)$ to denote the edges incident to $v$.
Let $\vec \rho = \{\rho_v : 2^{E(v)} \to [0,\infty)\}_{v \in V}$
be a collection of monotone, submodular
 functions called {\em polymatroid capacities.}
A flow $\varphi$ is {\em feasible with respect to $\vec \rho$} if
for every $v \in V$ and every subset $S \subseteq E(v)$, we have
$\sum_{e \in S} \varphi(e) \leq \rho_v(S)$.

Given a demand function $\dem : V \times V \to [0,\infty)$, we can define the {\em maximum concurrent flow
value} of a polymatroid network by $\mcf_G(\vec \rho, \dem)$ as the maximum $\e > 0$
such that one can route an $\e$-fraction of all demands simultaneously using
a flow that is feasible with respect to $\vec \rho$.

The corresponding notion
of a sparse cut is now a little trickier.
For every subset of edges $S \subseteq E$, we can define the cut semi-metric $\sigma_S : V \times V \to \{0,1\}$ on $V$ by $\sigma_S(x,y) = 0$
if and only if there exists a path from $x$ to $y$ in the graph $G(V, E \setminus S)$.
Following \cite{CKRV12}, we call a map $g : S \to V$ {\em valid} if it maps every
edge in $S$ to one of its two endpoints in $V$.  We can then define the {\em capacity of a set $S \subseteq E$} by
$$
\nu_{\vec \rho}(S) = \min_{\substack{g : S \to V \\ \textrm{valid}}} \sum_{v \in V} \rho_v(g^{-1}(v))\,.
$$
Finally, we define the {\em sparsity of $S$} by
$$
\Phi_G(S; \vec \rho, \dem) = \frac{\nu_{\vec \rho}(S)}{\sum_{u,v \in V} \dem(u,v) \sigma_S(u,v)}\,,
$$
and define $\Phi_G(\vec \rho,\dem) = \min_{S \subseteq V} \Phi_G(S;\vec \rho,\dem).$
It is not too difficult to see that, again,
$$
\mcf_G(\vec \rho, \dem) \leq \Phi_G(\vec \rho,\dem)\,.
$$
In \cite{CKRV12}, it is proved that when $\dem$ is supported on a single pair, we have
$$
\Phi_G(\vec \rho,\dem) \leq 2 \cdot \mcf_G(\vec \rho, \dem)\,.
$$

Unfortunately, the factor 2 is necessary, and owes itself to a slight
defect in the notion of undirected polymatroid networks.  If one were to
say that a flow only consumes half the capacity of an edge
if it originates at an endpoint (as in the vertex-capacitated case
described above), then we would obtain an exact single-commodity max-flow/min-cut theorem
in this setting.  Indeed, for directed polymatroid networks,
such a result is classical \cite{Hassin78,LM82}.
Since we are concerned here with approximate flow/cut gaps, this
will not be an issue, and we follow \cite{CKRV12}.
We obtain an Okamura-Seymour theorem for polymatroid networks as well, answering a question
posed to us by Chandra Chekuri.

\begin{theorem}[Polymatroid Okamura-Seymour Theorem]
\label{thm:polyoks}
There exists a constant $\e > 0$ such that the following holds.
Let $G=(V,E)$ be a planar graph and let $F \subseteq V$ be any face of $G$.
Then for any polymatroid capacities $\vec \rho$ and any demands $\dem : V \times V \to [0,\infty)$
supported on $F$, we have
$$
\mcf_G(\vec \rho,\dem) \geq \e \cdot \Phi_G(\vec \rho,\dem)\,.
$$
\end{theorem}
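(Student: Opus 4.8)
I would first write the maximum concurrent flow of a polymatroid network as a (large) linear program whose variables are path-flows for every commodity, whose objective is the common throughput $\e$, and whose constraints are the polymatroid capacity inequalities $\sum_{e\in S}\flow(e)\le\rho_v(S)$ for every $v\in V$ and $S\subseteq E(v)$, together with the demand constraints. Taking the LP dual, and using that the inner minimization at each vertex is (again by LP duality, and the definition of a polymatroid) the support function $\hat\rho_v(a)=\max\{\langle a,y\rangle:y\in\R^{E(v)}_{\ge0},\ y(S)\le\rho_v(S)\ \forall S\subseteq E(v)\}$ of $\rho_v$, one obtains, up to a factor of $2$ that can be absorbed into $\e$ (as in \cite{CKRV12}),
\[
\mcf_G(\vec\rho,\dem)\ =\ \min\Big\{\ \sum_{v\in V}\hat\rho_v\big(\ell|_{E(v)}\big)\ :\ \ell:E\to[0,\infty),\ \ \sum_{u,v}\dem(u,v)\,\dist_\ell(u,v)\ \ge\ 1\Big\},
\]
where $\dist_\ell$ is the shortest-path pseudometric induced by edge lengths $\ell$. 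Fix an optimal $\ell$ and set $\mathrm{cost}(\ell)=\sum_v\hat\rho_v(\ell|_{E(v)})=\mcf_G(\vec\rho,\dem)$; the degenerate case $\mathrm{cost}(\ell)=0$ is handled directly.

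\textbf{Step 2: Reduction to a random cut.} Since $\mcf_G(\vec\rho,\dem)\le\Phi_G(\vec\rho,\dem)$ always, it suffices to produce one cut $S$ with $\Phi_G(S;\vec\rho,\dem)\le (C/c)\,\mathrm{cost}(\ell)$ for absolute constants $C,c>0$, and I would get this by averaging. Concretely, I want a random edge set $S\subseteq E$ together with a random valid assignment $g:S\to V$ such that
\[
\E\Big[\sum_{v\in V}\rho_v\big(g^{-1}(v)\big)\Big]\ \le\ C\cdot\mathrm{cost}(\ell)
\qquad\text{and}\qquad
\E\big[\sigma_S(u,v)\big]\ \ge\ c\cdot\dist_\ell(u,v)\quad\text{for all }u,v\in F .
\]
The first bound dominates $\E[\nu_{\vec\rho}(S)]$; multiplying the second by $\dem(u,v)$, summing, and using the normalization $\sum_{u,v}\dem(u,v)\dist_\ell(u,v)\ge1$ gives $\E[\sum_{u,v}\dem(u,v)\sigma_S(u,v)]\ge c$. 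Since $\mathrm{cost}(\ell)=\mcf_G(\vec\rho,\dem)$, considering $\nu_{\vec\rho}(S)-\tfrac{C}{c}\mathrm{cost}(\ell)\sum_{u,v}\dem(u,v)\sigma_S(u,v)$ (which is $\le\sum_v\rho_v(g^{-1}(v))-\tfrac{C}{c}\mathrm{cost}(\ell)\sum_{u,v}\dem(u,v)\sigma_S(u,v)$, of nonpositive mean) yields a deterministic $S$ with $\nu_{\vec\rho}(S)\le(C/c)\,\mathrm{cost}(\ell)\sum_{u,v}\dem(u,v)\sigma_S(u,v)$, i.e. Theorem~\ref{thm:polyoks} with $\e=c/C$.

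\textbf{Step 3: The embedding.} The random cut comes from pulling a random interval of the terminal cycle into $G$ and thickening it. Draw $G$ in the disk with $F$ as the outer boundary, so the terminals sit in cyclic order on $\partial$. One picks a random sub-arc $I$ of $\partial$ together with a random $\ell$-radius $t\in[0,R]$ (coupled so that varying $t$ performs a threshold sweep), and lets $S$ be the edges crossing the boundary of $\{v:\dist_\ell(v,I)\le t\}$; in a multiscale/recursive version one applies this on the planar pieces that remain, each of which again has its terminals on a single face. A cut edge $e=(w,x)$ with $w$ inside and $x$ outside is assigned by $g$ to its outer endpoint $x$. Two features make this work. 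First, with $x$ fixed, $g^{-1}(x)$ consists of edges $(x,w)$ with $\dist_\ell(w,I)\le t<\dist_\ell(x,I)$; since $\dist_\ell(w,I)\ge\dist_\ell(x,I)-\ell(e)$, this set lies inside the threshold set $\{e\in E(x):\ell(e)\ge\dist_\ell(x,I)-t\}$, so by monotonicity of $\rho_x$ and the identity $\hat\rho_x(a)=\int_0^\infty\rho_x(\{e:a_e\ge\lambda\})\,d\lambda$, averaging over $t\sim[0,R]$ gives $\E[\rho_x(g^{-1}(x))]\le\hat\rho_x(\ell|_{E(x)})/R$, and summing over $x$ gives the first bound of Step~2 with $C=O(1)$. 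Second, because all terminals lie on a single face — precisely where the Okamura--Seymour hypothesis enters, exactly as in the exact theorem, where the boundary metric is a nonnegative combination of ``boundary-interval'' cut metrics — a random interval-thickening separates a given pair $u,v$ with probability $\gtrsim\dist_\ell(u,v)/R$, giving the second bound with $c=\Omega(1)$. A piece carrying only $O(1)$ terminals is finished off using the exact flow/cut equalities for small-support demands and for trees recorded after Theorem~\ref{thm:OS}.

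\textbf{Main obstacle.} The crux is Step~3 — achieving both bounds of Step~2 \emph{simultaneously}. Off-the-shelf tools fail here: the classical $\ell_1$ embedding of a boundary metric only controls distances between terminals and says nothing about how an ``interval cut'' behaves in the interior of $G$, so such a cut may sever, at some high-degree vertex $v$, a set of edges whose $\rho_v$-value vastly exceeds $\hat\rho_v(\ell|_{E(v)})$ (submodularity makes the per-vertex budget sharply nonlinear), while a charging rule tuned to keep every vertex inside its polymatroid budget can destroy the constant-distortion separation on the boundary. Designing the random interval, the thickening, and the assignment $g$ so that the cut-edges at every vertex always form a \emph{threshold set} of that vertex's length vector — making the Lov\'asz-extension accounting tight — while boundary distances are reproduced up to a constant factor in expectation is the ``new type of random metric embedding'' promised in the abstract; granted it, Steps~1--2 assemble Theorem~\ref{thm:polyoks}, and Theorem~\ref{thm:nodeoks} follows since ordinary vertex capacities $\rho_v(S)=\cp(v)\cdot\mathbf{1}_{\{S\ne\emptyset\}}$ are monotone and submodular.
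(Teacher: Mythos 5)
Your Step~2 averaging is fine, but the proposal has two genuine gaps, and they interact. First, the duality in Step~1 is not correct "up to a factor of 2.'' The LP dual of the polymatroid concurrent-flow problem does not charge each vertex the Lov\'asz extension of the \emph{full} edge lengths $\len|_{E(v)}$; it charges $\hat\rho_v(\ell_v)$ for a family $\{\ell_v\}$ that \emph{splits} each edge length between its two endpoints ($\len(e)\le \ell_u(e)+\ell_v(e)$), exactly as in the paper's Duality Theorem. Your simplified form can overstate $\mcf$ by polynomial factors: take a star with center $c$, $\rho_c(S)=\mathbf 1[S\neq\emptyset]$, leaves $x_1,\dots,x_n$ with $\rho_{x_i}(S)=n\cdot\mathbf 1[S\neq\emptyset]$, and unit demand between every pair of leaves. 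Here $\mcf=\Theta(1/n^2)$, and the adapted dual attains $\Theta(1/n^2)$ by charging all length at the center; but in your form every edge length is also charged at its leaf with weight $n$, so the minimum of $\sum_v\hat\rho_v(\len|_{E(v)})$ subject to the demand-distance normalization is $\Omega(1)$. So "$\mathrm{cost}(\ell)=\mcf$'' fails, and the whole calibration of Step~2 collapses.

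Second, and more fundamentally, once you use the correct adapted dual, the accounting in Step~3 no longer closes, and this is precisely the coordination difficulty the paper is built to overcome. Your threshold-set bound rests on $\ell(e)\ge \dist_\ell(x,I)-t$, which controls the \emph{total} edge length; with the correct dual you must bound $\rho_x(g^{-1}(x))$ against $\hat\rho_x(\ell_x)$, i.e.\ against the endpoint's \emph{share} $\ell_x(e)$, which the ball-cut inequality says nothing about (an edge can be long because the other endpoint carries all its length). You acknowledge this yourself: the paragraph "granted it, Steps~1--2 assemble Theorem~\ref{thm:polyoks}'' defers exactly the step that constitutes the theorem, and the $\Omega(\dist_\ell(u,v)/R)$ separation probability for boundary pairs under a single-scale interval-thickening is also asserted, not proved (it is false at a single scale; some multiscale scheme is needed, and none is specified). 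The paper resolves the endpoint-share problem differently: it rounds from random \emph{star-shaped tree embeddings} (Lemma~\ref{lem:deground} uses the adaptedness condition $d_T(f(u),f(v))\le\ell_u(e)+\ell_v(e)$ to decide, along the tree path, which endpoint pays for each tree edge, so each vertex's charge really is a threshold set of its own $\ell_v$), and it constructs such embeddings via a Charikar--Sahai-style induction for outerplanar graphs plus a padded-partition (KPR) connected retraction onto the face, with single-scale gradient control handling the multiscale issue inside the rounding theorem. As it stands, your argument would need both a corrected dual and a genuinely new construction for Step~3 before it yields the theorem.
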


Theorem~\ref{thm:nodeoks} is a special case of Theorem~\ref{thm:polyoks}.
Indeed, vertex capacity $\cp:V\to [0,\infty)$, is (up to a factor of 2) equivalent to vertex polymatroid capacity
\( \vec \rho_v(\emptyset)=0 \) and $\vec\rho_v(S)=\cp(v)$ for $\emptyset\ne S\subseteq E(v)$. With this definition of $\vec \rho$, it is immediate to check that
$\mcf_G(\vec \rho,\dem) \le \mcf_G^v(\cp,\dem) \le 2 \mcf_G(\vec \rho,\dem)$
and
$\Phi_G(\vec \rho,\dem) \le \Phi_G^v(\cp,\dem) \le 2 \Phi_G(\vec \rho,\dem)$.

\subsection{Embeddings and flow/cut gaps}

Our main tools in proving Theorems \ref{thm:nodeoks} and \ref{thm:polyoks}
are various embeddings of metric spaces.  To this end, we first recall
known results in the edge and vertex-capacitated settings.
In the next section, we discuss the new types of embeddings
we need to handle vertex-capacitated and polymatroid networks.

A {\em metric graph} $G=(V,E,\len)$ is an undirected
graph equipped with a non-negative length function on edges $\len : E \to [0,\infty)$.
We extend the length function to paths $P \subseteq E$ by setting $\len(P) = \sum_{e \in P} \len(e)$.
Associated to every such length is the shortest-path pseudo-metric on $G$
defined by $d_{\len}(u,v) = \min_P \len(P)$ where the minimum is
over all $u$-$v$ paths $P$ in $G$.  We say that a pseudo-metric $d$ on $V$ is {\em supported
on the graph $G$} if $d=d_{\len}$ for some length function on $E$.
In many situations we will only be considering a single length function on $G$ at a time,
and then we write $d_G$ instead of $d_{\len}$.

We will consider embeddings of such graph metrics into various other spaces.
Given two metric spaces $(X,d_X)$ and $(Y,d_Y)$ and a function $f : X \to Y$,
we define the {\em Lipschitz constant of $f$} by
$$
\|f\|_{\Lip} = \sup_{x \neq y \in X} \frac{d_Y(f(x),f(y))}{d_X(x,y)}\,.
$$
If $\|f\|_{\Lip} \leq L$, we say that $f$ is {\em $L$-Lipschitz.}

We define the {\em distortion} of the map $f$ by $\dist(f) = \|f\|_{\Lip} \cdot \|f^{-1}\|_{\Lip}$.
The {\em $L_1$ distortion of a metric space $(X,d_X)$,} written $c_1(X,d_X)$,
denotes the infimum of $\dist(f)$ over all maps $f : X \to L_1$.
The next theorem gives a tight relationship between flow/cut gaps in graphs
and $L_1$ embeddings of the metric supported on them.
It follows from \cite{LLR95} and \cite{GNRS04}.

\begin{theorem}
Consider any graph $G=(V,E)$ and any subset $D \subseteq V$.
Let
$$
K_1(G,D) = \sup_{\cp,\dem} \frac{\Phi_G(\cp,\dem)}{\mcf_G(\cp,\dem)}\,,
$$
where the supremum is over all capacity functions $\cp : E \to [0,\infty)$ and
all demand functions supported on $D$.  Let
$$
K_2(G,D) = \sup_{d} \left[\inf_{f : (V,d) \to L_1} \dist(f|_D)\right],
$$
where the supremum is over all metrics $d$ supported on $G$
and the infimum is over all 1-Lipschitz mappings $f : V \to L_1$.
Then $K_1(G,D) = K_2(G,D)$.
\end{theorem}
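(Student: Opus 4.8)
The plan is to identify both $K_1(G,D)$ and $K_2(G,D)$ with the same extremal quantity coming from linear-programming duality: the worst ratio, measured against demands supported on $D$, between the cone $\mathrm{CUT}(V)$ of $L_1$-embeddable semimetrics on $V$ and the cone of shortest-path metrics supported on $G$. This is the argument of \cite{LLR95}, made to work for proper demand sets as in \cite{GNRS04}. Write $\langle\dem,\nu\rangle:=\sum_{u,v}\dem(u,v)\nu(u,v)$ and $\langle\cp,\nu\rangle:=\sum_{e\in E}\cp(e)\nu(u_e,v_e)$ for a semimetric $\nu$ on $V$. I would first record two dualities: the LP dual of the concurrent flow gives $\mcf_G(\cp,\dem)=\min\{\langle\cp,\len\rangle : \len:E\to[0,\infty),\ \langle\dem,d_{\len}\rangle\ge 1\}$, and since replacing $\len$ by the metric it induces only lowers the objective while preserving the constraint, the minimum is attained by some $\len(e)=d(u_e,v_e)$ with $d$ supported on $G$; and since every $\mu\in\mathrm{CUT}(V)$ is a nonnegative combination of cut semimetrics, the mediant inequality gives $\Phi_G(\cp,\dem)=\min_{\mu\in\mathrm{CUT}(V)}\langle\cp,\mu\rangle/\langle\dem,\mu\rangle$.

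I would also reformulate $K_2$. For a metric $d$ supported on $G$ one has $\inf_f\dist(f|_D)=c_1(D,d|_D)$, the infimum over $1$-Lipschitz $f:(V,d)\to L_1$: ``$\ge$'' is trivial, and for ``$\le$'' one extends each coordinate of a near-optimal finite-dimensional $L_1$-embedding of $(D,d|_D)$ to $V$ by a McShane extension and rescales by the (finite) number of coordinates, obtaining a $1$-Lipschitz map whose restriction to $D$ is a scaling of the original embedding. Hence $K_2(G,D)=\sup_d c_1(D,d|_D)$ over metrics $d$ supported on $G$. The inequality $K_1(G,D)\ge K_2(G,D)$ then follows by construction: given a metric $d$ supported on $G$, the standard LP duality for $L_1$-distortion supplies demand/capacity functions $(\dem',\cp')$ on the complete graph on $D$ with $\dem'(A,D\setminus A)\le\cp'(A,D\setminus A)$ for all $A\subseteq D$ and $\langle\dem',d\rangle/\langle\cp',d\rangle$ as close as desired to $c_1(D,d|_D)$. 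Build the $G$-instance with demands $\dem'$ and, for each edge $e$, capacity equal to the total of all $\cp'(x,y)$ routed along a fixed $d$-shortest $x$-$y$ path through $e$. Every $G$-cut $S\subseteq V$ then has capacity at least $\cp'(S\cap D,D\setminus S)\ge\dem'(S\cap D,D\setminus S)$, so $\Phi_G\ge 1$, while feeding $\len(e)=d(u_e,v_e)$ into the flow LP gives $\mcf_G\le\langle\cp',d\rangle/\langle\dem',d\rangle$ (shortest paths have $d$-length equal to the endpoint distance). The ratio is $\ge c_1(D,d|_D)$; taking the supremum over $d$ yields $K_1\ge K_2$.

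The substantive direction $K_1(G,D)\le K_2(G,D)$ I would reduce to the following lemma: for every metric $d$ supported on $G$ and every $\dem$ supported on $D$ there is $\mu\in\mathrm{CUT}(V)$ with $\mu\le d$ pointwise and $\langle\dem,\mu\rangle\ge\langle\dem,d\rangle/c_1(D,d|_D)$. Granting it, apply the lemma with $d=d^{*}$ an optimal dual metric of a fixed instance (so $\mcf_G=\langle\cp,d^{*}\rangle/\langle\dem,d^{*}\rangle$); since $\mu\le d^{*}$ in particular on edges, $\Phi_G\le\langle\cp,\mu\rangle/\langle\dem,\mu\rangle\le c_1(D,d^{*}|_D)\cdot\langle\cp,d^{*}\rangle/\langle\dem,d^{*}\rangle=c_1(D,d^{*}|_D)\cdot\mcf_G\le K_2(G,D)\cdot\mcf_G$, which is the claim. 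To prove the lemma I would pass to its LP dual, which asks for a capacity function on the complete graph on $D$ whose cut function dominates that of $\dem$ and whose $d$-weighted cost is small; here the facts that $d$ is the shortest-path metric of $G$ and that (after normalizing $\Phi_G=1$) the reduced cut function $A\mapsto\min\{\cp(S,\bar S):S\cap D=A\}$ of $G$ dominates $\dem$ (by Menger's theorem) let one build such a capacity from a Gomory--Hu tree of $(G,\cp)$ with terminal set $D$, whose defining cuts form a laminar family — and it is laminarity that keeps the length-weighted cost bounded by $\langle\cp,d\rangle$. This lemma is precisely the point at which \cite{LLR95} and \cite{GNRS04} are combined.

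I expect this lemma to be the main obstacle. The tempting shortcut — take a near-optimal $L_1$-embedding of $(D,d|_D)$ and extend it to a non-expansive embedding of all of $(V,d)$ — does not work, since $L_1$ has no isometric (indeed no bounded) extension property, and a coordinatewise McShane extension inflates distances by a factor equal to the dimension, which is fatal here. One is forced to argue entirely on the $D$-side via LP duality, and the structural fact that rescues the estimate is that the relevant cut function is that of a graph, so that its Gomory--Hu tree supplies a laminar and hence length-efficient family of cuts; this is what pins the distortion loss down to exactly $c_1(D,d|_D)$ rather than something larger. Reproducing this LP-duality-plus-Gomory--Hu argument carefully, as opposed to the transparent flow/embedding construction used for the reverse inequality, is the delicate part.
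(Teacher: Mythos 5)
The paper never proves this statement --- it is quoted as background and attributed to \cite{LLR95} and \cite{GNRS04} --- so your argument has to stand on its own. Your second paragraph (the direction $K_1\ge K_2$: LP duality for $\mcf$, the dual characterization of $c_1$ via demand/capacity pairs on $D$ satisfying the cut condition, and routing the capacities along shortest paths of $G$) is correct and standard. The trouble is the other direction, and it is created in part by your own reformulation. Your scaling observation is right: read literally, the $1$-Lipschitz constraint can be scaled away, so the inner infimum collapses to $c_1(D,d|_D)$. But the formulation that the rounding argument actually uses (and the one that follows from the LLR/GNRS machinery) is the one in which $f$ is genuinely $1$-Lipschitz on all of $V$ and only the contraction on $D$-pairs is measured: then the cut decomposition of $f$ consists of cuts of $V$, the capacity side is charged to $\sum_e \cp(e)\len(e)$ because no edge of $G$ is expanded, and $K_1\le K_2$ is immediate, while $K_2\le K_1$ is the LP-duality direction run with cut packings over subsets of $V$. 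By collapsing $K_2$ to the terminal distortion you have made the hard direction equivalent to your extension lemma --- the assertion that restricting the cut-packing LP from cuts of $V$ to cuts of $D$ loses nothing, i.e.\ that there is $\mu\in\mathrm{CUT}(V)$ with $\mu\le d$ and $\langle\dem,\mu\rangle\ge\langle\dem,d\rangle/c_1(D,d|_D)$ --- and that lemma, which under your reading carries the entire content of the theorem, is exactly what you do not prove.

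Moreover, the sketch you give for it does not work as stated. After dualizing, what you need is: from edge capacities $y$ on $G$ satisfying the cut condition for $\dem$, produce capacities $z$ on pairs of $D$ with $z(A,D\setminus A)\ge \dem(A,D\setminus A)$ for all $A\subseteq D$ and $\sum_{x,y\in D} z(x,y)\,d(x,y)\le \sum_e y(e)\len(e)$, with constant exactly $1$, since the theorem is an equality. Taking $z$ from a Gomory--Hu tree of $(G,y)$ fails this cost bound even in the trivial case $D=V$: on the unit-capacity, unit-length cycle $C_n$ every pairwise mincut is $2$, so any Gomory--Hu tree has $n-1$ edges of weight $2$ and $d$-weighted cost at least $2(n-1)$, while $\sum_e y(e)\len(e)=n$ (the correct choice there is simply $z=y$, which the Gomory--Hu route does not recover). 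Nor does laminarity supply the charging you assert: for the tree corresponding to the nested cuts $\{1,\dots,i\}$ of the cycle, the single edge $\{n,1\}$ lies on the boundary of all of them, so one graph edge is charged by $\Theta(n)$ laminar cuts. So there is a genuine gap: either prove the extension/packing lemma (whose validity at the exact-constant level you have not established), or --- the cleaner and intended route --- prove the theorem with the non-contraction on $D$ measured relative to maps that are $1$-Lipschitz on all of $V$, where both directions are the standard LLR-type arguments and no terminal-extension step arises.
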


In particular, the Okamura-Seymour Theorem (Thm. \ref{thm:OS}) can be restated as the following
fact about embeddings of planar graphs:  For any metric planar graph $G=(V,E)$
and any face $F \subseteq V$, there exists a 1-Lipschitz mapping $f : V \to L_1$
such that $\dist(f|_F)=1$.

\medskip
\noindent
{\bf Vertex-capacitated flows and $\ell_1^{\mathrm{dom}}$ embeddings.}
Unfortunately, $L_1$ embeddings are not sufficient for the study
of vertex-capacitated flow/cut gaps; we refer to \cite{FHL08} for
some examples.
Instead, \cite{FHL08} uses a stronger notion of embedding.
For simplicity, we discuss such embeddings
only for finite metric spaces.
An {\em $\ell_1^{\mathrm{dom}}$ embedding of a finite pseudometric space $(X,d)$}
is a random 1-Lipschitz mapping $\Lambda : X \to \mathbb R$.
One then defines
$$
\dist(\Lambda) = \max_{x, y \in X} \frac{\mathbb E\,|\Lambda(x)-\Lambda(y)|}{d(x,y)}\,,
$$
and writes $c_1^{\mathrm{dom}}(X,d)$ for the infimum of $\dist(\Lambda)$ over
all such random mappings $\Lambda : X \to \mathbb R$.
It is straightforward to verify that $c_1(X,d) \leq c_1^{\mathrm{dom}}(X)$
and there are many interesting cases when this inequality is strict (see \cite{FHL08,BKL07}).
Such embeddings were initially studied by Matousek and Rabinovich \cite{MR03}.
It was shown in \cite{FHL08} that they can be used to bound
vertex-capacitated flow/cut caps, and \cite{CKRV12} extended this to undirected polymatroid networks.

\begin{theorem}[\cite{CKRV12}]
Consider a graph $G=(V,E)$ and a subset $D \subseteq V$.  Suppose
there is a constant $K \geq 1$ such that for
every metric $d$ supported on $G$, we have $c_1^{\mathrm{dom}}(D,d) \leq K$.
Then for every set of polymatroid capacities $\vec \rho$ on $G$
and every $\dem : V \times V \to [0,\infty)$ supported on $D$,
we have
$$
\mcf_G(\vec \rho,\dem) \geq \frac{1}{2K} \Phi_G(\vec \rho,\dem)\,.
$$
\end{theorem}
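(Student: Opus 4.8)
The plan is to prove the equivalent inequality $\Phi_G(\vec\rho,\dem)\le 2K\cdot\mcf_G(\vec\rho,\dem)$, by LP duality followed by a rounding of the dual solution using an $\ell_1^{\mathrm{dom}}$ embedding.

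\emph{Step 1 (the dual).} I would write the maximum concurrent polymatroid flow as a linear program in path‑flow variables, carrying one polymatroid constraint $\sum_{e\in S}\varphi(e)\le\rho_v(S)$ for each $v\in V$ and each $S\subseteq E(v)$, and take the LP dual. The dual variables are nonnegative weights $\gamma_{v,S}$; writing $\ell_v(e)=\sum_{S\subseteq E(v):\,e\in S}\gamma_{v,S}$ and using the level‑set representation of the Lov\'asz extension $\hat\rho_v$ (so that $\sum_S\gamma_{v,S}\rho_v(S)$ may be taken equal to $\hat\rho_v(\ell_v)$ at optimality, via $\hat\rho_v(\ell_v)=\max\{\langle\ell_v,w\rangle:w\in P_{\rho_v}\}\le \mathbb E_{S\sim\gamma_v}\rho_v(S)$ for any $\gamma_v$ with these marginals), and using that an edge $e=(u,v)$ occurs in the constraints of \emph{both} $u$ and $v$, one obtains
\[
\mcf_G(\vec\rho,\dem)\;=\;\min_{(\ell_v)_{v\in V}}\ \frac{\sum_{v\in V}\hat\rho_v(\ell_v)}{\sum_{u,v}\dem(u,v)\,d_\ell(u,v)}\,,
\]
where each $\ell_v:E(v)\to[0,\infty)$ and $d_\ell$ is the shortest‑path metric for the edge length $\len_\ell(e)=\ell_u(e)+\ell_v(e)$ on $e=(u,v)$. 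The key structural point is that, because of this incidence pattern, the dual optimum is genuinely an \emph{edge}‑length function, so $d_\ell$ is a metric supported on $G$ — exactly the object the hypothesis constrains, which is why no detour through vertex‑weighted metrics (as needed in the purely vertex‑capacitated setting) arises. Substituting $\ell_v=\mathbf 1_{g^{-1}(v)}$ for a cut $S\subseteq E$ and an optimal valid $g$ recovers, incidentally, the easy direction $\mcf_G\le\Phi_G$.

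\emph{Step 2 (rounding).} Fix an optimal dual $(\ell_v^\ast)$, let $\len^\ast$ be the induced edge length and $d^\ast=d_{\len^\ast}$. By $c_1^{\mathrm{dom}}(D,d^\ast)\le K$ there is a random $1$‑Lipschitz map $\Lambda:(D,d^\ast)\to\mathbb R$ with $\mathbb E\,|\Lambda(x)-\Lambda(y)|\ge K^{-1}d^\ast(x,y)$ for $x,y\in D$; extend each realization to a $1$‑Lipschitz map on all of $(V,d^\ast)$, still denoted $\Lambda$, by the standard formula $\Lambda(v)=\inf_{x\in D}\big(\Lambda(x)+d^\ast(x,v)\big)$. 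For a threshold $\theta$ let $A_\theta=\{v:\Lambda(v)\le\theta\}$ and build a valid map $g_\theta$ on the edge boundary $\partial A_\theta$ by sending each boundary edge $e=(u,v)$, on its interval of active thresholds (which has length $|\Lambda(u)-\Lambda(v)|$), to $u$ on a fraction $\ell_u^\ast(e)/\len^\ast(e)$ of it and to $v$ on the rest, with the $v$‑part placed adjacent to $\Lambda(v)$ — crucially using the \emph{optimal fractional partition} $\ell^\ast$, since naive rules (e.g.\ always assigning to the endpoint inside $A_\theta$) can lose an unbounded factor by dumping boundary edges onto expensive vertices. This coupling makes $g_\theta$ deterministic in $\theta$, and for each fixed $v$ the sets $g_\theta^{-1}(v)$ are nested as $\theta$ recedes from $\Lambda(v)$ on either side. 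Integrating over $\theta\in\mathbb R$ and averaging over $\Lambda$, the denominator satisfies
\[
\mathbb E\!\int_{\mathbb R}\sum_{u,v}\dem(u,v)\,\sigma_{\partial A_\theta}(u,v)\,d\theta\ \ge\ \sum_{u,v}\dem(u,v)\,\mathbb E\,|\Lambda(u)-\Lambda(v)|\ \ge\ \tfrac1K\sum_{u,v}\dem(u,v)\,d^\ast(u,v)\,,
\]
using $\supp(\dem)\subseteq D$ and $\int_{\mathbb R}\sigma_{\partial A_\theta}(u,v)\,d\theta\ge|\Lambda(u)-\Lambda(v)|$; while for the numerator, nestedness lets the identity $\hat\rho_v(x)=\int_0^\infty\rho_v(\{e:x(e)\ge t\})\,dt$ apply separately above and below $\Lambda(v)$, giving
\[
\mathbb E\!\int_{\mathbb R}\nu_{\vec\rho}(\partial A_\theta)\,d\theta\ \le\ \sum_{v}\mathbb E\big[\hat\rho_v(\zeta_v^+)+\hat\rho_v(\zeta_v^-)\big]\ \le\ 2\sum_{v}\hat\rho_v(\ell_v^\ast)\,,
\]
where $\zeta_v^\pm(e)$ is the length of the set of thresholds on which $e$ is a boundary edge assigned to $v$ with $\Lambda$ larger (resp.\ smaller) at the other endpoint, and $\zeta_v^\pm\le\ell_v^\ast$ holds pointwise because $|\Lambda(u)-\Lambda(v)|\le d^\ast(u,v)\le\len^\ast(e)=\ell_u^\ast(e)+\ell_v^\ast(e)$. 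Dividing the two displays, the expected ratio of capacity to separated demand is at most $2K\cdot\mcf_G(\vec\rho,\dem)$, so some realization yields an edge cut $S$ with $\Phi_G(S;\vec\rho,\dem)\le 2K\cdot\mcf_G(\vec\rho,\dem)$, which is the claim.

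I expect the main obstacle to be the numerator estimate: the valid maps must be chosen according to the optimal fractional partition $\ell^\ast$ \emph{and} coupled across thresholds so that the relevant sets stay nested, since otherwise the per‑threshold rounding is not a threshold rounding of a single vector and cannot be compared to $\hat\rho_v$. The factor $2$ lost here is genuine — it is already attained on a two‑edge path, where an interior vertex is charged at each of its two incident path‑edges while the corresponding cut pays for it once — and it is the same defect that forces the factor $2$ in the single‑commodity statement recalled in the introduction; as only a universal constant is lost, this is all that is needed.
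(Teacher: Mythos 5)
Your proposal is correct and follows essentially the same route as the cited proof and the paper's own machinery: the duality theorem (Theorem \ref{thm:duality}) followed by line-embedding threshold rounding in which each boundary edge's active interval of thresholds is split between its endpoints according to $\ell_u^*,\ell_v^*$ and charged to the Lov\'asz extensions, losing a factor $2$ from the two directions at each vertex and a factor $K$ from the $\ell_1^{\mathrm{dom}}$ contraction. This is precisely the rounding that Lemma \ref{lem:deground} generalizes from the line to $\Delta$-thin tree maps (a $1$-Lipschitz map into $\mathbb R$ being $2$-thin), and your assignment rule with the $v$-part placed adjacent to $\Lambda(v)$ matches the paper's rule \eqref{eq:assignrule}.
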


Despite the power of the preceding theorem,
it is insufficient for proving our main results.
Since $c_1^{\mathrm{dom}}(X,d)$ is at least
the Euclidean distortion of $(X,d)$,
Bourgain's lower bound on the Euclidean distortion of trees~\cite{Bour86}
implies that there are $n$-point tree metrics $(T_n,d_n)$
with $c_1^{\mathrm{dom}}(T_n,d_n) = \Omega(\sqrt{\log \log n})$.
In the next section, we introduce a new notion of embedding
that is sufficient for proving vertex-capacitiated and
polymatroid versions of the Okamura-Seymour theorem.

\subsection{Length functions, star-shaped embeddings, and single-scale gradients}

We first setup a polymatroid embedding problem
which follows from the duality theorem of \cite{CKRV12}.
Fix a finite ground set $S$.
Given a function $\rho : \{0,1\}^S \to \{0,1\}$,
we define its {\em Lov\'asz extension $\hat \rho : [0,\infty)^S \to [0,\infty)$} by
$$
\hat \rho(z) = \int_0^{\infty} \rho(z^{\theta}) d\theta\,,
$$
where $z^{\theta} \in \{0,1\}^S$ has $(z^{\theta})_i=1$
whenever $z_i \geq \theta$.
Observe that for a constant $\alpha > 0$, we have $\hat \rho(\alpha \cdot z) = \alpha \cdot \hat \rho(z)$.
We will associate $2^S$ and $\{0,1\}^S$ via the mapping which sends
a subset $A \subseteq S$ to its characteristic function $\1_A \in \{0,1\}^S$.
Likewise, we will associate functions $S \to [0,\infty)$ with
elements of $[0,\infty)^S$.

In the rest of this section, we will consider families of functions $\mathcal F = \{\ell_v : E(v) \to [0,\infty)\}_{v \in V}$
associated to a graph $G=(V,E)$.  Given a length function $\len : E \to [0,\infty)$, we say that {\em $\mathcal F$ is adapted to $\len$}
if for every edge $e = \{u,v\} \in E$, we have
$$
\len(e) \leq \ell_u(e) + \ell_v(e)\,.
$$

\begin{theorem}[Duality Theorem, {\cite[Sec.~3]{CKRV12}}]
\label{thm:duality}
For any graph $G=(V,E)$ the following holds.
For any polymatroid capacities $\vec \rho = \{\rho_v : v \in V\}$
and any demands $\dem : V \times V \to [0,\infty)$,
\begin{equation} \label{eq:duality}
\mcf_G(\vec \rho,\dem) = \min_{\len, \{\ell_v\}} \left[ \frac{\sum_{v \in V} \hat \rho_v(\ell_v)}{\sum_{u,v \in V} \dem(u,v) d_{\len}(u,v)} \right],
\end{equation}
where the minimum is over all length functions $\len : E \to [0,\infty)$ on $G$
and all families $\{\ell_v : E(v) \to [0,\infty)\}_{v \in V}$ adapted to $\len$.
\end{theorem}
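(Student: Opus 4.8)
The plan is to read \eqref{eq:duality} as an instance of linear programming duality. Formulate $\mcf_G(\vec\rho,\dem)$ as the linear program that maximizes $\e\ge 0$ over path‑flows $f_P\ge 0$ — one variable for each simple path $P$ in $G$, writing $\mc P_{st}$ for the $s$‑$t$ paths and $\flow(e)=\sum_{P\ni e}f_P$ — subject to $\sum_{P\in\mc P_{st}}f_P\ge \e\,\dem(s,t)$ for all pairs $(s,t)$ and $\sum_{e\in S}\flow(e)\le\rho_v(S)$ for all $v\in V$ and $S\subseteq E(v)$. Assuming not all demands vanish (otherwise both sides of \eqref{eq:duality} are infinite), this LP is feasible and bounded, so strong duality applies. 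Introducing multipliers $y_{st}\ge 0$ on the demand constraints and $z_{v,S}\ge 0$ on the capacity constraints, and noting that the coefficient of $f_P$ in the $(v,S)$ constraint is $|P\cap S|$, the dual is: minimize $\sum_{v}\sum_{S\subseteq E(v)}\rho_v(S)\,z_{v,S}$ subject to $y,z\ge 0$, $\sum_{s,t}\dem(s,t)\,y_{st}\ge 1$, and $\sum_{e\in P}\sum_{v\in e}\sum_{S\subseteq E(v),\,e\in S}z_{v,S}\ge y_{st}$ for every $(s,t)$ and $P\in\mc P_{st}$. The key substitution is $\ell_v(e):=\sum_{S\subseteq E(v):\,e\in S}z_{v,S}$ together with $\len(e):=\ell_u(e)+\ell_v(e)$ for $e=\{u,v\}$. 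Then $\{\ell_v\}_{v\in V}$ is adapted to $\len$, the path constraint reads $\len(P)\ge y_{st}$ for all $P\in\mc P_{st}$, i.e.\ $d_\len(s,t)\ge y_{st}$, and the demand constraint yields $\sum_{s,t}\dem(s,t)\,d_\len(s,t)\ge\sum_{s,t}\dem(s,t)\,y_{st}\ge 1$.

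The crux is the following representation of the Lov\'asz extension: for a monotone submodular $\rho$ on a finite ground set $S$ with $\rho(\emptyset)=0$ and any $z\in[0,\infty)^S$,
\[
\hat\rho(z)\ =\ \min\Big\{\textstyle\sum_{A\subseteq S}\lambda_A\,\rho(A)\ :\ \lambda\ge 0,\ \ \sum_{A\subseteq S}\lambda_A\,\1_A\ge z\Big\}\,.
\]
To see this, dualize the LP on the right: its dual is $\max\{\langle z,y\rangle : y\ge 0,\ \langle\1_A,y\rangle\le\rho(A)\ \text{for all }A\}$, whose feasible region is the nonnegative part of the submodular polyhedron of $\rho$. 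By Edmonds' greedy rule, $\max\{\langle z,y\rangle : y \text{ in the submodular polyhedron}\}=\hat\rho(z)$, and monotonicity forces the greedy maximizer to be nonnegative, so the extra constraint $y\ge 0$ is vacuous; strong LP duality then gives the identity. This is the only place the submodularity and monotonicity hypotheses are genuinely used, and I expect it (together with the bookkeeping for the change of variables $z_{v,S}\mapsto\ell_v$) to be the main obstacle; everything else is routine LP manipulation.

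It remains to combine the pieces. For ``$\le$'' in \eqref{eq:duality}: take an optimal dual solution $(y,z)$, so $\sum_v\sum_S\rho_v(S)z_{v,S}=\mcf_G(\vec\rho,\dem)$; build $\len$ and $\{\ell_v\}$ as above; applying the representation with $\lambda=z_{v,\cdot}$ (which satisfies $\sum_S z_{v,S}\1_S=\ell_v$) gives $\hat\rho_v(\ell_v)\le\sum_S\rho_v(S)z_{v,S}$, hence
\[
\frac{\sum_v\hat\rho_v(\ell_v)}{\sum_{s,t}\dem(s,t)\,d_\len(s,t)}\ \le\ \sum_v\sum_S\rho_v(S)\,z_{v,S}\ =\ \mcf_G(\vec\rho,\dem)\,.
\]
For ``$\ge$'': given any $(\len,\{\ell_v\})$ with $\{\ell_v\}$ adapted to $\len$ and $D:=\sum_{s,t}\dem(s,t)\,d_\len(s,t)>0$, choose for each $v$ a minimizer $\lambda^v$ in the representation of $\hat\rho_v(\ell_v)$ and set $z_{v,S}:=\lambda^v_S/D$ and $y_{st}:=d_\len(s,t)/D$; adaptedness of $\{\ell_v\}$ together with $\sum_S\lambda^v_S\1_S\ge\ell_v$ shows $(y,z)$ is dual‑feasible, and its objective equals $\tfrac1D\sum_v\hat\rho_v(\ell_v)$, so weak duality gives $\mcf_G(\vec\rho,\dem)\le\sum_v\hat\rho_v(\ell_v)/D$. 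Taking the infimum over all $(\len,\{\ell_v\})$ — attained at the optimal dual vertex — yields \eqref{eq:duality}.
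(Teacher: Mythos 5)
Your proposal is correct. Note, however, that the paper itself gives no proof of this statement: it is imported verbatim from \cite[Sec.~3]{CKRV12}, so there is no in-paper argument to compare against. Your derivation essentially reconstructs the one in that reference: the same path-flow LP, the same dual variables, and the same use of the Lov\'asz extension, the only cosmetic difference being that you first write the dual with exponentially many set variables $z_{v,S}$ and then compress them to $\ell_v$ via the covering-LP characterization $\hat\rho(z)=\min\{\sum_A \lambda_A\rho(A):\lambda\ge 0,\ \sum_A\lambda_A\1_A\ge z\}$ (Edmonds' greedy plus monotonicity to drop the sign constraint), whereas \cite{CKRV12} works directly with a convex program written in terms of $\hat\rho_v$. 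Two small points worth making explicit: the covering-LP identity needs the normalization $\rho_v(\emptyset)=0$, which is implicit in the paper's notion of polymatroid capacities (and is anyway forced if $\hat\rho_v$ is to be finite), and the degenerate cases ($\dem\equiv 0$, or positive demands only between pairs in different components, where $d_{\len}$ is infinite) should be dispatched separately, as you partially do.
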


The preceding theorem shows that to prove flow/cut gaps,
it suffices to find for every given length function $\len$ and any $\{\ell_v\}_{v\in V}$ adapted to $\len$,
a set $S \subseteq E$
for which
$$
\Phi_G(S; \vec \rho,\dem) \leq C \cdot \frac{\sum_{v \in V} \hat \rho_v(\ell_v)}{\sum_{u,v \in V} \dem(u,v) d_{\len}(u,v)}
$$
for some constant $C > 0$.  This gives rise to an embedding problem with differs
from the classical one in a way which we now describe informally.

In the case of edge-capacitated flows and $L_1$ embeddings,
to satisfy the Lipschitz property, it suffices
to consider the stretch of each edge separately.  For vertex-capacitated flows,
and more generally polymatroid networks, we must {\em coordinate} the stretch
of the edges adjacent to a vertex.  In essence,
a vertex has to ``pay'' in the corresponding ``Lipschitz constant''
if {\em any} of its adjacent edges is stretched.  Thus we should
try as much as possible to stretch the edges adjacent to
a vertex simultaneously.

This makes some standard techniques (e.g. random embeddings into trees as in
\cite{GNRS04}) inappropriate for our study (although some
of the principles in \cite{GNRS04} will prove invaluable).
Certainly $\ell_1^{\mathrm{dom}}$ embeddings achieve this coordination
because they are (by definition) Lipschitz in every coordinate,
but as we mentioned earlier, they are insufficient for proving our main theorems.

To satisfy this goal, we must pay careful attention to the image of the edges
in our embeddings.  On the other hand, to overcome the limitations
of $\ell_1^{\mathrm{dom}}$, we will increase our target spaces
to include general metric trees.

\medskip
\noindent
{\bf Star-shaped mappings.}
Say that a graph $H$ is {\em star-shaped} if $H$ is the subdivision of some star graph.
Suppose that $G=(V,E)$ is a graph, $T$ is a tree,
and $\lambda : V \to V(T)$ is an arbitrary map.
For every $u,v \in V(T)$, let $P_{uv} \subseteq V(T)$ be the unique simple path
between $u$ and $v$ in $T$.  We say that $\lambda$ is a {\em star-shaped mapping}
if, for every $u \in V(T)$, the induced graph on $$\{ P_{uv} : v \in V(T), E(\lambda^{-1}(u),\lambda^{-1}(v))\neq \emptyset\}$$
is star-shaped.  In other words, if we consider the paths in $T$ which correspond to edges in $G$,
then all such paths emanating from the same vertex in $T$ should form a star-shaped subgraph.

In addition to controlling the {\em shape} of a mapping, we need to control
the lengths of the ``arms'' of the star simultaneously.  Fortunately (and this property
will be crucial to the approach of Section \ref{sec:retract}), we will only
need to bound the stretch over single scales.

\medskip
\noindent
{\bf Single-scale $\ell_{\infty}$ gradients.}
If we are given a metric graph $G=(V,E,\len)$ and a mapping $f : V \to (X,d_X)$ into a metric space $(X,d_X)$,
we make the following definition:  For any $\tau > 0$,
$$
|\nabla_{\tau} f(u)|_{\infty} = \sup \left\{ \frac{d_X(f(u),f(v))}{\len(u,v)} : \{u,v\} \in E \textrm{ and } \len(u,v) \in [\tau, 2\tau] \right\}\,.
$$

In Section \ref{sec:rounding}, we prove the following theorem
which shows how such mappings can be used for polymatroid flow/cut gaps.

\begin{theorem}[Main rounding theorem]
\label{thm:rounding}
Let $G=(V,E,\len)$ be a metric graph and suppose there exists a random metric tree $T$
and a random star-shaped mapping $F : V \to V(T)$ such that for some $K \geq 1$,
\begin{equation} \label{eq:rounding-max-scale-gradient}
\max_{v \in V} \sup_{\tau > 0} \mathbb E\,|\nabla_{\tau} F(v)|_{\infty} \leq K\,.
\end{equation}

Then for any family of functions $\left\{\ell_v : E(v) \to [0,\infty)\right\}_{v \in V}$
adapted to $\len$, and for any polymatroid capacities $\vec \rho = \{\rho_v\}_{v \in V}$
and demands $\dem : V \times V \to [0,\infty)$,
we have
\begin{equation}\label{eq:finalbnd}
\Phi_G(\vec \rho,\dem) \leq \frac{64 K \sum_{v \in V} \hat \rho_v(\ell_v)}{\sum_{u,v \in V} \dem(u,v) \cdot \E\,[d_T(F(u),F(v))]}\,.
\end{equation}
\end{theorem}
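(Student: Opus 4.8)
The plan is to pass to the sparsest-cut formulation $\Phi_G(\vec\rho,\dem)=\min_{S\subseteq E}\Phi_G(S;\vec\rho,\dem)$ and run a standard averaging argument: it suffices to exhibit a random cut $S$, coupled with the random pair $(T,F)$, whose expected cost and expected separated demand together certify \eqref{eq:finalbnd}. Given a realization of $(T,F)$, I would sample a point $p$ on the metric tree $T$ with respect to its length measure (an improper measure, of total mass $\sum_{\hat e\in E(T)}\len_T(\hat e)$), and let $S_p\subseteq E$ be the pullback under $F$ of the two-sided partition of $V(T)$ obtained by deleting the tree edge containing $p$. If the endpoints of $\{u,v\}\in E$ lie on opposite sides of that tree edge, then every $u$--$v$ path in $G$ must use an edge of $S_p$, so $\sigma_{S_p}(u,v)\ge \1[p\in P_{F(u)F(v)}]$; integrating over $p\in T$ gives $\int\sigma_{S_p}(u,v)\,dp\ge d_T(F(u),F(v))$, which reproduces the denominator of \eqref{eq:finalbnd} after taking $\E_{(T,F)}$. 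By the averaging principle (if the integrand $\nu_{\vec\rho}(S_p)-C\sum_{u,v}\dem(u,v)\sigma_{S_p}(u,v)$ has nonpositive total integral then some $(T,F,p)$ realizes $\Phi_G(S_p;\vec\rho,\dem)\le C$), the entire theorem reduces to the single estimate
\begin{equation}\label{eq:plan-main}
\E_{(T,F)}\int_{T}\nu_{\vec\rho}(S_p)\,dp \;\le\; 64\,K\sum_{v\in V}\hat\rho_v(\ell_v).
\end{equation}

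To bound the cost, fix $(T,F)$, choose a root of $T$ (I would take it random, uniform with respect to the length measure), and exploit that $\nu_{\vec\rho}(S_p)=\min_{g}\sum_{v}\rho_v(g^{-1}(v))$ is a minimum over valid assignments $g:S_p\to V$: for each $p$ take the assignment sending every cut edge to the endpoint whose $F$-image lies on the side of the deleted tree edge \emph{not} containing the root. With this rule, an edge $e=\{u,v\}$ is charged to $v$ only when the deleted tree edge lies both on the root-to-$F(v)$ path \emph{and} on $P_{F(u)F(v)}$; hence, as $p$ sweeps the root path of $F(v)$, the sets $S_p\cap g^{-1}(v)$ form a nested family, and $\int_T \rho_v\big(S_p\cap g^{-1}(v)\big)\,dp$ collapses to $\hat\rho_v(r_v)$, where $r_v\in[0,\infty)^{E(v)}$ and $r_v(e)$ is the $T$-distance from $F(v)$ to the point where $P_{F(u)F(v)}$ leaves the root path of $F(v)$. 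Summing over $v$, \eqref{eq:plan-main} becomes $\E\big[\sum_v\hat\rho_v(r_v)\big]\le 64K\sum_v\hat\rho_v(\ell_v)$. This is where the star-shaped hypothesis enters: it forces $r_v$ to be supported only on those $e\in E(v)$ whose other endpoint $F$-maps into the single arm of $v$'s star pointing toward the root, so the reaches $r_v(e)$ behave like a one-dimensional radial profile around $F(v)$ rather than a sum over many arms.

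For the final estimate I would bound $r_v(e)\le d_T(F(v),F(u))$ and invoke the single-scale gradient hypothesis: with $j(e)$ the scale such that $\len(e)\in[2^{j(e)},2^{j(e)+1}]$, one has $d_T(F(v),F(u))\le\len(e)\,|\nabla_{2^{j(e)}}F(v)|_\infty$, while the adapted condition gives $\len(e)\le\ell_v(e)+\ell_u(e)$. Splitting $E(v)$ into scales, using subadditivity and positive homogeneity of the Lov\'asz extension $\hat\rho_v$, and rebalancing each edge's charge between its two endpoints along its $\ell$-split, the task reduces to comparing sums of the form $\sum_j 2^{j}\rho_v\big(\{e\in E(v):\ell_v(e)\gtrsim 2^j\}\big)$ against $\hat\rho_v(\ell_v)$; the key point is that the adapted condition makes $\hat\rho_v(\ell_v)$ dominate this geometric sum, so summing the \emph{scale-free} bound $\E\,|\nabla_{2^j}F(v)|_\infty\le K$ over all scales $j$ costs only an $O(1)$ factor — which is exactly why no $\log n$ loss appears — and the accumulated constants can be pushed below $64$.

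The main obstacle is precisely this last reconciliation. The "non-root side" assignment charges $e=\{u,v\}$ to $v$ with a reach as large as $d_T(F(u),F(v))\le(\ell_v(e)+\ell_u(e))\,|\nabla_{2^{j(e)}}F(v)|_\infty$, whereas the budget $\hat\rho_v(\ell_v)$ available at $v$ should only absorb the $\ell_v(e)$-portion; splitting this charge between the two endpoints so that each pays only for its own $\ell$-share, while preserving the nestedness that turned each per-vertex integral into a single $\hat\rho_v$, is the delicate step, and it is where the choice of the (random) root, the star-shaped structure, and the submodular inequalities for $\hat\rho_v$ all have to be used together. Controlling the interaction of this rebalancing with the scale decomposition — and thereby nailing the constant $64K$ — is the crux of the proof.
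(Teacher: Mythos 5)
Your overall architecture matches the paper's (cut $G$ along tree edges, certify the denominator by integrating separation over the tree, bound the numerator by choosing a valid assignment and collapsing per-vertex integrals into Lov\'asz extensions), but the step you yourself call ``the crux'' is precisely the content of the paper's proof, and the concrete rule you propose in its place fails. With the ``non-root-side'' assignment, an edge $e=\{u,v\}$ can be charged to $v$ with reach up to $d_T(F(u),F(v))$ even when $\ell_v(e)=0$ and $\ell_u(e)=\len(e)$; taking $\rho_v$ large then gives $\hat\rho_v(r_v)>0$ while $\hat\rho_v(\ell_v)=0$, so no bound of the form $O(K)\sum_v\hat\rho_v(\ell_v)$ can hold for that assignment. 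The rebalancing you defer is therefore indispensable, and once you rebalance so that each endpoint pays only within its own $\ell$-budget --- which is exactly the paper's assignment rule \eqref{eq:assignrule} in Lemma~\ref{lem:deground} --- the positions charged to $v$ are no longer confined to the single root path of $F(v)$: they spread over the arms emanating from $F(v)$, and nestedness must be recovered arm by arm. This is why the paper first converts the star-shaped map into a random $4$-thin map (Lemma~\ref{lem:degreduce}, costing the factors $4$ and a factor $2$ loss in expected distances, whence Corollary~\ref{cor:starshaped} with constant $8$) before integrating; your scheme has no substitute for this reduction.

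The second missing piece is the expectation bound. Even with the corrected assignment one must show that the expected Lov\'asz-extension cost of the random reach profile is at most $O(K)\,\hat\rho_v(\ell_v)$, and ``subadditivity plus positive homogeneity, summing the scale-free gradient bound over all scales at $O(1)$ cost'' does not deliver this: summing a uniform bound $K$ over all scales costs a factor equal to the number of scales, and $\E[\hat\rho_v(\cdot)]$ of a random vector is not controlled coordinate-wise because its level sets move with the randomness ($\hat\rho_v$ is convex, so Jensen runs the wrong way). The paper's mechanism is different and essential: normalize so $\len=\ell_u+\ell_v$, round the $\ell$'s up to dyadic values \eqref{eq:dyadic}--\eqref{eq:equality}, set the random length $\tilde\ell_v(e)=2\ell_v(e)\,d_T(F(u),F(v))/\len(e)$ only on edges with $\ell_v(e)\ge\ell_u(e)$ (so $\len(e)\in[\tau/2,2\tau]$ when $\ell_v(e)=\tau$ and two gradient scales give $\E[\tilde\tau_i]\le 2K\tau_i$), and then pass to the monotone envelope $\hat\ell_v$, whose level sets coincide with those of $\ell_v$; this makes $\hat\rho_v(\hat\ell_v)$ a linear combination of the fixed numbers $\rho_v(\ell_v^{\tau_i})$ with random scalar coefficients, which can be averaged term by term, and the dyadic spacing \eqref{eq:dyadic1}--\eqref{eq:dyadic2} is what turns the resulting sum into a geometric series comparable to $\hat\rho_v(\ell_v)$ with no logarithmic loss, yielding $64K = 8\cdot 4K\cdot 2$. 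Since your proposal neither supplies a working assignment rule nor this averaging argument, it is a plausible high-level plan but not a proof.
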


\subsection{The embedding theorem}

In light of
Theorem \ref{thm:rounding}, we are able to prove Theorem \ref{thm:polyoks}
by constructing appropriate random embeddings into trees.
In the present section we state our main embedding theorem
and give an outline of its proof.

\begin{theorem}\label{thm:main}
There exist constants $K,L \geq 1$ such that the following holds.
If $G=(V,E)$ is a metric planar graph, and $F \subseteq V$ is any face of $G$,
then there exists a random tree $T$ and random star-shaped mapping $\Lambda : V \to V(T)$
such that the following conditions hold.
\begin{enumerate}
\item For every $u \in V$ and $\tau > 0$, we have
\begin{math}
\E \,|\nabla_{\tau} \Lambda (u)|_{\infty} \leq K.
\end{math}
\item For every $u,v \in F$,
\begin{equation} \label{eq:main-colipschitz}
\E\left[ d_T(\Lambda(u),\Lambda(v)) \right]\geq \frac{d_G(u,v)}{L}\,.
\end{equation}
\end{enumerate}
\end{theorem}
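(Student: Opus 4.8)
The plan is to build the random tree $T$ and the random star-shaped map $\Lambda$ by a recursive shortest-path decomposition of $G$ that is adapted to the face $F$ (this is the role of Section~\ref{sec:retract}), and then to verify the two conclusions more or less separately: conclusion~(2) is a quantitative version of the Okamura--Seymour phenomenon (Theorem~\ref{thm:OS}) for pairs lying on $F$, whereas conclusion~(1) --- the single-scale gradient bound on \emph{all} of $V$ --- is where the new machinery does its work. Before anything else I would put $G$ into a normal form by standard planar surgery: subdividing edges, duplicating the vertices of $F$, and inserting zero-length edges, so that we may assume $G$ is $2$-connected, $F$ is the outer face, and the boundary of $F$ is a simple cycle $C$. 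By Theorem~\ref{thm:rounding} it then suffices to bound $\E\,|\nabla_\tau \Lambda(u)|_\infty$ at each scale $\tau$ in isolation, together with a lower bound on $\E\,d_T(\Lambda(u),\Lambda(v))$ for $u,v\in F$.

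The construction itself peels $G$ into ``regions'', each drawn in a disk whose boundary cycle is assembled from sub-arcs of $C$ and from shortest paths chosen at earlier stages. Given a region $R$ with boundary cycle $\partial R$, one picks --- randomly, from a random point of $\partial R$ and a random radius --- a shortest path $Q$ joining two vertices of $\partial R$ that splits the disk of $R$ into two sub-disks; $Q$ becomes an ``arm'' of the tree $T$, the two sub-regions are hung off the two ends of that arm, vertices near $Q$ are retracted onto $Q$, and the recursion continues on each sub-region. The very first step handles the cyclic structure of $F$: a random point of $C$ turns the outer boundary cycle into a path, and randomizing this cut point is what lets a tree target accommodate the cycle $C$. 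Star-shapedness of $\Lambda$ should then follow essentially by construction: the only $T$-paths issuing from a node $t\in V(T)$ are the arm(s) created when the region of $t$ was cut and the stubs leading into the sub-regions attached at $t$, and these meet one another only at $t$, so the union of the $T$-images of the $G$-edges incident to a fixed vertex is a subdivided star. One convenient way to make conclusion~(2) nearly automatic is to arrange that $T$ is, after the surgery, realized on a subgraph of $G$ containing $F$ with $\Lambda|_F$ the identity, so that $d_T\ge d_G$ there; failing that, it follows from the facts that $C$ is a cycle and the cuts are shortest paths, which force a pair $u,v\in F$ to be separated by the peeling with constant probability at the scale $\approx d_G(u,v)$, yielding a $T$-distance $\gtrsim d_G(u,v)$.

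The main obstacle is conclusion~(1): showing that a fixed edge $e=\{u,v\}$ with $\len(e)\in[\tau,2\tau]$ satisfies $\E\,d_T(\Lambda(u),\Lambda(v)) = O(\tau)$. The danger is the classical one that defeats generic hierarchical decompositions: an edge can lie near a cutting path at each of the $\Theta(\log n)$ scales above $\tau$ and thereby accumulate expected stretch $\Omega(\tau\log n)$ --- which is precisely why an $\ell_1^{\mathrm{dom}}$-type embedding, Lipschitz in each coordinate but effectively summed over all scales, is too weak, and why we pass to the single-scale gradient and to metric trees as targets. Beating this requires a padding estimate for the random peeling that is strong enough to dominate the number of scales: one must show that once $u$ and $v$ come to lie in a common sub-region they remain together, that the scale at which they are finally separated is $O(\tau)$, and --- the quantitatively delicate point --- that the $2^i$-weighted tail $\sum_i \pr[\text{$u,v$ first separated at scale }2^i]\cdot 2^i$ is still $O(\tau)$, which is not implied by a mere per-scale union bound. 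Establishing this estimate --- in effect a scale-robust, Okamura--Seymour-flavored isoperimetric inequality for a random shortest-path cut of a disk with boundary cycle $C$ --- is the heart of the proof and the step I expect to be the most technical; controlling it is exactly what forces the single-scale/star-shaped formulation of Theorem~\ref{thm:main} in the first place.
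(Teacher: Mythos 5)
Your proposal replaces the paper's construction with a one-shot random shortest-path peeling of $G$, but the decisive step is missing: you yourself flag the scale-robust padding estimate --- that for an edge of length $\approx\tau$ the weighted tail $\sum_i \pr[\text{first separated at scale } 2^i]\cdot 2^i$ is $O(\tau)$ --- as ``the heart of the proof,'' and you neither prove it nor reduce it to anything known. Nothing in your sketch supplies it: the padded-partition theorem for planar graphs (Theorem~\ref{thm:kpr}) concerns ball-carving--type partitions, not the recursive shortest-path cuts you describe, and no padding property of such peelings is established anywhere. The paper never needs such an estimate for a peeling of $G$; instead it (a) builds a \emph{connected random retraction} of $G$ onto the face using padded partitions (Theorems~\ref{thm:connretract} and~\ref{thm:ontoface}), where exactly the geometric-tail computation you gesture at is carried out and is made possible by the padding property, producing an \emph{outerplanar} image, and then (b) embeds that outerplanar graph into random trees by a modified Charikar--Sahai induction (Theorem~\ref{thm:tree-embed}), a step that is $1$-Lipschitz with probability one and hence adds nothing to the gradient. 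In your scheme the entire burden rests on the unproved peeling estimate, so what you have is a plan, not a proof.

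The two claims you treat as easy also hide real work. Star-shapedness does not ``follow essentially by construction'': when vertices near a cutting path $Q$ are retracted onto $Q$ while their neighbors land in different sub-regions, the images of the edges at a fixed vertex can branch off the tree at several distinct points, and you state no invariant preventing this; in the paper the analogous issue consumes most of Section~\ref{sec:outerplanar} (injectivity, the good-vertex invariant~\eqref{eq:def:good}, the anchor-point choices of Corollary~\ref{cor:choice}, and Lemma~\ref{lem:oneend}, which uses outerplanarity via $K_4$-minor-freeness). Conclusion~(2) is likewise not automatic: being ``separated with constant probability at scale $\approx d_G(u,v)$'' does not by itself lower bound $\E\,d_T(\Lambda(u),\Lambda(v))$ unless you also show the tree distance created by that separation is comparable to the scale --- this is the content of the case analysis in Lemma~\ref{lem:distortion}, which needs the $160$-slack structure and $(1/6,1/16)$-apart anchors. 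Your ``convenient'' variant of forcing $T$ to be a subtree of $G$ containing $F$ with $\Lambda|_F$ the identity would trivialize (2) but is in serious tension with (1): known $\Omega(\log n)$ per-edge expected-stretch lower bounds for trees realized inside grids indicate the tree cannot in general live inside $G$ (the paper's trees do not). Finally, your appeal to Theorem~\ref{thm:rounding} to ``reduce to single scales'' is misplaced: that theorem converts an embedding already satisfying (1) and (2) into a flow/cut bound; it does not help establish either condition.
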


Combined with the rounding theorem (Theorem \ref{thm:rounding}) and duality (Theorem \ref{thm:duality}),
this immediately yields Theorem \ref{thm:polyoks} and, in particular, a  vertex-capacitated
Okamura-Seymour theorem (Theorem \ref{thm:nodeoks}).

\begin{proof}[Proof of Theorem \ref{thm:polyoks}]
Fix a planar graph $G=(V,E)$, a face $F\subseteq V$ of $G$, demands $\dem:V\times V\to [0,\infty)$ supported
on $F$,
and polymatroid capacities $\vec{\rho}$.
By Theorem~\ref{thm:rounding}, there exists a length function $\len:E\to [0,\infty)$ and a family
$\{\ell_v:E(v) \to [0,\infty)\}_{v\in V}$ adapted to $\len$ such that
\begin{equation*}
\mcf_G(\vec \rho,\dem) = \frac{\sum_{v \in V} \hat \rho_v(\ell_v)}{\sum_{u,v \in V} \dem(u,v) d_{\len}(u,v)}\,.
\end{equation*}
Consider the metric planar graph $G=(V,E,\len)$.
By Theorem~\ref{thm:main} there exist a random tree $T$ and
a random star-shaped embedding $\Lambda:V\to T$ satisfying~\eqref{eq:rounding-max-scale-gradient}
with $K=1$, and~\eqref{eq:main-colipschitz} with some universal
constant $L>0$.
Applying Theorem~\ref{thm:rounding} with $\Lambda$, we conclude
\begin{equation*}
\Phi_G(\vec \rho,\dem) \stackrel{\eqref{eq:finalbnd} \land \eqref{eq:main-colipschitz}}{\le}
\frac{64KL \sum_{v\in V} \hat \rho(\ell_v)}{\sum_{u,v\in V} \dem(u,v) d_G(u,v)}
\stackrel{\eqref{eq:duality}}{=} 64KL \cdot \mcf_G(\vec{\rho},\dem). \qedhere
\end{equation*}
\end{proof}

We now give a brief outline of the proof of Theorem \ref{thm:main}.

\medskip
\noindent
{\bf First step:  Outerplanar graphs into random trees.}
Theorem \ref{thm:main} is proved in two main steps.  First, in Section \ref{sec:outerplanar},
we prove it for the special case of outerplanar graphs; this is precisely the situation
where the face $F$ satisfies $F=V$ in Theorem \ref{thm:main}.
It is known that outerplanar graph metrics embed into distributions
over dominating trees \cite{GNRS04}, but this is not sufficient
for our purposes; these maps are not star-shaped and do not satisfy the gradient
conditions.  Instead our proof is inspired by the result
of Charikar and Sahai \cite{CS01} stating that every outerplanar graph
metric can be embedded into the product of two trees with $O(1)$ distortion.  In particular,
each of these two embeddings must be $O(1)$-Lipschitz, so one hopes
that the star-shaped and gradient properties might be
achievable with their techniques.

Indeed, by following their basic induction
and using a heavily modified variant of their embedding,
we are able to obtain the desired result.
Unfortunately, for this purpose we are not able to obtain
a product of two trees; instead we need an entire distribution,
but this suffices in light of Theorem \ref{thm:rounding}.

\medskip
\noindent
{\bf Second step:  Retracting onto a face.}
The second step follows the approach of \cite{EGKRTT10} for proving
that face metrics (i.e. those metrics arising from taking the shortest-path
metric on a planar graph restricted to a face) embed into distributions
over dominating trees; this result was originally proved in \cite{LS09}
via a different method.
In \cite{EGKRTT10}, the authors randomly retract a planar graph $G=(V,E)$ onto a prescribed face $F \subseteq V$
in such a way that edges are not stretched too much in expectation.

Their embedding has the rather convenient property (not shared by previous
random retractions) that stars are mapped to stars,
satisfying our star-shaped ambitions.  Thus we are left to wrestle
with the $\ell_{\infty}$ gradient issue.  By using stronger properties
of known random partitioning schemes for planar graphs \cite{KPR93}---specifically
the fact that such partitions are ``padded'' in the language of \cite{GKL03,KLMN04}---we
are able to show that all single-scale $\ell_{\infty}$ gradients are $O(1)$
in expectation under the random retraction.
We remark that this mapping does {\em not} preserve global $\ell_{\infty}$ gradients
in expectation, and this is the main reason we have introduced the single-scale definition.
This pushes some non-trivial work to the rounding theorem in Section \ref{sec:rounding}
which must now show that all the scales can be rounded simultaneously.

\subsection{Preliminaries}

Here we review some additional definitions before
diving into the proofs.
We deal exclusively with
finite graphs $G = (V,E)$ which are free of loops
and parallel edges.
We will also write $V(G)$ and $E(G)$ for
the vertex and edge sets of $G$, respectively.
A {\em metric graph} is a graph $G$
equipped with a non-negative length function on edges $\len : E \to [0,\infty)$.
We will denote the pseudometric
space associated with a metric graph $G$ as $(V,d_G)$, where $d_G$ is the
shortest path metric according to the edge lengths.
Note that $d_G(x,y)=0$ may occur even when $x \neq y$, and
if $G$ is disconnected, there will be pairs $x,y \in V$ with
$d_G(x,y)=\infty$.  We allow both possibilities throughout the paper.
An important point is that {\em all length functions in the paper
are assumed to be reduced,} i.e. they satisfy
the property that for every $e = (u,v) \in E$, $\len(e) = d_G(u,v)$.
For $v \in V$ and $R \geq 0$, we write $B_G(v,R) = \{ u \in V : d_G(u,v) \leq R \}$.

In the present paper, paths in graphs are always simple, i.e., no vertex appears twice.
Given a metric graph $G$, we extend the length function to
paths $P \subseteq E$ by setting $\len(P) = \sum_{e \in P} \len(e)$.
We recall that for a subset $S \subseteq V$, $G[S]$
represents the induced graph on $S$.  For a pair of subsets $S,T \subseteq V$,
we use the notations $E(S,T) = \{ (u,v) \in E : u \in S, v \in T \}$
and $E(S) = E(S,S)$, and if $v \in V$, we write $E(v) = E(\{v\}, V\setminus\{v\})$.

Given a set $X$, a \emph{random map $F:X \to Y$} is shorthand for some probability space
$(\Omega,\mu)$ and a distribution over mappings
$\{F_\omega:X \to Y_\omega\}_{\omega\in \Omega}$.
Note that both $F$ and $Y$ are random variables.
In all our constructions, $X$ and $Y_{\omega}$ are finite sets.
When no confusion arises, probabilistic expressions containing $F$ and $Y$ should be understood as been taken over the probability space $(\Omega,\mu)$.
When we refer to a property of $Y$ or $F$, it should be understood that this property holds
\emph{for all}  $Y_\omega$ and $F_\omega : X \to Y_{\omega}$, $\omega\in \Omega$.

\section{Polymatroid networks and embeddings}
\label{sec:rounding}

Our primary goal in the present section is to prove
Theorem \ref{thm:rounding} which shows that random
tree embeddings can be used to bound flow/cut gaps in polymatroid networks.
We start in Section \ref{sec:ssrounding} by showing that
a fixed ``thin'' mapping into a tree can be use for rounding.
In Section \ref{sec:degreduce}, we prove the crucial property
that every star-shaped mapping into a tree can be converted
to a random thin map.  Finally in Section \ref{sec:rround},
we combine these results with a multi-scale analysis
to show that a suitable distribution over star-shaped mappings into random trees suffices
for rounding.

\subsection{Thin-star tree rounding}
\label{sec:ssrounding}

Consider a graph $G$, a connected tree $T$, and a map $f : V(G) \to V(T)$.
For every pair $u,v \in V$, let $P_{uv}$ denote the unique
simple path from $f(u)$ to $f(v)$ in $T$.
We say that $f$ is {\em $\Delta$-thin}
if, for every $u \in V(G)$, the induced graph
on $\bigcup_{v : \{u,v\} \in E(G)} P_{uv}$
can be covered by $\Delta$ simple paths in $T$
emanating from $f(u)$.
The next lemma gives a generalization of line-embedding rounding \cite{FHL08,CKRV12}
to arbitrary thin maps into trees.

\begin{lemma}\label{lem:deground}
Let $G=(V,E)$ be a graph, $T$ a connected metric tree,
and let $f : V \to V(T)$ be a $\Delta$-thin map.
Suppose that the set of functions $\left\{\ell_v : E(v) \to [0,\infty)\right\}_{v \in V}$ is such that
$d_T(f(u),f(v)) \leq \ell_u(e) + \ell_v(e)$ for every edge $e = \{u,v\} \in E$.

Then for any polymatroid capacities $\vec \rho = \{\rho_v\}_{v \in V}$
and demands $\dem : V \times V \to [0,\infty)$,
there exists a subset of edges $S \subseteq E$ such that
$$
\Phi_G(S;{\vec \rho,\dem}) \leq \frac{\Delta \sum_{v \in V} \hat \rho_v(\ell_v)}{\sum_{u,v \in V} \dem(u,v) \cdot d_T(f(u),f(v))}\,.
$$
\end{lemma}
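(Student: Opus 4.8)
The plan is to realize the cut $S$ by thresholding along the tree $T$, in the spirit of the line-embedding rounding of \cite{FHL08,CKRV12}, but organized around the $\Delta$ simple paths promised by $\Delta$-thinness. First I would root the tree $T$ at an arbitrary vertex $r$ and, for each point $p\in V(T)$, consider the family of ``ball cuts'' $B(r,\theta)=\{q\in V(T):d_T(r,q)\le\theta\}$ as $\theta$ ranges over $(0,\infty)$; pulling back through $f$ gives a family of candidate edge sets $S_\theta=E(f^{-1}(B(r,\theta)),f^{-1}(V(T)\setminus B(r,\theta)))$. The key linear identity is that for a single simple path, $\int_0^\infty \sigma_{S_\theta}(u,v)\,d\theta$ recovers the tree distance $d_T(f(u),f(v))$ (up to the standard factor); since a tree is built from such paths, averaging over $\theta$ produces a distribution over cuts whose expected separation of any pair $u,v$ equals $d_T(f(u),f(v))$ (or a constant multiple thereof), which handles the denominator of the sparsity.

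The subtlety, and what the $\Delta$-thin hypothesis is for, is the numerator: I must bound $\nu_{\vec\rho}(S_\theta)=\min_{g\text{ valid}}\sum_v\hat\rho_v(g^{-1}(v))$, and in the polymatroid setting one cannot account for cut edges independently at each vertex — one has to hand each cut edge $e=\{u,v\}$ to whichever endpoint is ``responsible'' for it, and then control the Lov\'asz extension $\hat\rho_v$ applied to the vector of lengths of edges assigned to $v$. Here I would use the decomposition of $\bigcup_{v:\{u,v\}\in E}P_{uv}$ into $\Delta$ simple paths emanating from $f(u)$: a threshold cut at level $\theta$ meets each such path in at most one edge, so at most $\Delta$ of the edges incident to $u$ are cut ``on the $u$-side'', and — after choosing the valid assignment $g$ so that each cut edge $e=\{u,v\}$ is assigned to the endpoint whose incident path it lies on (with $\ell_u(e)$ absorbing the cost, using the hypothesis $d_T(f(u),f(v))\le\ell_u(e)+\ell_v(e)$) — the vector $g^{-1}(u)$, viewed as a $0/1$-scaled combination, is dominated by $\Delta$ copies of coordinate indicators, whence $\hat\rho_u(\text{cut edges at }u)\lesssim \Delta\,\hat\rho_u(\ell_u)$ by monotonicity, positive homogeneity, and subadditivity of $\hat\rho_u$. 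Integrating this bound over $\theta$ against the same measure used for the denominator, and then invoking the averaging/pigeonhole principle ($\E[\text{numerator}]/\E[\text{denominator}] \ge \min_\theta \text{ratio}$), yields a single $\theta$ — hence a single $S=S_\theta$ — with $\Phi_G(S;\vec\rho,\dem)$ at most $\Delta\sum_v\hat\rho_v(\ell_v)$ over $\sum_{u,v}\dem(u,v)d_T(f(u),f(v))$, as required.

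The main obstacle I anticipate is the bookkeeping in the numerator bound: making precise the claim that a level set $B(r,\theta)$ cuts each of the $\Delta$ covering paths at most once (this needs the paths to be simple and emanating from a common point, so that the radial distance $d_T(r,\cdot)$ is monotone along each path except possibly at one turning point — one may need to split each path at its closest-to-root vertex, turning $\Delta$ paths into $2\Delta$ monotone pieces, and absorb the factor of $2$), and then choosing the valid map $g$ coherently across all of $V$ so that every cut edge is charged to an endpoint that ``owns'' it via one of its covering paths. A clean way to do the latter is: for a cut edge $e=\{u,v\}$ with $f(u)$ on the root side, assign $e$ to $u$; then the edges assigned to $u$ are exactly those incident edges whose far endpoint fell outside $B(r,\theta)$ while $u$ stayed inside, and these all lie on the covering paths of $u$, at most one per path. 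Once that structural claim is nailed down, the Lov\'asz-extension estimate and the integration are routine, so I would spend the bulk of the write-up on this combinatorial step.
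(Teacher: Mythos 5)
Your overall strategy (threshold cuts along $T$, thinness to control the polymatroid cost, then a mediant/averaging step to extract one cut) is the right one, but the specific cut family you chose breaks the denominator bound. With ball cuts $B(r,\theta)$ around a single root, a demand pair $u,v$ whose images lie in different branches at (roughly) equal distance from $r$ is never separated: for every $\theta$ both $f(u)$ and $f(v)$ are on the same side of the radius-$\theta$ sphere, and if, say, $u$ and $v$ are adjacent in $G$, the edge $\{u,v\}$ is never placed in $S_\theta$, so $\int_0^\infty \sigma_{S_\theta}(u,v)\,d\theta$ can be $0$ while $d_T(f(u),f(v))$ is large (take $T$ the path $x$--$c$--$y$ with unit lengths rooted at $c$, $f(u)=x$, $f(v)=y$). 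Single-root ball cuts recover only the radial part $|d_T(r,f(u))-d_T(r,f(v))|$ of the tree metric, not $d_T(f(u),f(v))$. The paper instead indexes cuts by the edges of $T$: for $a\in E(T)$ it takes $S(a)=\{\{u,v\}\in E: a\in E(P_{uv})\}$, and then $\sum_{a\in E(T)}\len_T(a)\,\sigma_{S(a)}(x,y)\ge d_T(f(x),f(y))$ holds for every pair --- the standard decomposition of a tree metric into edge cuts. Reorganizing your construction edge-by-edge (or branch-by-branch) in this way also removes your monotonicity worry: the threshold parameter becomes the distance from $f(v)$ along each of the $\le\Delta$ covering paths, which is monotone by definition, so no splitting into $2\Delta$ pieces (and hence no loss of the stated constant $\Delta$) is needed.

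There is a second gap in the numerator. Your concrete valid assignment (``give a cut edge to its root-side endpoint'') does not tie the charge at a vertex to its length function, so the estimate $\hat\rho_u(\text{edges charged to }u)\le\Delta\,\hat\rho_u(\ell_u)$ does not follow: if $\ell_u(e)=0$ and $\ell_v(e)=d_T(f(u),f(v))$, every cut position along $P_{uv}$ must be charged to $v$, but your rule can charge them to $u$, where $\hat\rho_u(\ell_u)$ pays nothing for $e$. The assignment has to depend on where along $P_{uv}$ the cut falls: the paper assigns $e=\{u,v\}$ to $u$ exactly when the cut position is within distance $\ell_u(e)$ of $f(u)$, and otherwise to $v$, which is legitimate precisely because $d_T(f(u),f(v))\le \ell_u(e)+\ell_v(e)$. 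With that rule, along each covering path of $v$ the set of edges charged to $v$ at threshold $\theta$ is contained in $\{e:\ell_v(e)\ge\theta\}$, and integrating the monotone $\rho_v$ in $\theta$ gives $\hat\rho_v(\ell_v)$ per path, i.e.\ $\Delta\,\hat\rho_v(\ell_v)$ in total. Your earlier sentence gestures at this (``with $\ell_u(e)$ absorbing the cost''), but the rule you actually state does not implement it.
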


\begin{proof}
For every edge $\{u,v\} \in E$, let $P_{uv}$ denote the unique simple
path between $f(u)$ and $f(v)$ in $T$.
For every $a \in E(T)$, we define the subset $S(a) \subseteq E$ by $$S(a) = \left\{\vphantom{\bigoplus} \{u,v\} \in E : a \in E(P_{uv}) \right\}\,.$$
Observe that if $a \in E(P_{xy})$ for some $x,y \in V$,
then $\sigma_{S(a)}(x,y)=1$.
Thus we have, for any $x,y \in V$,
\begin{equation}
\sum_{a \in E(T)} \len_{T}(a) \cdot \sigma_{S(a)}(x,y) \geq
d_T(f(x),f(y))\,.\label{eq:demands}
\end{equation}

Next, we give an upper bound on $\nu_{\vec \rho}(S(a))$ for every $a \in E(T)$.
First, arbitrarily orient the edges of $E(T)$.
Fix $a=(x,y) \in E(T)$ according to this orientation.
Consider any $\lambda \in [0,\len_T(a)]$.  For an edge $e \in S(a)$,
choose the orientation $e=(u,v)$ such that $P_{uv}$ traverses $a$ in the order $(x,y)$.
We will assign the edge $e$ to the vertex $u$ if
\begin{equation}\label{eq:assignrule}
d_T(f(u),x) + \lambda  \leq \ell_u(e)\,,
\end{equation}
and otherwise assign $e$ to the vertex $v$.
This gives, for every $\lambda \in [0,\len_T(a)]$, a valid assignment $g_{a,\lambda} : S(a) \to V$.
Integrating yields
\begin{equation}\label{eq:boundone}
\len_T(a) \cdot \nu_{\vec \rho}(S(a)) \leq \int_{0}^{\len_T(a)} \left(\sum_{v \in V} \rho_v(g_{a,\lambda}^{-1}(v))\right)\,d\lambda\,.
\end{equation}

Our next goal is to show that, for every $v \in V$, we have
\begin{equation}\label{eq:boundtwo}
\sum_{a \in E(T)} \int_{0}^{\len_T(a)} \rho_v(g_{a,\lambda}^{-1}(v))\,d\lambda \leq \Delta \hat \rho_v(\ell_v)\,.
\end{equation}
To this end, fix $v \in V$.  Since $f$ is $\Delta$-thin, there are
$k \leq \Delta$ paths $P_1, P_2, \ldots, P_k$ in $T$ emanating from $f(v) \in V(T)$
such that the following holds:  If $S(a)$ contains an edge with endpoint $v$,
then $a \in E(P_i)$ for some $i \in \{1,2,\ldots,k\}$.  Thus we can write
\begin{equation}\label{eq:paths}
\sum_{a \in E(T)} \int_{0}^{\len_T(a)} \rho_v(g_{a,\lambda}^{-1}(v))\,d\lambda
\leq \sum_{i=1}^{k} \sum_{a \in E(P_i)} \int_{0}^{\len_T(a)} \rho_v(g_{a,\lambda}^{-1}(v))\,d\lambda\,,
\end{equation}
and it suffices to bound each term of the latter sum separately.

To this end, fix $i \in \{1,2,\ldots,k\}$.
For $\theta \in [0,\len(P_i)]$, let
$$S_v(\theta) = \left\{\vphantom{\bigoplus} \{u,v\} \in E : f(u) \in V(P_i) \textrm{ and } \ell_v(\{u,v\}) \geq \theta \right\}\,.$$
By the assignment rule \eqref{eq:assignrule}, the fact that $d_T(f(u),f(v)) \leq \ell_u(\{u,v\}) + \ell_v(\{u,v\})$ for every $\{u,v\} \in E$,
and monotonicity of $\rho_v$,
we have
\begin{eqnarray*}
\sum_{a \in E(P_i)} \int_0^{\len_T(a)} \rho_v(g^{-1}_{a,\lambda}(v))\,d\lambda
&\leq& \int_0^{\infty} \rho_v(S_v(\theta))\,d\theta \\
&\leq & \int_0^{\infty} \rho_v(\ell_v^{\theta})\,d\theta \\
&=& \hat \rho_v(\ell_v)\,,
\end{eqnarray*}
where in the final line we have used the definition of the Lov\'asz extension $\hat \rho_v$ and
the notation: $\ell_v^{\theta}(\{u,v\}) = 1$ if $\ell_v(\{u,v\}) \geq \theta$ and $\ell_v^{\theta}(\{u,v\}) = 0$ otherwise.
Combining this with \eqref{eq:paths} yields \eqref{eq:boundtwo}.

Now interchanging sums and integrals in \eqref{eq:boundtwo} and summing \eqref{eq:boundone} over $a \in E(T)$ yields
$$
\sum_{a \in E(T)} \len_T(a) \cdot \nu_{\vec \rho}(S(a)) \leq \Delta \sum_{v \in V} \hat \rho_v(\ell_v)\,.
$$
Using this in conjunction with \eqref{eq:demands}, we have
\begin{eqnarray*}
\min_{S \subseteq E} \Phi_G(S;{\vec \rho,\dem}) &\leq & \min_{a \in E(T)} \frac{\nu_{\vec \rho}(S(a))}{\sum_{u,v \in V} \dem(u,v) \sigma_{S(a)}(u,v)} \\
&\leq&
\frac{\sum_{a \in E(T)} \len_T(a) \cdot \nu_{\vec \rho}(S(a))}{\sum_{a \in E(T)} \len_T(a) \sum_{u,v \in V} \dem(u,v) \sigma_{S(a)}(u,v)} \\
&\leq& \frac{\Delta \sum_{v \in V} \hat \rho_v(\ell_v)}{\sum_{u,v \in V} \dem(u,v) d_T(f(u),f(v))}\,,
\end{eqnarray*}
completing the proof.
\end{proof}

\subsection{Random thinning}
\label{sec:degreduce}

Next we show how an arbitrary star-shaped map into a tree can be converted into a random 4-thin map.

\begin{lemma}\label{lem:degreduce}
Let $G=(V,E)$ be a graph, $T$ a connected metric tree, and  let $f:V\to V(T)$ be a $1$-Lipschitz star-shaped map.  Then there exists a random
connected metric tree $T'$ and a random 4-thin map $F : V \to V(T')$ satisfying the following conditions:
\begin{enumerate}
\item $F$ is $1$-Lipschitz with probability one.
\item For every $u,v \in V$, we have
$$
\E\,d_{T'}(F(u),F(v)) \geq \frac12\, d_T(f(u),f(v))\,.
$$
\end{enumerate}
\end{lemma}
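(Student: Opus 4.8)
The plan is to subdivide the tree $T$ at the images of vertices of $G$ and then delete a random subset of subdivided edges so that, after the deletion, around every image point $f(v)$ only a bounded number of ``directions'' survive that still carry edges of $G$ incident to $v$. Since $f$ is star-shaped, for each $u \in V(T)$ the paths $\{P_{uv}\}$ emanating from $u$ that correspond to edges of $G$ form a subdivided star; the branching all happens at a single center vertex. The idea is: at each such center, split its incident star-arms into two groups (say by a fair coin per arm, or by bisecting the cyclic/linear order of arms), and in one random ``half'' of the tree cut the tree just past the center so that those arms are severed from the rest. Doing this independently at every branch center produces a random forest; keep only the component structure we need and reconnect pieces with zero-length edges (or simply work with a forest and note that all relevant pairs stay in the correct relative configuration) to obtain the random tree $T'$, together with the induced map $F$.

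First I would make the construction precise: insert a new degree-2 vertex on every edge of $T$ that lies on some $P_{uv}$ at each point $f(w)$, so that every $f(w)$ is a genuine vertex of the subdivided tree and every ``$P_{uv}$-edge'' of $T$ becomes a path whose interior contains no image point. Then, for each vertex $x \in V(T)$ that is the center of the star $\{P_{uv} : f(u)=x \text{ or } x \in P_{uv}\}$ — more carefully, for each $w\in V$ I look at the star of $G$-edges incident to $w$ as it sits around $f(w)$ — I partition the arms leaving $f(w)$ into two consecutive blocks $A_1^{(w)}, A_2^{(w)}$ of a natural linear order on those arms, flip a fair coin, and for the chosen block cut the tree at the first subdivision vertex along each arm in that block, separating the ``far'' portion of the tree from $f(w)$. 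Reassemble the resulting pieces into a tree $T'$ by adding zero-length edges at the cut points so that distances in $T'$ are at most distances in $T$ on surviving pairs and exactly zero across cuts. The map $F$ is $f$ followed by the (distance-nonincreasing) identification of each retained region of $T$ with its copy in $T'$.

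Property~(1), that $F$ is $1$-Lipschitz with probability one, will follow because $T'$ is obtained from (a subdivision of) $T$ by cutting edges and gluing at zero length, so $d_{T'}(F(u),F(v)) \le d_T(f(u),f(v))$ for every $u,v$, and $f$ is $1$-Lipschitz. The $4$-thinness of $F$ is the structural heart: fix $w \in V$; the $G$-edges incident to $w$ leave $f(w)$ along arms in blocks $A_1^{(w)}\cup A_2^{(w)}$, and after the cut at $f(w)$ (which happens for whichever block was selected) the surviving $P_{uw}$-paths from $f(w)$ live in at most the two blocks of arms directly at $f(w)$ — but each such path may also be truncated by a cut performed at $f(u)$ for the other endpoint $u$. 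The claim is that the union $\bigcup_{u : \{u,w\}\in E} P_{uw}$, inside $T'$, is coverable by $4$ simple paths from $F(w)$: two to account for the (at most) two arm-blocks at $f(w)$ that can still be attached, doubled to absorb the possibility that within a block the paths split after leaving $f(w)$ because of downstream cuts — I will choose the linear order on arms so that, conditioned on the coin at $f(w)$, the retained arms form at most two subpaths-with-pendant-structure that each collapse to $\le 2$ simple paths. I expect the bookkeeping here to be the main obstacle, and I'll want to pin down the exact block-splitting rule (a median split of arms ordered by the tree structure around $f(w)$) so the constant is honestly $4$ and not larger.

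Property~(2) — that $\E\, d_{T'}(F(u),F(v)) \ge \tfrac12 d_T(f(u),f(v))$ — is where the randomness pays off. Fix $u,v \in V$ and consider the path $P_{uv}$ in $T$; the only way $d_{T'}(F(u),F(v)) < d_T(f(u),f(v))$ is that some cut performed at some $f(w)$ lands strictly inside $P_{uv}$ and is charged against this pair. A cut at $f(w)$ affects the $P_{uv}$ distance only if $f(w)$ is an interior branch point relevant to the $uv$-path, and it does so only for the coin outcome that selects the side containing (part of) $P_{uv}$; by choosing the two blocks at each center to be balanced, each potentially-harmful cut event has probability $\tfrac12$, so with probability at least $\tfrac12$ no harmful cut is activated along $P_{uv}$ and $d_{T'}(F(u),F(v)) = d_T(f(u),f(v))$; taking expectation gives the factor $\tfrac12$. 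I would phrase this cleanly by fixing, for each ordered pair $(u,v)$, the at most one ``critical'' center on $P_{uv}$ whose cut can sever $f(u)$ from $f(v)$ in the direction that matters (the branch point where $P_{uv}$ leaves the arm-block containing the $u$-side), and noting its coin is fair and independent of nothing we need — the event ``that coin keeps $f(u)$ attached to $f(v)$'' has probability $\tfrac12$ and on it the full distance survives. The delicate point to get right is ensuring there is genuinely at most one such critical center per pair (so we don't lose a factor $2^k$), which is again governed by the star-shaped hypothesis: along $P_{uv}$, once we leave $f(u)$'s star we are on a single arm, and cuts at intermediate $f(w)$'s are at degree-2 points of $P_{uv}$ and hence severe the path only from one designated side, letting me union-bound — or better, directly identify the unique relevant cut. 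I'd present Property~(2) via this per-pair argument, keeping Property~(1) and $4$-thinness as the structural lemmas and the coin analysis as the probabilistic one.
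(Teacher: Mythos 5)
Your construction (randomly \emph{cutting} a block of arms at each star center and regluing with zero-length edges) does not deliver the key property, $4$-thinness, and I do not see how to repair it within your framework. The star of paths $\{P_{f(w)f(u)} : \{u,w\}\in E\}$ emanating from $f(w)$ may have arbitrarily many arms; after you sever one random block, the surviving block still consists of up to half of those arms, each leaving $F(w)$ in a different direction, so covering them requires unboundedly many simple paths from $F(w)$ --- a ``block of arms'' is not one or two paths. Worse, if you reglue the severed pieces with zero-length edges \emph{at the cut points}, the metric and the local topology at $F(w)$ are unchanged, so nothing has been thinned at all; if instead you relocate or collapse the severed pieces (which is what thinning would actually require), you have not said where they go, and that is exactly where $1$-Lipschitzness and the distance lower bound become delicate. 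Relatedly, your argument for property (2) leans on the claim that each pair $u,v$ has ``at most one critical center'' on $P_{f(u)f(v)}$; but property (2) must hold for \emph{all} pairs, not just edges of $G$, the path between $f(u)$ and $f(v)$ can pass through many centers carrying independent coins, and whether a given cut contracts the pair depends entirely on the unspecified regluing rule. As stated, you are caught between two failure modes: either the cuts do not change the metric (then no thinness) or they do (then a long path can be hit by many independent harmful events and you lose a factor $2^{-k}$, not $\tfrac12$).

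The paper's mechanism is different in kind: instead of deleting arms it \emph{folds} them. Working with a rooted $T$ and inducting from the leaves up (Claim~\ref{claim:reduction}), at each vertex $x$ the edge-disjoint star arms at $x$ are each mapped isometrically onto one of two fixed arms $B_1,B_2$ attached to a new root, chosen by independent fair coins, and the subtrees hanging off are reattached at the images of their roots; vertical paths are mapped isometrically throughout. This merging of many directions into two downward directions (plus at most two upward, since any path in a rooted tree is a union of two vertical paths) is what yields $4$-thinness, and it never expands distances. For the lower bound, the only randomness that can contract $d_T(u,v)$ is the pair of coins for the two arms at the branching level separating $u$ from $v$: if those arms land on different sides of the new root the distance is preserved \emph{exactly} (because arms are mapped isometrically), which happens with probability $\tfrac12$, and pairs inside a common child subtree are handled by the inductive hypothesis since the higher-level folding does not change their distance. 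That isometric-folding invariant is precisely the ingredient your cut-based scheme is missing, and without it neither the thinness count nor the single-critical-coin analysis goes through.
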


\begin{proof}
In what follows, for a tree $T$, we will use the notation $P_{xy}^T$ to denote the unique simple path between $x,y \in V(T)$.

We will proceed by constructing a random metric tree $T'$ and map $\Phi : V(T) \to V(T')$ and then showing that $F=\Phi\circ f$ satisfies the conclusion of the lemma.
Suppose that the tree $T$ is rooted at some fixed vertex.
For a vertex $x\in V(T)$, let $T_x$ be the subtree rooted at $x$. A \emph{vertical path} in a rooted tree is a  path $P$ in which every $u,v\in P$ have ancestor-descendant relationship.

\begin{claim}\label{claim:reduction}
For every $x\in V(T)$, there exists a random metric tree $T'_x$, and a random $1$-Lipschitz map
$\Phi_x : V(T_x) \to V(T'_x)$ which satisfies the following conditions:
 \begin{enumerate}
\item $\Phi_x$ maps every vertical path in $T_x$ isometrically to a vertical path in $T'_x$.
\item For all $v\in V(T_x)$, $\Phi_x|_{V(T_v)}=\Phi_v$.
\item For all $v\in V(T_x)$, the set of vertices $\{ \Phi_x(u) : u \in V(T_v), E(f^{-1}(u),f^{-1}(v)) \neq \emptyset \}$ can be covered by at most \emph{two} vertical paths emanating from $\Phi_x(v)$ in $T'_x$.
 \item For every $u,v \in V(T_x)$, we have
$$
\E\,[d_{T'}(\Phi_x(u),\Phi_x(v))] \geq \frac12\, d_T(u,v)\,.
$$
 \end{enumerate}
\end{claim}
\begin{proof}
We construct the map $\Phi_x$  by induction on the height of $x$ in $T$. When $x$ is a leaf, the statement is partically vacuous. The inductive step is carried in two steps: In the first step we construct a tree $\tilde T_x$ and a map $\tilde \Phi:V(T_x)\to V(\tilde T_x)$ as follows:
Let $u_1,\ldots, u_m$, be the children of $x$ in $T$, and let $T'_1, T'_2, \ldots, T'_m$ be the random trees, and $\Phi_1, \Phi_2, \ldots, \Phi_m$ be the randoms maps
resulting from applying the claim
inductively to each $T_{u_i}$. We construct the graph $\tilde T_x$ by replacing each $T_{u_i}$ with $T'_i$ in $T_x$. We put $\tilde \Phi(x)$ as the root of $\tilde T_x$, and for $v\in V(T_{u_i})$, we put $ \tilde \Phi(v)= \Phi_i(v)$. We also define $\tilde f=  \tilde \Phi\circ f$.
\medskip

Let  $S=\left\{P^{\tilde T_x}_{\tilde{\Phi}(x)\tilde v}: \tilde v\in \tilde T_x, {E(\tilde f^{-1}(x),\tilde f^{-1}(\tilde v))\neq \emptyset}\right\}$, and let $ H$ be the subgraph of $\tilde T_x$ induced by $S$. The map $f$ is a star-shaped, and  each map $\Phi_i$, maps root leaf paths in $T_{u_i}$ to root leaf paths in $ T'_i$, therefore the set $S$ is star-shaped.
Hence,  there exists $k\le m$ edge disjoint paths $P_1, P_2, \ldots, P_k$ in $\tilde T_x$ emanating from $\tilde \Phi(x)$ that cover $S$.

Let $\tilde T_1,\ldots, \tilde T_n$ be the connected components of $\tilde T_x$ after removing the edges of $H$, and let $\tilde v_j\in S$ be the root of the tree $\tilde T_j$.

We define the random tree $T'_x$, and the random  map $\Phi_x:T_x\to T'_x$ as follows.  Consider a root $r$ connected to two paths $B_1$ and $B_2$. We define $\Phi_x(x)=r$.
Moreover, we map each path $P_1, P_2, \ldots, P_{k}$ isometrically and independently  at random to one of the two paths $B_1$, and $B_2$. Then we complete the construction by gluing root of each tree $\tilde T_j$ to $\Phi_x(\tilde v_j)$ (the image of $\tilde v_j$ in either $B_1$ or $B_2$).
\medskip

It is  straightforward to check that $\Phi$ satisfies Claim~\ref{claim:reduction}(i).
Using the inductive hypothesis it is sufficient to check Claim~\ref{claim:reduction}(ii) with respect to the children of  $x$ in $T$, and for them the claim is easily seen to be true.

To verify Claim~\ref{claim:reduction}(iii), first note that for $v\in T_x\setminus \{x\}$ this condition holds by our inductive construction, and Claim~\ref{claim:reduction}(ii).
Moreover, For the case that $v=x$, all the vertices $u\in V(T_x)$ such that $E(f^{-1}(x),f^{-1}(u))\neq \emptyset$ are mapped to paths $B_1$ and $B_2$, therefore Claim~\ref{claim:reduction}(iii) holds for all $v\in V(T_x)$.

To verify Claim~\ref{claim:reduction}(iv), first note that for $u,v\in V(T_x)$, if $u,v\in V(T_{u_i})$ for some $i$, then $d_{T'_i}(\Phi_{u_i}(u),\Phi_{u_i}(v))=d_{T'_x}(\Phi_x(u),\Phi_x(v))$ and by our inductive construction $$\frac 12 d_T(u,v)\leq d_{T_x'}(\Phi_x(u),\Phi_x(v))\,.$$
 Moreover if $u$ and $v$ do not belong to the same subtree rooted at one of $x$'s children, then
with probability $1/2$, $d_T(u,v)=d_{T'_x}(\Phi_x(u),\Phi_x(v))$. Therefore Claim~\ref{claim:reduction}(iv) holds for all $u,v\in V(T_x)$, completing the proof of the claim.
\end{proof}

The map $f$ is star-shaped, therefore there must exist $k$ edge disjoint paths $P_1, P_2, \ldots, P_k$ emanating from $v$ that cover $\{f(u):(u,v)\in E(G)\}$ in $T$. Moreover, since these paths are edge disjoint,
at most one of these paths  is not contained in $T_{f(v)}$. Without loss of generality assume that $P_1, P_2, \ldots, P_{k-1}$ are contained in $T_{f(v)}$.
Let $T'$ and $\Phi$ to be the tree and the map resulting from applying claim to the root of $T$.
By our construction we can cover the image of the paths $P_1, P_2, \ldots, P_{k-1}$ by at most two paths emanating from $\Phi(f(v))$ in $T'$. Finally, every path in a rooted tree is a union of at most two vertical paths and each vertical path in $T$ is mapped to a vertical path in $T'$, the image of the path $P_k$ can be covered by at most two paths emanating from $\Phi(f(v))$ in $T'$, completing the proof of Lemma~\ref{lem:degreduce}.
\end{proof}

The next result follows from Lemma \ref{lem:degreduce} and Lemma \ref{lem:deground}.

\begin{corollary}\label{cor:starshaped}
Let $G=(V,E)$ be a graph, $T$ a connected metric tree,
and $f : V \to V(T)$ a star-shaped mapping.
Suppose that the set of functions $\left\{\ell_v : E(v) \to [0,\infty)\right\}_{v \in V}$ is such that
$d_T(f(u),f(v)) \leq \ell_u(e) + \ell_v(e)$ for every edge $e = \{u,v\} \in E$.

Then for any polymatroid capacities $\vec \rho = \{\rho_v\}_{v \in V}$
and demands $\dem : V \times V \to [0,\infty)$,
there exists a subset of edges $S \subseteq E$ such that
$$
\Phi_G(S;{\vec \rho,\dem}) \leq \frac{8 \sum_{v \in V} \hat \rho_v(\ell_v)}{\sum_{u,v \in V} \dem(u,v) \cdot d_T(f(u),f(v))}\,.
$$
\end{corollary}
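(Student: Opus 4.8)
The plan is to combine the random thinning of Lemma~\ref{lem:degreduce} with the thin-tree rounding of Lemma~\ref{lem:deground}, losing only a small constant factor in the passage from a star-shaped map to a $4$-thin one. Since Lemma~\ref{lem:degreduce} is stated for $1$-Lipschitz maps, I would first equip $G$ with the length function $\len(e) = d_T(f(u),f(v))$ for each edge $e=\{u,v\}$. By the triangle inequality along any path in $G$ one gets $d_{\len}(u,v) \geq d_T(f(u),f(v))$ for all $u,v \in V$ (and equality on edges, so $\len$ is reduced), hence $f : (V,d_{\len}) \to V(T)$ is $1$-Lipschitz; it is star-shaped by hypothesis.

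Applying Lemma~\ref{lem:degreduce} then yields a random connected metric tree $T'$ and a random $4$-thin map $F : V \to V(T')$ that is $1$-Lipschitz with probability one and satisfies $\E\, d_{T'}(F(u),F(v)) \geq \frac12\, d_T(f(u),f(v))$ for all $u,v \in V$. I would then fix an arbitrary realization $(T',F)$. Because $F$ is $1$-Lipschitz, for every edge $e = \{u,v\}$ we have $d_{T'}(F(u),F(v)) \leq d_{\len}(u,v) \leq d_T(f(u),f(v)) \leq \ell_u(e) + \ell_v(e)$, so the \emph{same} family $\{\ell_v\}_{v\in V}$ satisfies the hypothesis of Lemma~\ref{lem:deground} for the $4$-thin map $F$ into $T'$. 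Lemma~\ref{lem:deground} with $\Delta = 4$ thus produces a set $S \subseteq E$ with
$$
\Phi_G(S;\vec\rho,\dem) \leq \frac{4 \sum_{v \in V} \hat\rho_v(\ell_v)}{\sum_{u,v \in V} \dem(u,v)\, d_{T'}(F(u),F(v))}\,.
$$

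Finally I would run an averaging argument to select a good realization. The numerator $\sum_{v} \hat\rho_v(\ell_v)$ is deterministic, and by linearity of expectation together with the second conclusion of Lemma~\ref{lem:degreduce},
$$
\E \sum_{u,v \in V} \dem(u,v)\, d_{T'}(F(u),F(v)) \;\geq\; \frac12 \sum_{u,v \in V} \dem(u,v)\, d_T(f(u),f(v))\,,
$$
so there is a realization in which the demand-weighted distance is at least this large. For that realization the displayed bound above gives $\Phi_G(S;\vec\rho,\dem) \leq 8 \sum_{v} \hat\rho_v(\ell_v) / \sum_{u,v} \dem(u,v)\, d_T(f(u),f(v))$, which is the claim (if the denominator vanishes the statement is vacuous).

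The proof is essentially bookkeeping; all the substance lives in Lemmas~\ref{lem:degreduce} and~\ref{lem:deground}. The only points needing care are equipping $G$ with a length function so that $f$ is genuinely $1$-Lipschitz (so Lemma~\ref{lem:degreduce} applies and the old $\{\ell_v\}$ are still admissible for $F$), and orienting the averaging step so that it picks a realization with \emph{large} demand-weighted distances—these sit in the denominator, so choosing a bad (small) realization would go the wrong way.
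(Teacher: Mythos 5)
Your proposal is correct and matches the paper's intended argument: the paper derives Corollary~\ref{cor:starshaped} exactly by combining Lemma~\ref{lem:degreduce} (random $4$-thinning, losing a factor $2$ in expected distances) with Lemma~\ref{lem:deground} (with $\Delta=4$), which is precisely what you do. Your extra bookkeeping—endowing $G$ with $\len(e)=d_T(f(u),f(v))$ so that the $1$-Lipschitz hypothesis of Lemma~\ref{lem:degreduce} makes sense, and averaging by choosing a realization whose demand-weighted distance is at least its expectation—correctly fills in the details the paper leaves implicit.
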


\subsection{Rounding random star-shaped embeddings}
\label{sec:rround}

Finally, we are ready prove the main result of this section connecting embeddings to polymatroid flow/cut gaps.
We restate Theorem \ref{thm:rounding} here for the sake of the reader

\begin{theorem}
Let $G=(V,E,\len)$ be a metric graph and suppose there exists a random connected metric tree $T$
and a random star-shaped mapping $F : V \to V(T)$ such that for some $K \geq 1$,
$$\max_{v \in V} \sup_{\tau > 0} \mathbb E\,|\nabla_{\tau} F(v)|_{\infty} \leq K\,.$$
Then for any set of functions $\left\{\ell_v : E(v) \to [0,\infty)\right\}_{v \in V}$
that is adapted to $\len$, and
for any polymatroid capacities $\vec \rho = \{\rho_v\}_{v \in V}$
and demands $\dem : V \times V \to [0,\infty)$,
there exists a subset of edges $S \subseteq E$ such that
\begin{equation*}  
\Phi_{\vec \rho,\dem}(S) \leq \frac{64 K \sum_{v \in V} \hat \rho_v(\ell_v)}{\sum_{u,v \in V} \dem(u,v) \cdot \E\,[d_T(F(u),F(v))]}\,.
\end{equation*}
\end{theorem}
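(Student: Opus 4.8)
The plan is to reduce Theorem~\ref{thm:rounding} to the construction of an auxiliary \emph{random} star-shaped embedding to which Corollary~\ref{cor:starshaped} applies, and then to build that embedding from $(T,F)$ by a multi-scale gluing.

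\textbf{Reduction via Corollary~\ref{cor:starshaped}.} Corollary~\ref{cor:starshaped} states that for \emph{every} connected metric tree $T_0$, every star-shaped map $f:V\to V(T_0)$, and every family $\{m_v:E(v)\to[0,\infty)\}_{v\in V}$ with $d_{T_0}(f(u),f(v))\le m_u(e)+m_v(e)$ for all $e=\{u,v\}\in E$, there is a subset $S\subseteq E$ with $\Phi_G(\vec\rho,\dem)\le\Phi_G(S;\vec\rho,\dem)\le 8\sum_{v\in V}\hat\rho_v(m_v)/\sum_{u,v\in V}\dem(u,v)\,d_{T_0}(f(u),f(v))$. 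Since this holds for every realization, if $(T_0,f,\{m_v\})$ is \emph{random} and the star-shapedness and adaptedness hypotheses hold with probability one, then multiplying through by the (nonnegative) denominator and taking expectations gives
\[
\Phi_G(\vec\rho,\dem)\ \le\ \frac{8\,\E\!\left[\sum_{v\in V}\hat\rho_v(m_v)\right]}{\E\!\left[\sum_{u,v\in V}\dem(u,v)\,d_{T_0}(f(u),f(v))\right]}\,.
\]
Thus it suffices to manufacture, from the given random pair $(T,F)$, a random connected metric tree $T'$, a random star-shaped map $f:V\to V(T')$, and length functions $\{m_v\}$ adapted to $d_{T'}$ (in the above sense), such that (a) $\E\sum_v\hat\rho_v(m_v)\le 8K\sum_v\hat\rho_v(\ell_v)$ and (b) $\E\sum_{u,v}\dem(u,v)\,d_{T'}(f(u),f(v))\ge\sum_{u,v}\dem(u,v)\,\E\,d_T(F(u),F(v))$; the two factors of $8$ combine to the claimed $64K$. (One could equivalently thin $F$ first with Lemma~\ref{lem:degreduce} and apply Lemma~\ref{lem:deground}, but using Corollary~\ref{cor:starshaped} directly is cleaner.)

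\textbf{Multi-scale construction.} The naive attempt $T'=T$, $f=F$, $m_v(e)=|\nabla_{\len(e)}F(v)|_\infty\,\ell_v(e)$ is adapted to $d_T$ (using $d_T(F(u),F(v))\le|\nabla_{\len(e)}F(u)|_\infty\len(e)$ and the elementary inequality $\min(a,b)(s+t)\le as+bt$) and satisfies (b) with equality, but it fails (a): because each $\hat\rho_v$ is merely sublinear and monotone, a single edge whose stretch is rarely but enormously large can blow up $\E\,\hat\rho_v(m_v)$ by a factor that grows with the number of length scales in $G$. To repair this I would partition $E$ into dyadic length classes $E_j=\{e:\len(e)\in[2^j,2^{j+1})\}$ and, for each $j$, draw an \emph{independent} copy $(T_j,F_j)$ of $(T,F)$ together with a random partition of $T_j$ into clusters of $d_{T_j}$-diameter $\Theta(K2^j)$ (a metric tree admits such a partition in which an edge of length $r$ is cut with probability $O(r/(K2^j))$). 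Pulling this partition back through $F_j$ and contracting clusters \emph{localizes} the stretch: an edge of $E_j$ receives image-length $O(K2^j)$ in the clustered tree, while an edge of $E_k$ with $k<j$ is contracted outside an event of probability $O(2^{k-j})$, so that in expectation each edge has nonzero image-length at only $O(1)$ scales. Gluing the clustered trees for all scales along a laminar refinement produces a single random connected tree $T'$; one lets $f$ send $v$ to its leaf and sets $m_v$ on $E_j\cap E(v)$ equal to $|\nabla_{2^j}F_j(v)|_\infty$ times $\ell_v$ (capped at the cluster radius on the rare cut event), which keeps adaptedness to $d_{T'}$. Because $\E|\nabla_{2^j}F_j(v)|_\infty\le K$ for every $j$ and every edge is active at $O(1)$ scales, exchanging the sum over $j$ with the expectation gives (a) with an absolute constant; a telescoping estimate over the scales $2^j\lesssim d_T(F(u),F(v))$, together with the fact that laminar gluing does not shrink long distances, gives (b).

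\textbf{Main obstacle.} The crux is the tension between the sublinearity of the $\hat\rho_v$'s and the unavoidable sum over scales: one must guarantee that each edge is charged to the per-vertex capacity at only $O(1)$ scales, for otherwise a factor logarithmic in the aspect ratio of $G$ creeps into (a). This is exactly what the \emph{single-scale} (rather than global) gradient hypothesis and the localizing tree partitions provide, but making them work requires simultaneously (i) keeping $f$ star-shaped through the gluing --- which should be inherited from $F$ being star-shaped, since contracting subtrees and gluing laminarly carry stars to stars --- and (ii) retaining enough of the tree metric in $T'$ for the telescoping lower bound (b). Reconciling (i), (ii), and the $O(1)$-charging is the technical heart of the argument.
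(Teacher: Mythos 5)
Your opening reduction---apply Corollary~\ref{cor:starshaped} realization by realization, multiply through by the (random) demand-weighted tree distance, and take expectations---is exactly the right endgame and matches how the paper concludes. The problem is your diagnosis of the ``naive'' attempt, which is what drives you into the heavy multi-scale machinery. In fact no new tree is needed: the paper keeps the given $T$ and $F$ and only randomizes the adapted lengths. After scaling so that $\len(e)=\ell_u(e)+\ell_v(e)$ and rounding the $\ell_v$'s up to powers of two, one sets $\tilde\ell_v(e)=0$ when $\ell_v(e)<\ell_u(e)$ and $\tilde\ell_v(e)=2\ell_v(e)\,d_T(F(u),F(v))/\len(e)$ otherwise. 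The endpoint case split is the key: every edge charged to $v$ at dyadic level $\tau_i$ has $\len(e)$ within a factor of four of $\tau_i$, so its stretch is controlled by just the two single-scale gradients at $\tau_i/2$ and $\tau_i$. Then the monotone envelope $\hat\ell_v(e)=\sup\{\tilde\ell_v(e'):\ell_v(e')\le\ell_v(e)\}$ has the \emph{same} level sets as $\ell_v$, only with random heights, so one can take expectations level by level; combined with the geometric decay of the dyadic level widths this gives $\E\,\hat\rho_v(\hat\ell_v)\le 4K\,\hat\rho_v(\ell_v)$, and Corollary~\ref{cor:starshaped} finishes with constant $8\cdot 2\cdot 4K=64K$. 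So the ``tension between sublinearity of $\hat\rho_v$ and the sum over scales'' that you identify as the main obstacle is resolved by this monotonization trick; there is no blow-up with the number of scales. (Your specific naive choice $m_v(e)=|\nabla_{\len(e)}F(v)|_\infty\,\ell_v(e)$ does have a defect, but only because it omits the case split, so $\len(e)$ need not be comparable to $\ell_v(e)$; the repair is that one line, not independent copies per scale.)

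The construction you propose instead is not a proof as written, and several of its asserted steps are doubtful. Clusters obtained by pulling back independent partitions of independent copies $(T_j,F_j)$ are not laminar on $V$, and you do not say how the ``laminar refinement'' is formed or why the glued object is a tree into which the map stays star-shaped; for a hierarchy-type tree in which $f$ sends each vertex ``to its leaf,'' the union of the paths from $f(v)$ to the images of its $G$-neighbors typically branches at several distinct ancestors and is then \emph{not} a subdivision of a star, so star-shapedness is not inherited for free. Your accounting also requires (b) with no constant loss (you need exactly $8\times 8K=64K$), yet you contract clusters of diameter $\Theta(K2^j)$, which shrinks distances; recovering even a constant fraction of $\E\,d_T(F(u),F(v))$ from the telescoping is not argued, and the $K$-dependent contraction scale threatens a $K$-dependent loss in (b). As it stands, the three claims at the heart of your argument---(a), (b), and star-shapedness of the glued map---are asserted rather than proved, and the correct (and much shorter) route is the single-tree argument sketched above.
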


\begin{proof}

Using the fact that $\hat \rho_v$ is monotone, we may first scale $\{\ell_v(e): v\in V\,,e\in E(v)\}$ down
and assume
that for $\{u,v\} \in E$, we have
\( 
\len(\{u,v\}) =\ell_u(\{u,v\}) + \ell_v(\{u,v\}).
\) 
Next, by rounding all the length functions up, we may assume that $\{\ell_v(e): v\in V\,,e\in E(v)\}$ are dyadic:
\begin{equation}\label{eq:dyadic}
\{\ell_v(e) : v \in V, e \in E(v) \} \subseteq \{ 2^h : h \in \mathbb Z \},
\end{equation}
and that for $\{u,v\} \in E$, we have
\begin{equation}\label{eq:equality}
\len(\{u,v\}) \geq \frac12 \left(\vphantom{\bigoplus}\ell_u(\{u,v\}) + \ell_v(\{u,v\})\right)\,.
\end{equation}

Now define the random functions $\{\tilde \ell_v : E(v) \to [0,\infty)\}_{v \in V}$ by
$$
\tilde \ell_v(\{u,v\}) =
\begin{cases}
0 & \textrm{if } \ell_v(\{u,v\}) < \ell_u(\{u,v\}) \\
2 \ell_v(\{u,v\}) \cdot \frac{d_T(F(u),F(v))}{\len(u,v)} & \textrm{otherwise.}
\end{cases}
$$
Then, by definition, we have $d_T(F(u),F(v)) \leq \tilde \ell_u(\{u,v\}) + \tilde \ell_v(\{u,v\})$
for every $\{u,v\} \in E$ since $\{\ell_v\}$ is adapted to $\len$.

\medskip

We define a new family $\{\hat \ell_v\}$ by $\hat \ell_v(e) = \sup \{ \tilde \ell_v(e') : \ell_v(e') \leq \ell_v(e) \}$.
Observe that $\hat \ell_v \geq \tilde \ell_v$ pointwise, thus by monotonicity, $\hat \rho_v(\hat \ell_v) \geq \hat \rho_v(\tilde \ell_v)$.
Additionally, we have $\ell_v(e) \leq \ell_v(e')$ if and only if $\hat \ell_v(e) \leq \hat \ell_v(e')$.
Thus the collections of edge sets $\{ \ell_v^{\theta} : \theta \in [0,\infty) \}$ and $\{ \hat \ell_v^{\theta} : \theta \in [0,\infty) \}$
are identical.

Enumerate the set of values $\{\ell_v(e) : e\in E(v)\} \cup \{0\}$ by $0 = \tau_0 <\tau_1 < \tau_2 < \cdots < \tau_k$
so that
\begin{equation}\label{eq:dyadic1}
\hat \rho_v(\ell_v) = \sum_{i=0}^{k-1} (\tau_{i+1}-\tau_i) \rho_v(\ell_v^{\tau_i}) \geq \frac12 \sum_{i=0}^{k-1} \tau_{i+1} \rho_v(\ell_v^{\tau_i})\,,
\end{equation}
where the latter inequality holds since $\tau_{i+1} \geq 2 \tau_i$ by \eqref{eq:dyadic}.

For $i=1,2,\ldots,k$, we can likewise set $\hat \tau_i = \max \{ \hat \ell_v(e) : \ell_v(e)=\tau_i \}$.
By construction, we have $0 = \hat \tau_0 \leq \hat \tau_1 \leq \hat \tau_2 \leq \cdots \leq \hat \tau_k$, and
$$
\hat \rho_v(\hat \ell_v) = \sum_{i=0}^{k-1} (\hat \tau_{i+1} - \hat \tau_i) \rho_v(\ell_v^{\tau_i})\,.
$$
Finally, define $\tilde \tau_i = \max \{ \tilde \ell_v(e) : \ell_v(e)=\tau_i \}$.
Observe that if $\tilde \tau_{i+1} \neq \hat \tau_{i+1}$, then $\hat \tau_{i+1}=\hat \tau_i$,
thus we can write
\begin{equation}\label{eq:dyadic2}
\hat \rho_v(\hat \ell_v)  \leq \sum_{i=0}^{k-1} \tilde \tau_{i+1} \rho_v(\ell_v^{\tau_i})\,.
\end{equation}

Using the definition of $\tilde \ell_v$, we have
$$
\tilde \tau_i = \max \left(\{0\} \cup \left\{ \tilde \ell_v(e) : \ell_v(e)=\tau_i \textrm{ and } \ell_v(e) \geq \frac12 \len(e) \right\}\right)\,.
$$

Furthermore, by \eqref{eq:equality}, if $\ell_v(e)=\tau_i$, then $\len(e) \geq \frac12 \tau_i$.  Thus,
$$
\mathbb E[\tilde \tau_i] \leq \left(\E |\nabla_{\tau_i/2} F(v)|_{\infty} + \E|\nabla_{\tau_i} F(v)|_{\infty}\right) \tau_i \leq 2K \tau_i\,.
$$
Using \eqref{eq:dyadic2} and \eqref{eq:dyadic1}, this implies
$$
\mathbb E[\hat \rho_v(\hat \ell_v)] \leq \sum_{i=0}^{k-1} \E[\tilde \tau_{i+1}] \rho_v(\ell_v^{\tau_{i}})
\leq 2K \sum_{i=0}^{k-1} \tau_{i+1}  \rho_v(\ell_v^{\tau_{i}}) \leq 4K \hat \rho_v(\ell_v)\,.
$$
Applying Corollary \ref{cor:starshaped} completes the proof.
\end{proof}

\section{Star-shaped embeddings of outerplanar graphs into trees}
\renewcommand{\ext}{\mathsf{glue}}
\label{sec:outerplanar}

Our goal is now to prove that every metric outerplanar graph admits a random Lipschitz, star-shaped
embedding into a random tree.

\begin{theorem}\label{thm:tree-embed}\label{thm:starpreserve}
There is a constant $K \geq 1$ such that the following holds.
Let $G=(V,E)$ be a metric outerplanar graph.  Then there is a random metric tree $T$ and
a random 1-Lipschitz, star-shaped mapping $F : V \to V(T)$ such that for every $u,v \in V$,
$\E[d_T(F(u),F(v))] \geq d_G(u,v)/K$.
\end{theorem}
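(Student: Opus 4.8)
The plan is to prove Theorem~\ref{thm:tree-embed} by induction on the structure of the outerplanar graph, following the recursive ``two‑tree'' embedding of Charikar and Sahai but replacing their deterministic product of two trees by a distribution over single trees (which suffices by Theorem~\ref{thm:rounding}). First I would reduce to the $2$‑connected case: decompose $G$ along its block--cut tree, embed each block independently, and glue the resulting trees by identifying the images of each shared cut vertex. Distances between vertices in distinct blocks decompose additively along the block path, and the same holds in the glued tree, so the expected co‑Lipschitz bound for $G$ follows from the bound on each block. The one subtlety already visible here is that the union of edge‑paths emanating from a cut vertex must remain a subdivision of a star, which will force a strengthening of the inductive hypothesis (see below).

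For a $2$‑connected outerplanar graph $G$ with outer cycle $C$, I would induct on the number of internal faces, i.e.\ on the size of the weak dual tree. In the base case $G=C$ is a single cycle (or a single edge): delete one edge of $C$ at random with probability proportional to its length. The result embeds isometrically into a path, which is $1$‑Lipschitz and trivially star‑shaped, and the standard computation gives $\mathbb E\,d_{\mathrm{path}}(u,v) = 2 d_+ d_- / \len(C) \ge \min(d_+,d_-) = d_C(u,v)$, where $d_\pm$ are the two arc lengths between $u$ and $v$. For the inductive step, pick a leaf face $f$ of the weak dual; its boundary is a simple cycle consisting of a chord $c=\{a,b\}$ together with an outer path $Q$ from $a$ to $b$ all of whose internal vertices have degree $2$ in $G$. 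Let $G'$ be $G$ with the internal vertices of $Q$ deleted, so $c$ becomes a boundary edge of $G'$. Since all length functions are reduced and $Q$ has total length at least $\len(c)=d_G(a,b)$, one checks that $d_{G'}(x,y)=d_G(x,y)$ for all $x,y\in V(G')$ (a path through the interior of $Q$ must traverse all of $Q$ and hence is never shorter than using $c$). Thus the inductive embedding $F':V(G')\to V(T')$ already preserves those distances in expectation, and it remains to extend $F'$ to the interior of $Q$ by a \emph{local} randomized construction on the cycle $f$: lay the path $Q$ either as two pendant arms attached at $F'(a)$ and $F'(b)$, or as a detour re‑using part of $P^{T'}_{F'(a)F'(b)}$, with probabilities chosen exactly as in the single‑cycle computation, so that every pair on $f$ — and every pair with one endpoint on $f$ and one in $G'$ — has its $G$‑distance preserved up to a universal constant in expectation, while no edge of $Q$ is stretched. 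The interior vertices of $Q$ have degree $2$, so they impose no star‑shaped constraint beyond being placed along a path.

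The main obstacle — and the reason a naive attachment fails — is reconciling the star‑shaped requirement with this recursion. Attaching the arms for $Q$ alters the union of edge‑paths emanating from $F'(a)$ and $F'(b)$, and in general that union need not remain a subdivision of a star unless one controls the structure of $T'$ near those two vertices. I therefore expect to carry a strengthened inductive invariant tied to a designated root edge $\{a,b\}$ of the current subgraph: the map sends $a,b$ to the two ends of a path of length $d_G(a,b)$ in $T$, the induced ``star'' at $a$ (resp.\ $b$) is a star subdivision \emph{centered at} $F(a)$ (resp.\ $F(b)$) having that path as one of its arms, and nothing else of $T$ attaches along the interiors of those arms — so that the parent's attachment can be performed pendantly without creating a second branch vertex. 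Verifying that this invariant is preserved under both the block‑gluing step and the leaf‑face attachment, while simultaneously keeping the construction $1$‑Lipschitz and extracting a single universal constant $K$ in the co‑Lipschitz bound, is the technical heart of the argument; the remaining verifications are a routine case analysis of distances within and across the removed face.
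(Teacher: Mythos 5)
Your overall plan --- build the $2$-connected graph face by face (your leaf-face removal is just the Charikar--Sahai ear decomposition run in reverse) and extend the random tree at each step by flattening the new cycle in one of two ways --- is indeed the approach the paper takes, but the two ingredients that make it actually work are missing, and they are the technical heart rather than routine verifications. First, the quantitative step. Your claim that the local two-choice randomization can be tuned ``so that every pair on $f$ --- and every pair with one endpoint on $f$ and one in $G'$ --- has its $G$-distance preserved up to a universal constant in expectation'' is precisely what must be proved, and it fails as stated when the face is \emph{flat}, i.e.\ when $\len(Q)$ is comparable to $\len(c)=d_G(a,b)$. In the tree $T'$ the realized distance $d_{T'}(F'(a),F'(b))$ may be much smaller than $d_G(a,b)$ (the inductive hypothesis is only an expected lower bound), and for pairs straddling the chord the flattening analysis produces additive error terms of order $d_G(a,b)$; when these are not negligible compared to the distances being preserved, the per-step constant degrades, and since the inductive hypothesis must be reproduced \emph{with the same constant} at every step, the induction does not close with a uniform $K$. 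The paper's fix is a one-time preprocessing: the $\alpha$-slack reduction of Lemma~\ref{lem:slack} with $\alpha=160$ (costing a factor $160$ once), after which every attached path satisfies $\len(P)\geq 160\, d_G(u,v)$ as in \eqref{eq:lem:dist:slack}; this is exactly what lets Lemma~\ref{lem:distortion} absorb the $O(d_G(u,v))$ error terms (via Observation~\ref{obs:simple} for points near the attachment and Lemma~\ref{lem:flat} with $(1/6,1/16)$-apart anchors for points far from it) and output the \emph{same} constant $1/6$ it assumed. Your proposal has no analogue of this slack mechanism, and without one the asserted constant-factor preservation is unsupported.

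Second, the star-shaped invariant you propose is too strong to maintain. You require that ``nothing else of $T$ attaches along the interiors of those arms'' at the designated edge $\{a,b\}$; but in this construction new faces are glued \emph{along} the tree path $P^{T'}_{F'(a)F'(b)}$ and beyond it, so branches inevitably accumulate along arms incident to a vertex that lies on several boundary edges, and you give no argument that your invariant survives either the face attachment or the block gluing. The paper maintains a genuinely weaker invariant --- for every outer-face edge $(x,y)$ at least one endpoint is ``good'' in the sense of \eqref{eq:def:good} --- and establishing that a good endpoint always exists is not bookkeeping: it uses outerplanarity through a $K_4$-minor argument (Lemma~\ref{lem:oneend}), together with the anchor-placement guarantees of Corollary~\ref{cor:choice} (conditions (a)--(c), including the injectivity perturbation) to propagate goodness through the extension. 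Without identifying this weaker invariant and the combinatorial lemma behind it, the case analysis in your last paragraph cannot be carried out, so both the distortion bound and the star-shaped property remain unproven in your sketch.
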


We begin by setting up the notations and definitions needed to prove Theorem~\ref{thm:starpreserve}.

\subsection {Notation and definitions}

For a graph $G=(V,E)$, and $v\in V$, we use the notation $N_G(v)=\{u:(u,v)\in E\}$ to denote the set neighbors of $v$ in the graph $G$.
For a path $P$, we define the cycle $C(P,\ell)$ as the cycle obtained by connecting the endpoints of $P$ with an edge of length $\ell$. The length of the cycle $C$, is given by $\len(C)=\len(P)+\ell$. In this section, it is  helpful to think of cycles as continuous cycles and $V(P)\subseteq C$ as points on the cycle.

For a cycle $C$ and a point $p$ on the cycle we define $\mathsf{flat}(C,p)$ to be the path where $p$ is one end point and $x\in C$ is mapped to the point at distance $d_C(p,x)$ from $p$ on the path. Moreover for points $x,y\in C$ we use $d_{\mathsf{flat}(C,p)}(x,y)=|d_C(x,p)-d_C(y,p)|$ to denote the distance between $x$ and $y$ on the path $\mathsf{flat}(C,p)$.
See Figure~\ref{fig:flat} for an example.
\begin{figure}[htbp]
\begin{center}
  \includegraphics[height=1.5in]{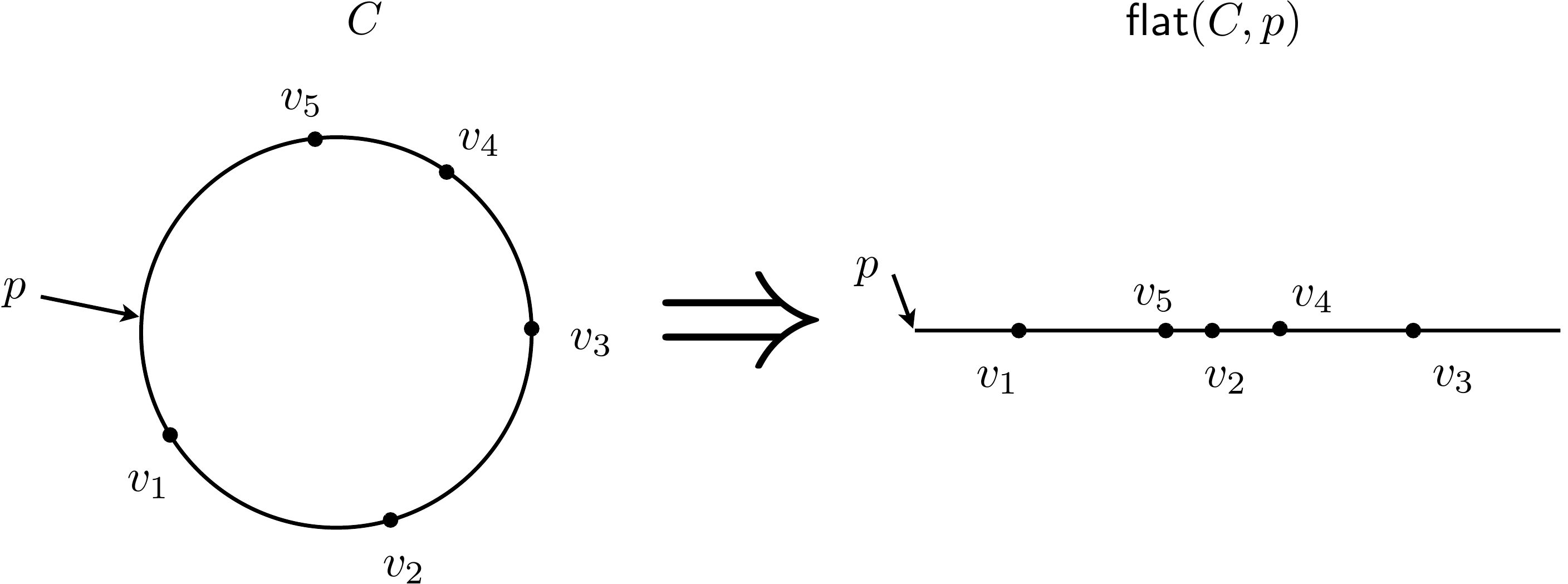}
\caption{The flattening of the cycle $C$.}
\label{fig:flat}
\end{center}
\end{figure}

For two paths $P=(u_1,\ldots,u_m)$ and $Q=(v_1,\ldots, v_n)$ with the same length, we define the  \emph{glue}ing of $P$ and $Q$ as follows. We first identifying  the end points $u_1$ with $v_1$, and $u_m$ with $v_n$ to specify the end points of the resulting path. Then we map each point $x\in V(P)\cup V(Q)$ so that the distance between $x$ and the end points of the path is preserved.

\begin{figure}[htbp]
\begin{center}
  \includegraphics[height=2in]{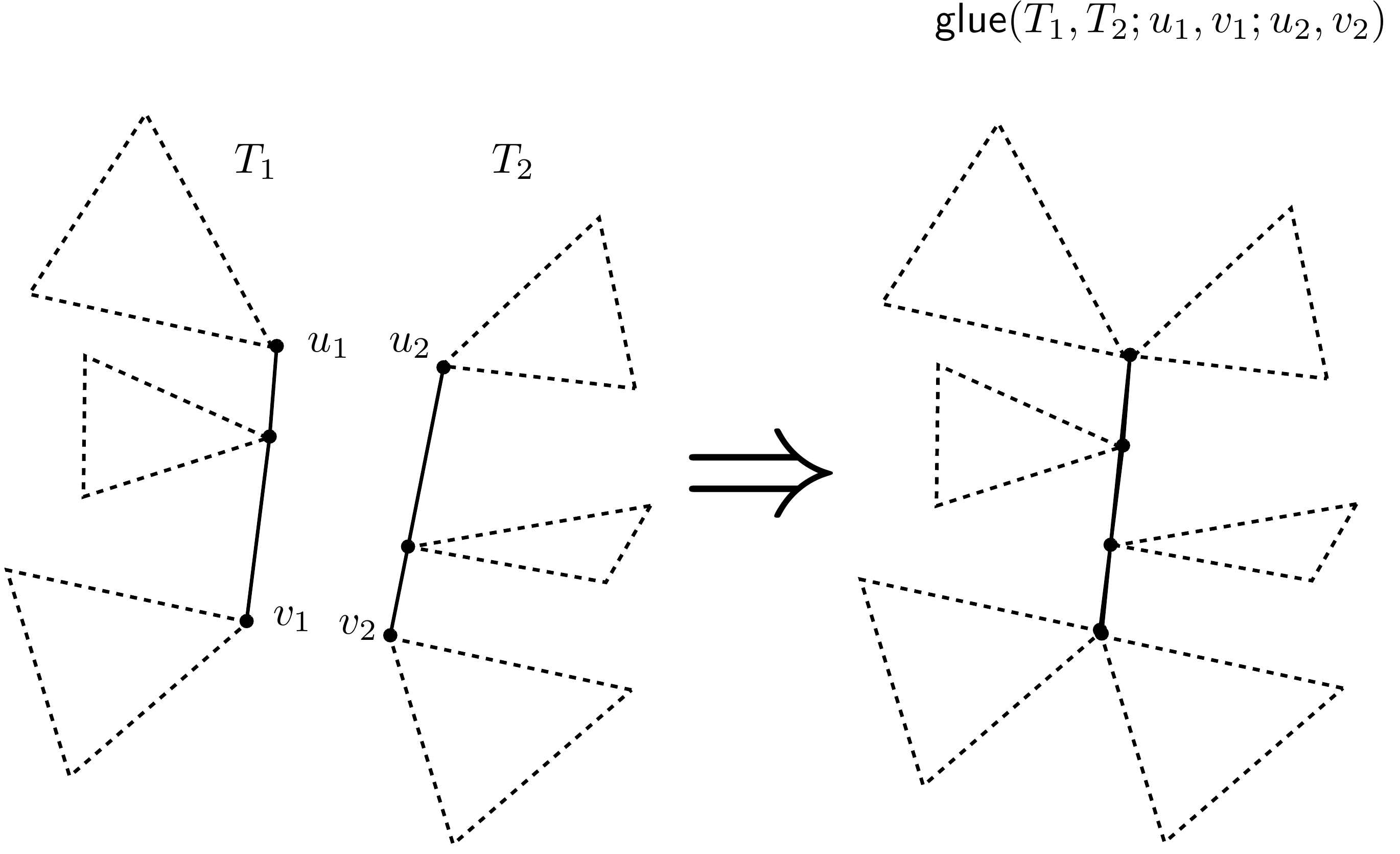}
\caption{Gluing of the trees $T_1$ and $T_2$.}
\label{fig:ext}
\end{center}
\end{figure}

Finally, for two given trees, $T_1$ and $T_2$, and pairs of vertices $u_1,v_1\in V(T_1)$ and $u_2,v_2\in V(T_2)$ such that $d_{T_1}(u_1,v_1)=d_{T_2}(u_2,v_2)$, we define $\ext(T_1,T_2;u_1,v_1;u_2,v_2)$ as the tree resulting from gluing  the trees $T_1$ and $T_2$ on the unique path between $u_1$ and $v_1$ in $T_1$ and $u_2$ and $v_2$ in $T_2$. See Figure~\ref{fig:ext}.

\subsection{Framework}\label{subsection:alg}

    Our approach to Theorem~\ref{thm:starpreserve} employs the framework of Charikar and Sahai (see Theorem~4 in \cite{CS01}).
Any outerplanar graph can be constructed by
considering a sequence of paths $P_i$, and then doing the following:
Start with $G_1 = P_1$. At step $i$, we consider some
edge $e_i = (u_i, v_i)$ on the outer face of $G_i$, and obtain $G_{i+1}$
by either attaching the endpoints of $P_i$ to $u_i$ and $v_i$, or by attaching only one endpoint of $P_i$ to
either $u_i$ or $v_i$.

In this section we only consider biconnected outerplanar graphs (so the endpoints of $P_i$ are always attached to $u_i$ and $v_i$), since we can simply take the embedding of biconnected components of a graph that are connected by a single vertex into trees, and glue the trees on the image of the common vertex to obtain an embedding for the whole graph.

We also use the concept of a slack structure
\cite{GNRS04}. We say that an outerplanar graph has an $\alpha$-slack
structure if it can be built out of paths $P_i$ such that the length
of any path $P_i$ which attaches to both endpoints of an edge
$e_i$ is at least $\alpha$ times the length of $e_i$. The following lemma is a straightforward  generalization of a fact from \cite{GNRS04}, where it is proved
for $\alpha=2$.

\begin{lemma}[\cite{GNRS04}]  \label{lem:slack}
Consider any $\alpha \geq1$.  Given an outerplanar metric  graph $G =
(V,E,\len_G)$, there is an outerplanar metric graph
$H = (V,E',\len_H)$ with $E'\subseteq E$, and such that $H$ has an
$\alpha$-slack structure.  Furthermore, $d_G \geq d_H \geq  (1/\alpha)d_G$,
and for every $(u,v)\in E'$,
\begin{equation} \label{eq:lem:slack}
d_H(u,v)=\len_H(u,v).
\end{equation}
\end{lemma}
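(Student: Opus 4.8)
The plan is to build $H$ greedily by processing the construction sequence $P_1, P_2, \ldots$ of $G$ and, whenever a path $P_i$ is ``too short'' relative to the edge $e_i = (u_i, v_i)$ it is attached to, simply \emph{delete $P_i$ entirely} (keeping $e_i$ in the graph) rather than keeping it; when $P_i$ is long enough, i.e. $\len_G(P_i) \ge \alpha \cdot \len_G(e_i)$, we keep it but may need to ``stretch'' its edges. More precisely, I would argue by induction on the number of construction steps that one can maintain a graph $G_i' = (V_i, E_i', \len_{H})$ on the same vertex set $V_i$ as $G_i$, with $E_i' \subseteq E_i$, such that (a) $G_i'$ has an $\alpha$-slack structure witnessed by the retained paths, (b) for every retained edge $(u,v) \in E_i'$ we have $\len_{H}(u,v) = d_{G_i'}(u,v)$ (the reducedness condition~\eqref{eq:lem:slack}), and (c) $d_{G_i} \ge d_{G_i'} \ge (1/\alpha) d_{G_i}$. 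The first step is to fix notation for the Charikar--Sahai/GNRS decomposition and set up this induction hypothesis carefully.

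The inductive step splits on the two cases in the construction. If $P_i$ attaches only one endpoint to $G_i$, this is a tree-like attachment and nothing needs to change: retain $P_i$ with its original lengths; all three invariants are trivially preserved since no cycle is created. If $P_i$ attaches both endpoints to $e_i = (u_i, v_i)$, compare $\len_G(P_i)$ with $\alpha \cdot d_{G_i'}(u_i, v_i)$ (note we compare against the \emph{current} distance in $G_i'$, which by (c) is within a factor $\alpha$ of $d_{G_i}(u_i,v_i)$, and we should be a little careful here). If $\len_G(P_i)$ is large enough, keep $P_i$ but possibly scale its edge lengths up uniformly so that its total length becomes exactly $\alpha \cdot d_{G_i'}(u_i,v_i)$ if it was between $d_{G_i'}(u_i,v_i)$ and $\alpha\cdot d_{G_i'}(u_i,v_i)$; this is the key trick from~\cite{GNRS04} for general $\alpha$. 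Otherwise (if $P_i$ is shorter than $d_{G_i'}(u_i,v_i)$, in particular shorter than $\alpha$ times it), simply discard $P_i$ and keep $e_i$: since $e_i$ was already in $G_i'$ and the discarded path carried no shortest paths between its internal vertices that are shorter than going around via $e_i$... — actually here one must be careful about internal vertices of $P_i$; the cleanest fix is that the vertices internal to $P_i$ that we ``lose'' get attached via the retained structure, but since they are new vertices introduced by $P_i$, discarding $P_i$ means they become isolated, which is not allowed. So the correct move in the short case is to keep $P_i$ but scale it \emph{up} to length exactly $\alpha \cdot d_{G_i'}(u_i, v_i)$; this only increases distances locally, and one must then verify distances elsewhere are not increased by more than a factor $\alpha$ and not decreased at all.

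The main obstacle, and where I expect the real work to be, is verifying invariant (c) — the two-sided distance bound $d_{G_i} \ge d_{G_i'} \ge (1/\alpha) d_{G_i}$ — under the rescaling operation, because rescaling one path's lengths affects shortest paths globally, not just locally, and these perturbations could in principle compound across the $O(|E|)$ construction steps. The resolution is the standard observation that in an outerplanar (indeed series-parallel) graph built by this process, a shortest path between two vertices of $G_i$ either avoids a given attached path $P_j$ entirely or traverses it monodromically from one attachment point to the other, so the only effect of rescaling $P_j$ up to $\alpha$ times its ``chord length'' is to make traversing $P_j$ cost at most $\alpha$ times what the chord $e_j$ costs — hence every distance goes up by at most a factor $\alpha$, and never down (we only ever increase lengths, never decrease, and we never delete an edge that was carrying a unique shortest path). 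I would make this precise by an inductive claim that $d_{G_i'}(x,y) \le \alpha\, d_{G_i}(x,y)$ and $d_{G_i'}(x,y) \ge d_{G_i}(x,y)$ — wait, the lemma wants $d_G \ge d_H$, so in fact we must \emph{not} increase distances; re-examining, the correct operation is to scale edge lengths \emph{down} so that short paths $P_i$ become even shorter is wrong too. The right reading: keep all lengths as in $G$ except \emph{decrease} the lengths of edges so that long paths are unaffected and the slack structure requires the \emph{edge} $e_i$ to be short relative to $P_i$, which we achieve by shrinking $e_i$; so the operation is to shrink retained edges, giving $d_H \le d_G$ immediately, and the lower bound $d_H \ge (1/\alpha) d_G$ follows because we shrink each edge by at most a factor $\alpha$ and shortest paths are monodromic through attached components. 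I would close by checking~\eqref{eq:lem:slack} holds for the retained edges after taking the shortest-path closure (reducing the length function), which is automatic, and noting the $\alpha=2$ case recovers the cited fact from~\cite{GNRS04}.
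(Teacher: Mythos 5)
Your final plan --- keep every edge of $G$ and merely shrink the base edge $e_i$ whenever $\len(P_i)<\alpha\,\len(e_i)$ --- cannot prove the lemma, and the obstruction is structural, not a matter of constants. Take the outerplanar graph on $\{u,a,x,v\}$ with edges $(u,v)$ and $(a,v)$ of length $1$, $(u,a)$ of length $\epsilon$, and $(a,x),(x,v)$ of length $\tfrac12$ (this is $K_4$ minus an edge, and the length function is reduced). In any construction sequence realizing all five edges, the ear $a\!-\!x\!-\!v$ is attached to the base edge $(a,v)$, and the remaining two-edge ear is attached either to $(u,v)$ or again to $(a,v)$. If all edges are retained, the slack inequality for the ear through $x$, together with $\len_H(a,x),\len_H(x,v)\le\tfrac12$ (forced by \eqref{eq:lem:slack} and $d_H\le d_G$), gives $\len_H(a,v)\le 1/\alpha$; feeding this and $\len_H(u,a)\le\epsilon$ into the slack inequality for the other ear (and using \eqref{eq:lem:slack} once more in the case where $(a,v)$ is the base of both ears) forces the length of a unit-length chord below $1/\alpha$, i.e.\ $d_H<d_G/\alpha$ on that pair for small $\epsilon$. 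The variant of your rule that measures slack against the \emph{original} length of $P_i$ avoids this collapse but then $H$ simply does not have an $\alpha$-slack structure, since that is a condition on $\len_H$ of both the ear and the base edge. In general the required shrink factors compound like $\alpha^{k}$ along $k$ nested short ears, so the assertion ``we shrink each edge by at most a factor $\alpha$'' is exactly the step that fails; and the example shows that the freedom $E'\subseteq E$ is not cosmetic: any valid $H$ there must delete an edge, which your final plan never does.

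Two further problems. Your first two attempts are rightly abandoned (deleting $P_i$ destroys the vertex set; scaling ears up violates $d_H\le d_G$), but the closing claim that restoring \eqref{eq:lem:slack} ``by taking the shortest-path closure is automatic'' is not: lowering edge lengths to current distances shortens ears as well as base edges and can destroy the slack inequalities you just arranged, so reducedness must be maintained inside the induction, not patched at the end. Note also that the paper does not reprove this statement; it invokes \cite{GNRS04}, where the case $\alpha=2$ is established, and the intended route is different from yours: when $\len(P_i)<\alpha\,\len(e_i)$ one \emph{deletes the base edge} $e_i$ (all vertices survive, since $P_i$ reconnects its endpoints) and merges the face with its parent so that the remaining attachment sequence still witnesses an $\alpha$-slack structure; one then proves $d_G\le d_{H'}\le\alpha\,d_G$ by an induction over the construction in which reducedness of $G$ is what keeps nested short ears from compounding, and finally rescales all lengths by $1/\alpha$, which flips the estimate to $d_G\ge d_H\ge d_G/\alpha$ and preserves both the slack structure and \eqref{eq:lem:slack}. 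That deletion step and the accompanying distance induction --- which your appeal to ``monodromic'' shortest paths does not supply --- are the content of the lemma.
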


Thus, by incurring distortion at most $\alpha$, we may
assume that
the outerplanar graph $G$ has an $\alpha$-slack
structure. We will build our embedding inductively based
on the sequence of the paths $P_1.\ldots, P_m$ provided by Lemma~\ref{lem:slack}.

\medskip
\noindent
{\bf Random extension.}
Given an embedding of a metric graph $G$ into a random metric tree $T$, $F:V(G)\to V(T)$
and a new path $P$ attached to the points $u,v\in V(G)$, we extend the embedding of $G$ to an embedding  for $\hat G=G\cup P$ into a random tree $\hat T$, using the following operation.
Let $C=C(P,d_T(F(u),F(v)))$. 
To extend the tree $T$, we choose two \emph{anchor} points $p, q\in C$, and map the vertices of $C$ onto two paths $L=\mathsf{flat}(C,p)$ and $R=\mathsf{flat}(C,q)$.

We put $\hat T = \ext(T,L;F(u),F(v);u,v)$ with probability $1/2$ and
$\hat T= \ext(T,R;F(u),F(v);u,v)$ with
probability $1/2$.
This specifies a random mapping $\hat F : V(G) \to V(\hat T)$.
Since it will be clear from context which vertices we are gluing onto, we will use the notations $\ext(T,L)$ and $\ext(T,R)$ without specifying the vertices.
Note that the gluing can be
done if and only if $d_L(u,v)=d_R(u,v)=d_C(u,v)=d_T(F(u),F(v))$.
Moreover,
If the map $F : V(G) \to V(T)$ is $1$-Lipschitz, then so is the extension $\hat F$.

A significant difference between our construction and that of \cite{CS01} is in the way we choose the anchor points.
For our purposes, it is not enough to simply look at the $\alpha$-slack graph; we need to use the structure of the original graph when we choose the anchor points in order to maintain the star-shaped property.  The algorithm of \cite{CS01} is able to construct an embedding using only two trees, while we embed the graph into a distribution over trees.
In the next section, we prove a distortion bound for this embedding based on the distance between the anchor points in the cycle.

\subsection{Bounding the distortion}

Before we can state the main lemma of this section, we need the following definition.
For a cycle $C$, and points $u,v\in C$ we say that a pair of points $p,q\in C$ is {\em $(\alpha,\beta)$-apart with respect to
another pair $\{u,v\}$} if $d_C(p,q)=\alpha\len(C)$ and for $a\in \{u,v\}$ and $b\in \{p,q\}$:
$$
\beta\, \len(C) \leq d_C(a,b)\leq\left({\frac 1 2-\beta}\right) \len(C).
$$


We now state a lemma capturing our main inductive step.

\begin{lemma}\label{lem:distortion}
Let $G$ be a graph, $T$ be random metric tree, and let $F:V(G)\to V(T)$ be a random $1$-Lipschitz map such that $\E[d_T(F(x),F(y)] \ge d_G(x,y)/6$ for every $x,y\in V(G)$.
Let $\hat G$ be a graph constructed by attaching a path $P$ with
\begin{equation}\label{eq:lem:dist:slack}
\len(P)\geq 160\cdot d_G(u,v)
\end{equation}
 onto a pair of vertices $u,v\in V(G)$.
 Let $C=C(P,d_T(F(u),F(v)))$, and  $p,q \in C$
 be any pair of points that are $(1/6,1/16)$-apart with respect to $\{u,v\}$ in $C$, and let $\hat T$ be the random extension of $T$ by $C$ with respect to the anchor points $p$ and $q$.
The embedding $\hat F:V(\hat G)\to V(\hat T)$ is also 1-Lipschitz
and such that for all $x,y\in V(\hat G)$,
$\E[d_{\hat T}(\hat F(x),\hat F(y)] \ge d_{\hat G}(x,y)/6$.
\end{lemma}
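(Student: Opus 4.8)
The Lipschitz claim is essentially free: the extension operation $\ext(T,L)$ and $\ext(T,R)$ glues along a path of the correct length, and since $L=\mathsf{flat}(C,p)$ and $R=\mathsf{flat}(C,q)$ are obtained by flattening the cycle $C$ (which contains $P$ with its original edge lengths plus one edge of length $d_T(F(u),F(v))$), distances on $L$ and $R$ only contract relative to distances on $C$, hence relative to the graph metric $d_{\hat G}$ on the new vertices; combined with the inductive hypothesis that $F$ is $1$-Lipschitz, $\hat F$ is $1$-Lipschitz. So the content is the lower bound $\E[d_{\hat T}(\hat F(x),\hat F(y))] \ge d_{\hat G}(x,y)/6$.

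I would split the verification of the lower bound by cases according to where $x,y$ sit. \textbf{Case 1: $x,y \in V(G)$.} Here $\hat F(x)=\Phi(F(x))$ where $\Phi$ is whichever gluing map was chosen; since gluing identifies $T$ isometrically onto a subtree of $\hat T$, $d_{\hat T}(\hat F(x),\hat F(y)) \ge d_T(F(x),F(y))$, and the inductive hypothesis $\E[d_T(F(x),F(y))]\ge d_G(x,y)/6 \ge d_{\hat G}(x,y)/6$ finishes it (using $d_{\hat G}\le d_G$ on $V(G)$). \textbf{Case 2: $x,y \in V(P)\setminus\{u,v\}$, i.e.\ new vertices.} With probability $1/2$ both land on $L$ and with probability $1/2$ both on $R$; in either case their distance in $\hat T$ equals $d_L(x,y)$ or $d_R(x,y)$, i.e.\ $|d_C(x,p)-d_C(y,p)|$ or $|d_C(x,q)-d_C(y,q)|$. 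The key geometric fact is that for any two points on a cycle $C$ and any anchor $p$, $d_{\mathsf{flat}(C,p)}(x,y)\ge d_C(x,y)$ unless the geodesic from $x$ to $y$ ``wraps around'' $p$, in which case it may be as small as $0$; but averaging over the \emph{two} anchors $p,q$ that are $(1/6,1/16)$-apart guarantees that at least one of $p,q$ is not on the short arc between $x$ and $y$ (since that short arc has length $<\len(C)/2$ and $p,q$ are at distance $\tfrac16\len(C)$ apart, not both can lie inside it — this is exactly why we need the ``apart'' condition), so $\tfrac12(d_{\mathsf{flat}(C,p)}(x,y)+d_{\mathsf{flat}(C,q)}(x,y))\ge \tfrac12 d_C(x,y)$. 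Finally I must relate $d_C(x,y)$ to $d_{\hat G}(x,y)$: a shortest path in $\hat G$ between two internal path-vertices either stays inside $P$ (then $d_{\hat G}(x,y)=d_P(x,y)=d_C(x,y)$ along that arc) or it exits through $u$ and $v$ and uses a $u$–$v$ path in $G$, and the slack hypothesis \eqref{eq:lem:dist:slack}, $\len(P)\ge 160\,d_G(u,v)$, together with $d_T(F(u),F(v))\le \len(P)$ (Lipschitz) and $\E d_T(F(u),F(v))\ge d_G(u,v)/6$, pins down $\len(C)$ up to constants in terms of $\len(P)$, so that $d_C(x,y)\ge d_{\hat G}(x,y)/3$ in the worst case, giving $\ge d_{\hat G}(x,y)/6$ after the factor-$2$ loss from anchor-averaging. \textbf{Case 3: $x$ old, $y$ new (or $x=u$ or $v$).} This is the mixed case: $d_{\hat T}(\hat F(x),\hat F(y)) \ge d_{\hat T}(\hat F(x), \hat F(u)) $ is not quite right; instead one writes, for the chosen side, $d_{\hat T}(\hat F(x),\hat F(y)) = d_T(F(x),F(w))+d_{\mathsf{flat}}(w,y)$ where $w\in\{u,v\}$ is whichever of $F(u),F(v)$ is closer to the branch point, and argues that this is at least a constant times $d_{\hat G}(x,y)$ by comparing to a shortest $\hat G$-path, which must pass through $u$ or $v$.

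\textbf{Main obstacle.} The crux is Case 2 (and its interaction with Case 3): making the constants work out so that the \emph{same} constant $1/6$ propagates through the induction. The loss of a factor $2$ from averaging over the two anchors is unavoidable, so one needs $d_C(x,y)\ge d_{\hat G}(x,y)/3$ robustly, which is where the specific numbers $160$ in \eqref{eq:lem:dist:slack} and $(1/6,1/16)$ in the apartness condition are calibrated: the $1/16$ lower bound on $d_C(a,b)$ for $a\in\{u,v\},b\in\{p,q\}$ ensures the anchors are far enough from the gluing points $u,v$ that a path-vertex near $u$ or $v$ still gets a reasonable fraction of its cycle-distance realized on the flattened path, and the $160$ ensures $\len(C)\in[\len(P),\len(P)(1+1/6\cdot\text{small})]$-ish so that cycle distances and path distances agree up to a small constant. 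I would handle this by first proving the clean geometric averaging inequality for cycles abstractly (two anchors at distance $\alpha\len(C)$ with $\alpha\le 1/2$ give $\ge d_C(x,y)/2$ in expectation, for \emph{all} pairs $x,y$ on the cycle including $u,v$), then separately bounding $d_C(x,y)/d_{\hat G}(x,y)$ below using only slackness, and finally combining — keeping careful track that the worst case is a path-vertex at the midpoint of $P$ paired with $u$, where the $\hat G$-distance is the full half-path but the cycle only gives that minus a $u$–$v$ detour of length $\le 2 d_G(u,v) \le \len(P)/80$.
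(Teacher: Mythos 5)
Your overall architecture (case split old/old, new/new, mixed; averaging the two flattenings; using the $160$-slack to compare cycle and graph distances; propagating the constant $1/6$) is the same as the paper's, but the central geometric claim you rely on in Case 2 is false as stated, in two ways. First, flattening at $p$ does not preserve $d_C(x,y)$ merely because $p$ avoids the short $x$--$y$ arc: since $\mathsf{flat}(C,p)$ folds the cycle at the antipode $\bar p$ as well, the distance can collapse to $0$ when the $x$--$y$ geodesic passes through $\bar p$ (take $x,y$ straddling $\bar p$ symmetrically). Second, the quantitative consequence you draw, $\tfrac12\bigl(d_{\mathsf{flat}(C,p)}(x,y)+d_{\mathsf{flat}(C,q)}(x,y)\bigr)\ge\tfrac12 d_C(x,y)$, is wrong: with $p$ at position $0$, $q$ at $\len(C)/6$, $x$ at $-\len(C)/6$ and $y$ at $\len(C)/3$, both flattened distances equal $\len(C)/6$ while $d_C(x,y)=\len(C)/2$, so the average is only $\tfrac13 d_C(x,y)$. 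This $\tfrac13$ is exactly what the paper proves (Lemma~\ref{lem:flat}: the sum is at least $4\alpha\,d_C(x,y)$ with $\alpha=1/6$), and the whole constant bookkeeping in the lemma is calibrated to a $1/3$, not $1/2$, averaging loss; your budget of ``factor $2$ from averaging times $d_C\ge d_{\hat G}/3$'' does not survive the correction.

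There is a second genuine gap: the deterministic comparison $d_C(x,y)\ge d_{\hat G}(x,y)/3$ you invoke for new/new pairs is false in a fixed realization. If $x$ and $y$ lie on $P$ adjacent to $u$ and $v$ respectively and the shortest $\hat G$-path uses the old graph, then $d_{\hat G}(x,y)\approx d_G(u,v)$ while $d_C(x,y)\approx d_T(F(u),F(v))$, which for a particular sample of $T$ can be arbitrarily small (the hypothesis only controls its expectation). No bound through $d_C(x,y)$ alone can work here. The paper's proof handles precisely this regime (Case III with $d_{\hat G}(x,y)\le\len(P)/16$, and similarly Case II with $d_{\hat G}(u,x)\le\len(C)/16$) by exploiting the $1/16$ part of the $(1/6,1/16)$-apartness: both points then lie within $\beta\len(C)$ of $u$ or $v$, so by Observation~\ref{obs:simple} the flattening is \emph{exact} there, giving $d_{\hat T}(\hat F(x),\hat F(y))=d_{\hat G}(x,u)+d_T(F(u),F(v))+d_{\hat G}(v,y)$ pointwise, after which one takes expectations and uses $\E\,d_T(F(u),F(v))\ge d_G(u,v)/6$. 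Your sketch uses the expectation hypothesis only to ``pin down $\len(C)$,'' which does not substitute for this exactness-plus-expectation step; without it (and without the corrected averaging constant and the attendant subcase where the $\hat G$-geodesic crosses the edge $(u,v)$ but $d_{\hat G}(x,y)$ is large, where the paper absorbs the $-d_{\hat G}(u,v)$ loss via the $160$-slack), the induction with constant $1/6$ does not close.
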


\medskip
We will use mainly the following two properties of $(\alpha,\beta)$-apart pairs in the proof of Lemma~\ref{lem:distortion}.

\begin{observation}\label{obs:simple}
For any $\beta \in [0,1/2]$, the following holds.  Suppose $C$ is a cycle and $a,b \in C$ are such that $$\beta\, \len(C) \leq d_C(a,b)\leq\left({\frac 1 2-\beta}\right) \len(C).$$
 Then for any $x,y \in C$ with $\max\{d_C(x,a), d_C(y,a)\} \leq \beta \len(C)$, we have
$$
d_{\mathsf{flat}(C,b)}(x,y)=d_{C}(x,y).
$$
\end{observation}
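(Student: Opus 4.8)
The statement to prove is Observation~\ref{obs:simple}: on a cycle $C$ with two points $a,b$ satisfying $\beta\len(C) \le d_C(a,b) \le (\tfrac12-\beta)\len(C)$, if $x,y \in C$ both lie within distance $\beta\len(C)$ of $a$, then $d_{\mathsf{flat}(C,b)}(x,y) = d_C(x,y)$.

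\medskip

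\textbf{Approach.} The plan is to show that $x$ and $y$ lie on the same side of $b$ on the cycle, i.e., in the same ``arc'' when we cut $C$ at $b$, and that the shorter $C$-arc between $x$ and $y$ is the one that does not pass through $b$. Recall that $\mathsf{flat}(C,b)$ opens the cycle at $b$ and places each point $z$ at position $d_C(b,z) \in [0, \len(C)/2]$ along the resulting path, so $d_{\mathsf{flat}(C,b)}(x,y) = |d_C(b,x) - d_C(b,y)|$. The claim $d_{\mathsf{flat}(C,b)}(x,y) = d_C(x,y)$ is therefore equivalent to the assertion that going from $x$ to $y$ along the cycle while avoiding $b$ is a \emph{shortest} path, and that its length equals $|d_C(b,x)-d_C(b,y)|$.

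\medskip

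\textbf{Key steps.} First I would fix an orientation of $C$ and coordinates: identify $C$ with $\mathbb{R}/\len(C)\mathbb{Z}$, place $b$ at $0$, and let $a$ sit at coordinate $t := d_C(a,b) \in [\beta\len(C), (\tfrac12-\beta)\len(C)]$ (choosing the orientation so $a$'s coordinate is in $[0,\len(C)/2]$). Next, since $d_C(x,a) \le \beta\len(C)$ and $d_C(y,a)\le \beta\len(C)$, the coordinates of $x$ and $y$ lie in the interval $[t - \beta\len(C),\, t+\beta\len(C)]$; here I need the small subtlety that the cycle-distance ball of radius $\beta\len(C) \le \len(C)/2$ around $a$ really is this coordinate interval (true because the radius is at most $\len(C)/2$, so no wraparound collision occurs). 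Using $t \ge \beta\len(C)$ we get the left endpoint $t-\beta\len(C) \ge 0$, and using $t \le (\tfrac12-\beta)\len(C)$ we get the right endpoint $t+\beta\len(C) \le \len(C)/2$. Hence $x,y$ both have coordinates in $[0,\len(C)/2]$, so $d_C(b,x)$ and $d_C(b,y)$ are exactly those coordinates. Finally, since both coordinates lie in an interval of length $\len(C)/2$ not wrapping past $b$, the arc between $x$ and $y$ inside $[0,\len(C)/2]$ has length $|\text{coord}(x)-\text{coord}(y)| \le \len(C)/2$, so it is the shorter arc and equals $d_C(x,y)$; it also equals $|d_C(b,x)-d_C(b,y)| = d_{\mathsf{flat}(C,b)}(x,y)$, completing the proof.

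\medskip

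\textbf{Main obstacle.} There is nothing deep here; the only thing requiring care is the bookkeeping with the cyclic metric --- making sure that the bounds $\beta\len(C) \le d_C(a,b) \le (\tfrac12-\beta)\len(C)$ are used exactly to keep $x,y$ within the half-cycle arc anchored away from $b$, and that the various ``$d_C = $ coordinate difference'' identifications are valid precisely because all the relevant distances stay $\le \len(C)/2$. I would be slightly careful about the degenerate/boundary cases $\beta = 0$ or $d_C(a,b)$ hitting an endpoint of its allowed range, but these do not break the argument.
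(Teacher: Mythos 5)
Your proof is correct: the coordinatization of $C$ with $b$ at the origin, the containment of the ball $B(a,\beta\len(C))$ in the half-arc $[0,\len(C)/2]$ via the two hypotheses on $d_C(a,b)$, and the resulting identification of cycle distances with coordinate differences is exactly the routine verification that the paper leaves implicit (the observation is stated there without proof). Nothing is missing; the boundary cases you flag are indeed harmless.
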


\begin{lemma}\label{lem:flat}
Let $C$ be a cycle. For  $\alpha\in[0,1/4]$ and  $p,q\in C$ such that $d_C(p,q)=\alpha \len(C)$, the following holds. For any pair of vertices $x,y\in C$,
\begin{equation} \label{eq:lem:flat}
d_{\mathsf{flat}(C,p)}(x,y)+d_{\mathsf{flat}(C,q)}(x,y)\geq 4\alpha d_C(x,y).
\end{equation}
\end{lemma}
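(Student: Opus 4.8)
The plan is to reduce the statement to a one-dimensional calculation by parametrizing the cycle by arc length. Write $\len(C) = 1$ after normalizing (the inequality is scale-invariant since both sides are homogeneous of degree one in lengths, and $d_C$ scales linearly). Place $p$ at position $0$ and $q$ at position $\alpha$ on the circle $\mathbb{R}/\mathbb{Z}$, so $d_C(x,y) = \min(|x-y|, 1-|x-y|)$ for positions $x,y$. Recall that $d_{\mathsf{flat}(C,p)}(x,y) = \big| d_C(p,x) - d_C(p,y)\big|$, and note $d_C(p,x) = \min(x, 1-x)$ is the ``tent'' function centered at $0$, which is piecewise linear with slope $\pm 1$.

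First I would reduce to the case where $x$ and $y$ are antipodal-free and handle the generic configuration: fix $x,y \in \mathbb{R}/\mathbb{Z}$ and let $\delta = d_C(x,y) \in [0,1/2]$. The quantity $d_{\mathsf{flat}(C,p)}(x,y)$ equals $|h_p(x) - h_p(y)|$ where $h_p(t) = \min(t,1-t)$ measures distance-to-$p$. Since $h_p$ is $1$-Lipschitz, $d_{\mathsf{flat}(C,p)}(x,y) \le \delta$ always; the content is the lower bound on the sum. The key observation is that $h_p$ fails to ``see'' the separation $\delta$ between $x$ and $y$ only when the geodesic arc from $x$ to $y$ straddles $p$ or straddles the antipode $1/2$ of $p$ — in those cases $h_p$ is non-monotone along the short arc and $|h_p(x)-h_p(y)|$ can be as small as $0$. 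Likewise $h_q$ fails only near $q$ and near the antipode $q + 1/2$. So the plan is: show that the ``bad zones'' for $p$ (an interval around $0$ and around $1/2$, each of total length controlled by $\delta$) and the bad zones for $q$ (intervals around $\alpha$ and $\alpha+1/2$) are disjoint enough that at least one of $h_p$, $h_q$ is monotone along the short arc from $x$ to $y$ and hence recovers the full $\delta$; and in the remaining overlap region give a quantitative bound. More precisely, if the short arc from $x$ to $y$ avoids $\{0, 1/2\}$ then $d_{\mathsf{flat}(C,p)}(x,y) = \delta = d_C(x,y)$, which already beats $4\alpha\delta$ since $\alpha \le 1/4$; symmetrically for $q$. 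The only problematic case is when the short arc from $x$ to $y$ contains a point of $\{0,1/2\}$ \emph{and} a point of $\{\alpha, \alpha+1/2\}$; since consecutive such points are at distance either $\alpha$ or $1/2-\alpha$, the arc has length $\ge \min(\alpha, 1/2-\alpha) = \alpha$ when it contains one of each (as $\alpha \le 1/4$), so $\delta \ge \alpha$ in that regime. Then I would compute directly: if the arc contains, say, $0$ at fractional position $s\delta$ from $x$ (so $0 \le s \le 1$), then $d_{\mathsf{flat}(C,p)}(x,y) = |{-}(s\delta) \cdot (\pm 1) + \cdots|$ — working out the tent function, $d_{\mathsf{flat}(C,p)}(x,y) = |\delta - 2\min(s\delta, \text{dist from } 0 \text{ to far endpoint})|$, which is at least $\delta - 2\alpha$ using that the other special point forces the arc structure; a parallel bound holds for $q$, and adding gives at least $2(\delta - 2\alpha) \ge 2\delta - 4\alpha \ge 4\alpha\delta$ when... and here is where I must be careful with the arithmetic.

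The main obstacle I anticipate is precisely this last quantitative case analysis: nailing down, uniformly over the position of $x,y$ relative to $\{0,\alpha,1/2,\alpha+1/2\}$, a clean lower bound of the form $d_{\mathsf{flat}(C,p)}(x,y) + d_{\mathsf{flat}(C,q)}(x,y) \ge 4\alpha \, d_C(x,y)$. The cleanest route is probably to prove the sharper statement that the sum is always at least $2 d_C(x,y) - 4 d_C(p,q) + (\text{correction})$ or, alternatively, to argue by a direct computation of $h_p(x)-h_p(y)$ and $h_q(x)-h_q(y)$ as affine functions of the arc-length coordinates on each of the at most four linear pieces the two endpoints can fall into, and check the inequality on each piece (it is affine in each piece, so it suffices to check at the breakpoints). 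Since the inequality degrades to $0 \cdot d_C(x,y)$ as $\alpha \to 0$, and the factor $4$ is not tight, there should be enough slack; the bookkeeping of which of the four special points lies on the short $x$--$y$ arc is the part requiring genuine care. I would organize it by: (i) if $0,1/2$ both miss the short arc, done via $h_p$; (ii) if $\alpha, \alpha+1/2$ both miss it, done via $h_q$; (iii) otherwise $\delta = d_C(x,y) \ge \alpha$ and I bound each of the two flat-distances below by $\delta - 2\alpha \ge \delta(1 - 2\alpha/\delta)$, but since $\delta \ge \alpha$ only gives $\ge -\delta$, I instead use $d_{\mathsf{flat}(C,p)}(x,y) \ge |\delta - 2t_p|$ where $t_p \le d_C(p,q) = \alpha$ or $t_p \le 1/2 - \alpha$ depending on which special point is hit, and combine the two; verifying $|\delta - 2t_p| + |\delta - 2t_q| \ge 4\alpha\delta$ given $t_p + t_q \le 1/2$ and the geometric constraints is the crux, done by checking the vertices of the resulting polytope in $(\delta, t_p, t_q)$.
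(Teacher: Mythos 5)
Your cases (i) and (ii) match the paper's first case exactly (if neither the anchor nor its antipode lies on the short $x$--$y$ arc, the flat distance equals $d_C(x,y)\geq 4\alpha\,d_C(x,y)$), but the remaining case --- which you yourself flag as the crux --- is left genuinely open, and the specific routes you sketch for it do not work. The per-term bound $\delta-2\alpha$ you propose gives a total of $2\delta-4\alpha$, which is $0$ at $\alpha=1/4,\ \delta=1/2$ while the target is $4\alpha\delta=1/2$; and the polytope you describe, with only the constraint $t_p+t_q\leq 1/2$, contains the point $t_p=t_q=\delta/2$ where $|\delta-2t_p|+|\delta-2t_q|=0$, so checking its vertices would refute rather than prove the inequality. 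The constraint you derived from the spacing of the special points ($\delta\geq\alpha$) is not the useful one.

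The missing idea is the following. In the bad case there are $p'\in\{p,\bar p\}$ and $q'\in\{q,\bar q\}$ lying on the same shortest $x$--$y$ arc (note $\mathsf{flat}(C,p)$ is isometric to $\mathsf{flat}(C,\bar p)$, since $h_{\bar p}=\tfrac12\len(C)-h_p$, and similarly for $q$). Writing $t_p=d_C(p',x)$ and $t_q=d_C(q',x)$, the two flat distances are exactly $|\delta-2t_p|$ and $|\delta-2t_q|$, and the triangle inequality gives
\begin{equation*}
|\delta-2t_p|+|\delta-2t_q|\ \geq\ 2\,|t_p-t_q|\ =\ 2\,d_C(p',q').
\end{equation*}
Since $d_C(p',q')\in\{\alpha\len(C),(\tfrac12-\alpha)\len(C)\}$ and $\alpha\leq 1/4$, we get $d_C(p',q')\geq\alpha\len(C)$, and then $2\alpha\len(C)\geq 4\alpha\,d_C(x,y)$ because $d_C(x,y)\leq\len(C)/2$. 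This is precisely the constraint $|t_p-t_q|\geq\alpha\len(C)$ (not $t_p+t_q\leq\tfrac12\len(C)$) that your polytope argument would have needed; without it, the case analysis you outline cannot close. So the proposal is on the right track structurally but has a real gap at the decisive step.
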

\begin{proof}
We divide the analysis into two cases.
If neither $p$ or its antipodal point $\bar p$ (the point at distance $\len(C)/2$ from $p$ on the cycle) lie on a shortest path between $x$ and $y$ then a simple application of Observation~\ref{obs:simple} implies that
$$
d_{\mathsf{flat}(C,p)}(x,y)+d_{\mathsf{flat}(C,q)}(x,y)\geq d_{\mathsf{flat}(C,p)}(x,y)= d_C(x,y)\geq 4\alpha d_C(x,y),
$$
and an analogous inequality holds if neither $q$ or it antipodal point $\bar q$
lie on a shortest path between $x$ and $y$.

Next, suppose that $p'\in\{p,\bar p\}$ and $q'\in \{q,\bar q\}$ lie on the same shortest path between $x$ and $y$.
 It is easy to check that ${\mathsf{flat}(C,p)}$ is isometric to ${\mathsf{flat}(C,p')}$
 and ${\mathsf{flat}(C,q)}$ is isometric to ${\mathsf{flat}(C,q')}$.
 We have $d_C(p',q')\in \{\alpha \len(C),(\frac 12 -\alpha)\len(C)\}$ and $\alpha\in[0,1/4]$, therefore $d_C(p',q')\geq \alpha\len(C)$.
Since both $p'$ and $q'$ are on the same shortest path between $x$ and $y$ we can write,
\begin{align*}
d_{\mathsf{flat}(C,p)}(x,y)+d_{\mathsf{flat}(C,q)}(x,y)&= d_{\mathsf{flat}(C,p')}(x,y)+d_{\mathsf{flat}(C,q')}(x,y)\\
&=|d_C(p',x)-d_C(p',y)|+|d_C(q',y)-d_C(q',x)|\\
&\geq|d_C(p',x)-d_C(q',x)+d_C(q',y)-d_C(p',y)|\\
&=|d_C(p',x)-d_C(q',x)|+|d_C(q',y)-d_C(p',y)|\\
&=2d_C(p',q')
\geq 2\alpha \len(C)
\geq 4\alpha d_C(x,y). \qedhere
\end{align*}
\end{proof}


\medskip
\begin{proof}[Proof of Lemma~\ref{lem:distortion}]
Since $F$ is $1$-Lipschitz and the random extension preserves the 1-Lipschitz condition,  $\hat F$ is also $1$-Lipschitz.
We divide the analysis of the expected contraction of the pairs $x,y\in V(\hat G)$ into three cases.

\medskip
\noindent
{\bf Case I. $x,y\in V(G)$:} In this case,
$$\E[d_{\hat T}(\hat F(x),\hat F(y))]=\E[d_{T}(F(x),F(y))]\geq  \frac 1 6d_{G}(x,y)=\frac 1 6d_{\hat G}(x,y).$$

\medskip
\noindent
{\bf Case II. $x\in V(P)$ and $y\in V(G)$:}
Observe that a shortest path in $\hat G$ connecting $x$ to $y$ must pass through either $u$ or $v$. Suppose, without loss of generality that
$d_{\hat G}(x,y)=d_{P}(x,u)+ d_G(u,y)$.
Let $w\in V(T)$  be the closest vertex to $F(y)$ (with respect to $d_T$)
from the unique path connecting $F(u)$ and $F(v)$ in $T$.
In this case by \eqref{eq:lem:dist:slack}, we have
\[ d_T(F(u),w)\leq d_T(F(u),F(v))\leq d_G(u,v) \leq \len(C)/160. \]
Suppose first that $d_{\hat G}(u,x)\leq \len(C)/16$. Since $p$ and $q$ are $(1/6,1/16)$-apart,  Observation~\ref{obs:simple} implies that
\begin{equation}\label{eq:lem:dist:I}
d_{\hat T}(\hat F(x),\hat F(y))=
d_{C}(x,u)+d_{T}(F(u),w)+d_{T}(w,F(y))
=d_{\hat G}(x,u)+d_{T}(F(u),F(y)).
\end{equation}
Suppose next that $d_{\hat G}(u,x)> \len(C)/16$. Denote by $w'\in C$
the point with $d_C(u,w')=d_T(F(u),w)$.
Lemma~\ref{lem:flat} implies
\begin{align*}
\E[d_{\hat T}(\hat F(x),\hat F(y))\mid F]&= \E[d_{\hat T}(\hat F(x),w)\mid F]+d_T(w,F(y))\\
&\overset{\eqref{eq:lem:flat}}\geq \frac 1 3 d_{C}(x,w')+d_T(w,F(y))\\
&\geq \frac 1 3 (d_{C}(u,x)-d_{C}(u,w'))+(d_T(F(u),F(y))-d_T(F(u),w))\\
&\geq \frac 1 3 (d_{C}(u,x)-d_{C}(u,v))+(d_T(F(u),F(y))-d_T(F(u),F(v))).
\end{align*}
Since $d_{\hat G}(u,x)\leq d_{C}(u,x) + d_{\hat G}(u,v) - d_C(u,v)$, we have
\begin{align}
\E[d_{\hat T}(\hat F(x),\hat F(y))\mid F]&\geq \frac 1 3 (d_{\hat G}(u,x)-d_{\hat G}(u,v))+(d_T(F(u),F(y))-d_{\hat G}(u,v))\nonumber\\
&\geq \frac 1 3 d_{\hat G}(u,x)-\frac {4} 3d_{\hat G}(u,v)+d_T(F(u),F(y))\nonumber\\
&\overset{\eqref{eq:lem:dist:slack}}\geq \frac 1 3 d_{\hat G}(u,x)-\frac {16\cdot 4}{ 3\cdot 160}d_{\hat G}(u,x)+d_T(F(u),F(y))\nonumber\\
&\geq \frac 1 6 d_{\hat G}(u,x)+d_T(F(u),F(y)).\label{eq:lem:dist:II}
\end{align}
Putting \eqref{eq:lem:dist:I} and \eqref{eq:lem:dist:II} together, we can conclude that
\[
\E\left[d_{\hat T}(\hat F(x),\hat F(y))\right]\geq\frac 1 6 d_{\hat G}(u,x)+ \E \left[d_T(F(u),F(y))\right]  \geq
\frac 1 6(d_{\hat G}(x,u)+ d_{\hat G}(u,y)) =
\frac 1 6 d_{\hat G}(x,y).
\]

\medskip
\noindent
{\bf Case III. $x,y\in V(P)$:} In this case we divide the problem into three cases again.
If no shortest path between $x$ and $y$  in the graph $\hat G$ goes through the edge $(u,v)$, then
$$
d _C(x,y) \geq   \min\{d _{\hat G}(x,y), \len(P)-d_{\hat G}(x,y)\}.
$$
Moreover, $\eqref{eq:lem:dist:slack}$ implies that  $d_{\hat G}(x,y)\leq \frac 1 2(\len(P)+d_{\hat G}(u,v))\leq \frac {81}{160}\len(P)$. Thus,
$$
d _C(x,y) \geq \min \Bigl \{d _{\hat G}(x,y), \frac {160}{81}d_{\hat G}(x,y)-d_{\hat G}(x,y)\Bigr\} \geq \frac{79}{81}d_{\hat G}(x,y)\geq \frac 1 2d_{\hat G}(x,y).
$$
Hence, using Lemma~\ref{lem:flat} we can conclude that
\begin{align*}
\E[{d_{\hat T}(\hat F(x),\hat F(y))}\mid F]\geq  \frac 1 3d _C(x,y)\geq  \frac 1 6 d_{\hat G}(x,y).
\end{align*}

If a shortest path between $x$ and $y$ in the graph $\hat G$ passes through the edge $(u,v)$
and $d_{\hat G}(x,y)> \len(P)/16$, then by Lemma~\ref{lem:flat}
\begin{align*}
\E[{d_{\hat T}(\hat F(x),\hat F(y))}\mid F]\!\geq\! \frac 1 3 d_C(x,y)
\!\geq\!  \frac 1 3(d_{\hat G}(x,y)-d_{\hat G}(u,v))
\overset{\eqref{eq:lem:dist:slack}}> \frac 1 3(1-16/160)d_{\hat G}(x,y)
>  \frac 1 6 d_{\hat G}(x,y).
\end{align*}

Finally, we consider the case where a shortest path between $x$ and $y$  in the graph $\hat G$ passes through the edge $(u,v)$
and $d_{\hat G}(x,y)\le \len(P)/16$.  Suppose that $d_{\hat G}(u,x)\leq d_{\hat G}(v,x)$. It is easy to check that we also have $d_{C}(u,x)\leq d_{C}(v,x)$, and that the shortest path between $x$ and $y$ in $C$ also passes through the edge $(u,v)$.
Hence,
by Observation~\ref{obs:simple},
$$
d_{\hat T}(\hat F(x),\hat F(y))=d_C(x,y)=d_{\hat G}(x,u)+d_T(F(u),F(v))+d_{\hat G}(v,y).
$$
Thus,
\begin{align*}
\E\left[d_{\hat T}(\hat F(x),\hat F(y))\right]
&\geq \E \left[d_{\hat G}(x,u)+d_T(F(u),F(v))+d_{\hat G}(v,y)\right]\\
&=d_{\hat G}(x,u)+d_{\hat G}(y,v)+\E \left[d_T(F(u),F(v))\right]\\
&\geq d_{\hat G}(x,u)+d_{\hat G}(y,v)+\frac 1 6d_{\hat G}(u,v)\\
&\geq \frac 1 6d_{\hat G}(x,y),
\end{align*}
completing the proof.
\end{proof}

\subsection{The star-shaped property}

To prove Theorem~\ref{thm:starpreserve}, we need to choose the anchor points such that the resulting map is star-shaped.
The following lemma and its corollary provide the main tools necessary to achieve this.

\begin{lemma}\label{lem:choice}
For any $\alpha\in(0,1/6)$, $\beta\leq \frac 1 4-\frac 3 2\alpha$, and  $\delta < \alpha$ the following holds. Let $P$ be a path with endpoints $u,v$, and let $C=C(P,\delta \len(P))$. For any finite subset $S\subseteq C$, there are points $p,p'\in C$ that are $(\frac 1 4-\frac 3 2\alpha-\beta,\beta)$-apart with respect to $u$ and $v$, and moreover for $q\in \{p,p'\}$,
\begin{enumerate}[\hspace{0.2in}(a)]
\item $d_C(q,u)\leq d_C(q,v)$;
\item For  $x\in V(P)$ with $d_P(x,u)\leq \left(\frac 12+ \alpha\right) \len(P)$, we have $d_C(q,x)\leq d_C(q,u)$;
\item For any two distinct elements $x,y\in S$, we have $d_C(q,x)\neq d_C(q,y)$.
\end{enumerate}
\end{lemma}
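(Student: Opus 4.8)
The plan is to work with an explicit arc-length parametrization of $C$ and to isolate an interval of ``safe'' coordinates from which both anchor points can be chosen. Identify $C$ with $\mathbb R/(\len(C)\mathbb Z)$, where $\len(C)=(1+\delta)\len(P)$, put $u$ at coordinate $0$, traverse $P$ so that it occupies the arc $[0,\len(P)]$ with $v$ at coordinate $\len(P)$, and let the added edge occupy $[\len(P),\len(C)]$; then $x\in V(P)$ sits at coordinate $d_P(x,u)$. I will always locate a candidate anchor point $q$ at coordinate $\sigma\,\len(P)$ for a normalized parameter $\sigma\in[0,1]$ to be constrained, keeping $\sigma\le\tfrac12$, so that $d_C(q,u)=\sigma\,\len(P)$ and $d_C(q,v)=\min(1-\sigma,\delta+\sigma)\,\len(P)$.

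First I claim that every $q$ with
$$\sigma\in J\defeq\Big[\,\tfrac14+\tfrac\alpha2,\ \tfrac12-\tfrac\delta2-\beta(1+\delta)\,\Big]$$
satisfies conditions (a) and (b) and the metric part of the $\big(\tfrac14-\tfrac32\alpha-\beta,\beta\big)$-apart condition relative to $\{u,v\}$. Condition (a) is immediate since $\sigma\le\tfrac12$ forces $d_C(q,u)=\sigma\,\len(P)\le\min(1-\sigma,\delta+\sigma)\,\len(P)=d_C(q,v)$. For (b), let $x\in V(P)$ have coordinate $\tau\,\len(P)$ with $\tau\le\tfrac12+\alpha$; then $\tau\le 2\sigma$ because $\sigma\ge\tfrac14+\tfrac\alpha2$, hence $|\sigma-\tau|\le\sigma$, and therefore $d_C(q,x)\le|\sigma-\tau|\,\len(P)\le\sigma\,\len(P)=d_C(q,u)$. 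For the apart condition I must place $d_C(q,u)=\tfrac{\sigma}{1+\delta}\len(C)$ and $d_C(q,v)$ into $[\beta\len(C),(\tfrac12-\beta)\len(C)]$; the two lower bounds hold on all of $J$ since $\sigma\ge\tfrac14+\tfrac\alpha2>\beta(1+\delta)$ (using $0\le\beta\le\tfrac14-\tfrac32\alpha$ and $\delta<\alpha$), the upper bound on $d_C(q,u)$ amounts to $\sigma\le(\tfrac12-\beta)(1+\delta)$, and the only binding constraint, $d_C(q,v)\le(\tfrac12-\beta)\len(C)$, reduces—using that $\sigma\le\tfrac{1-\delta}2$ throughout $J$, so $d_C(q,v)=(\delta+\sigma)\len(P)$—to $\sigma\le\tfrac12-\tfrac\delta2-\beta(1+\delta)$, the right endpoint of $J$. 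A short computation with $\beta\le\tfrac14-\tfrac32\alpha$ and $\delta<\alpha$ shows $J$ is nonempty; indeed its length is at least $\alpha-\tfrac{3\delta}4+\tfrac32\alpha\delta>\tfrac\alpha4$.

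Now I place the anchors. Set $d^\ast\defeq\big(\tfrac14-\tfrac32\alpha-\beta\big)(1+\delta)$, put $p$ at coordinate $\sigma_p\,\len(P)$ and $p'$ at coordinate $(\sigma_p+d^\ast)\,\len(P)$; since $0\le d^\ast\le\tfrac{1+\delta}2$ this yields $d_C(p,p')=d^\ast\,\len(P)=\big(\tfrac14-\tfrac32\alpha-\beta\big)\len(C)$, exactly the required separation. The admissible choices of $\sigma_p$ are those in $\big[\tfrac14+\tfrac\alpha2,\ \tfrac12-\tfrac\delta2-\beta(1+\delta)-d^\ast\big]$, an interval whose length one computes to be $\alpha-\tfrac{3\delta}4+\tfrac32\alpha\delta$, which is $>\tfrac\alpha4>0$ by $\delta<\alpha$; for any such $\sigma_p$ both $p$ and $p'$ lie in $J$, so (a), (b) and the apart condition hold for each of them. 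For (c): for each unordered pair of distinct points of the finite set $S$, the set of points of $C$ equidistant (in $d_C$) from them has size at most two—the midpoints of the two arcs joining them—so condition (c) fails only on a finite set $B\subseteq C$. The set of $\sigma_p$ for which $p$ or $p'$ falls in $B$ is finite, so we may pick $\sigma_p$ in the positive-length interval above avoiding it, and the resulting $p,p'$ satisfy all the requirements.

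The step I expect to require the most care is the interplay of property (b) with the ``apart'' bounds: property (b) forces $\sigma$ up to at least $\tfrac14+\tfrac\alpha2$, while $d_C(q,v)\le(\tfrac12-\beta)\len(C)$ (that is, $q$ not too near the antipode of $v$) forces $\sigma$ down, and one must check that a nonempty gap remains after dividing through by the normalization factor $(1+\delta)$ that converts distances on $C$ into fractions of $\len(C)$. This is precisely where the slack hypothesis $\delta<\alpha$ enters, and the bookkeeping is cleanest if one carries along the explicit lower bound $\alpha-\tfrac{3\delta}4+\tfrac32\alpha\delta$ on the room available.
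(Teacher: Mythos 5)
Your proof is correct and takes essentially the same route as the paper's: an explicit one-parameter family of anchor pairs at the prescribed separation $\left(\tfrac14-\tfrac32\alpha-\beta\right)\len(C)$, with the slack $\delta<\alpha$ yielding a positive-length window of admissible parameter values and condition (c) secured by avoiding the finitely many parameter values at which some pair of $S$ becomes equidistant. The paper simply asserts the verification of (a), (b) and the apartness bounds for its chosen points; your computation (carried out under the implicit standing assumption $\beta\ge 0$, which the paper's choice of points also requires) supplies exactly that check.
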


\begin{proof}
For $\eta\in (\delta,\alpha)$, it is easy to verify that the two points $p, p' \in C$ with
\begin{eqnarray*}
d_C(u,p)&=&\left(\frac 1 4+\frac {3\alpha} 2-\eta\right)\len(C) \\
d_C(u,p')&=&\left(\frac 1 2-\eta -\beta\right)\len(C)
\end{eqnarray*}
 satisfy conditions (a) and (b). Condition (c) also holds for almost all $\eta \in (\delta,\alpha)$.
\end{proof}

\begin{corollary}\label{cor:choice}
For $\delta\leq 1/160$, the following holds. Let $P$ be a path with endpoints $u,v$, and let $C=C(P,\delta \len(P))$. For any finite
subset $S\subseteq C$, there are points $p,p'\in C$ that are $(1/6,1/16)$-apart with respect to $u$ and $v$, and moreover for $q\in \{p,p'\}$,
\begin{enumerate}[\hspace{0.2in}(a)]
\item $d_C(q,u)\leq d_C(q,v)$;
\item For $x\in V(P)$ with $d_P(x,u)\leq \left(\frac 12+ \frac 1{160}\right) \len(P)$, we have $d_C(q,x)\leq d_C(q,u)$;
\item For any two distinct elements $x,y\in S$, we have $d_C(q,x)\neq d_C(q,y)$.
\end{enumerate}
\end{corollary}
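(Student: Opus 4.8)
The plan is to simply specialize Lemma~\ref{lem:choice} to the numerical values at hand and check that the hypotheses are met. Observe that Corollary~\ref{cor:choice} is the case $\alpha = 1/6$ of Lemma~\ref{lem:choice} --- but we cannot use $\alpha = 1/6$ directly, since Lemma~\ref{lem:choice} requires $\alpha \in (0, 1/6)$ (a strict inequality). So first I would fix some $\alpha$ slightly below $1/6$; in fact any $\alpha \in (1/6 - \eta_0, 1/6)$ will work for a suitably small $\eta_0$, and then take a limit, but it is cleaner to pick one concrete value. The constraint $\delta < \alpha$ is satisfied since $\delta \le 1/160 < 1/6$. The constraint $\beta \le \frac14 - \frac32\alpha$ with our target $\beta = 1/16$ requires $\frac32\alpha \le \frac14 - \frac1{16} = \frac{3}{16}$, i.e.\ $\alpha \le 1/8$; but we want $\alpha$ close to $1/6 > 1/8$, so this does not hold for $\beta = 1/16$ exactly. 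The resolution is that Lemma~\ref{lem:choice} produces a pair that is $(\frac14 - \frac32\alpha - \beta,\ \beta)$-apart, and we get to choose $\beta$ and $\alpha$ to hit the target $(1/6, 1/16)$: we need $\beta = 1/16$ and $\frac14 - \frac32\alpha - \beta = 1/6$, i.e.\ $\frac32\alpha = \frac14 - \frac1{16} - \frac16 = \frac{12 - 3 - 8}{48} = \frac1{48}$, giving $\alpha = 1/72$.

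So the actual plan is: apply Lemma~\ref{lem:choice} with $\alpha = 1/72$ and $\beta = 1/16$. I would then verify the three hypotheses of Lemma~\ref{lem:choice}: (i) $\alpha = 1/72 \in (0, 1/6)$ --- clear; (ii) $\beta = 1/16 \le \frac14 - \frac32\cdot\frac1{72} = \frac14 - \frac1{48} = \frac{12 - 1}{48} = \frac{11}{48}$, and indeed $\frac1{16} = \frac3{48} \le \frac{11}{48}$ --- clear; (iii) $\delta < \alpha$, i.e.\ $1/160 < 1/72$ --- clear since $72 < 160$. Lemma~\ref{lem:choice} then yields points $p, p' \in C$ that are $(\frac14 - \frac32\cdot\frac1{72} - \frac1{16},\ \frac1{16})$-apart with respect to $u$ and $v$, and by the arithmetic above $\frac14 - \frac1{48} - \frac1{16} = \frac{12 - 1 - 3}{48} = \frac{8}{48} = \frac16$, so the pair is exactly $(1/6, 1/16)$-apart, as required.

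It remains to match conditions (a), (b), (c). Conditions (a) and (c) of Corollary~\ref{cor:choice} are verbatim conditions (a) and (c) of Lemma~\ref{lem:choice}, so nothing further is needed. For condition (b): Lemma~\ref{lem:choice} gives $d_C(q,x) \le d_C(q,u)$ for all $x \in V(P)$ with $d_P(x,u) \le (\frac12 + \alpha)\len(P) = (\frac12 + \frac1{72})\len(P)$. Since $\frac1{160} < \frac1{72}$, the set of $x$ with $d_P(x,u) \le (\frac12 + \frac1{160})\len(P)$ is contained in the set of $x$ with $d_P(x,u) \le (\frac12 + \frac1{72})\len(P)$, so condition (b) of the corollary is implied. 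This completes the proof.

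The only mildly delicate point --- and the place I would be most careful --- is getting the arithmetic of the $(\alpha, \beta)$ bookkeeping exactly right so that the output pair is \emph{precisely} $(1/6, 1/16)$-apart rather than merely $(1/6 - \epsilon, 1/16)$-apart; the choice $\alpha = 1/72$, $\beta = 1/16$ is forced by the two linear equations $\beta = 1/16$ and $\frac14 - \frac32\alpha - \beta = 1/6$, and one should double-check that this $\alpha$ still satisfies $\alpha < 1/6$ and $\delta \le 1/160 < \alpha$ (both hold comfortably). Everything else is a direct quotation or a trivial monotonicity observation, so I do not anticipate any real obstacle.
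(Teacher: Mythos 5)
Your proposal is correct and matches the paper's (implicit) derivation: the corollary is stated without a separate proof precisely because it is the specialization of Lemma~\ref{lem:choice} to $\alpha=1/72$, $\beta=1/16$ (forced by $\beta=1/16$ and $\tfrac14-\tfrac32\alpha-\beta=\tfrac16$), with $\delta\le 1/160<1/72=\alpha$ and the monotonicity observation $\tfrac1{160}<\tfrac1{72}$ giving condition (b). Your arithmetic checks out, and your initial remark about not being able to take $\alpha=1/6$ is a harmless detour that you correctly resolve.
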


The next lemma is the
final ingredient that we need to prove Theorem~\ref{thm:starpreserve}.
\begin{lemma}\label{lem:oneend}
Let $G=(V,E)$ be a biconnected outerplanar graph with outer face $C$, and let $d_C$ be the path pseudometric in the induced graph $G[C]$. Consider any point $p \in C$  and edge $(u,v)$ on the outer face, and suppose that $d_C(p,u)< d_C(p,v)$. Then one of the following two conditions hold:
\begin{itemize}
\item $\forall w\in N_G(v): d_C(p,w)\geq d_C(p,u)$;
\item $\forall w\in N_G(u): d_C(p,w)\leq d_C(p,v)$.
\end{itemize}
\end{lemma}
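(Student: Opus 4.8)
The statement is purely combinatorial about the outer cycle $C$ of a biconnected outerplanar graph. The key structural fact I would use is that in a biconnected outerplanar graph every vertex lies on $C$, and every edge is either an edge of $C$ or a chord; moreover, two chords $(a,b)$ and $(c,d)$ (with $\{a,b\}\cap\{c,d\}=\emptyset$) cannot "cross": along the cycle $C$, one of the two arcs between $a$ and $b$ must contain neither $c$ nor $d$. (This noncrossing property is exactly planarity with all vertices on one face.) Parametrize $C$ by arc-length from $p$, so that each vertex $w$ gets a coordinate $d_C(p,w)$, and the hypothesis says $d_C(p,u)<d_C(p,v)$. I want to show: either every neighbor of $v$ has coordinate $\ge d_C(p,u)$, or every neighbor of $u$ has coordinate $\le d_C(p,v)$.

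First I would set up the dichotomy by contradiction: suppose both conditions fail. Then there is a neighbor $w$ of $v$ with $d_C(p,w)<d_C(p,u)$, and a neighbor $w'$ of $u$ with $d_C(p,w')>d_C(p,v)$. Since $(u,v)$ is an edge on the outer face, $u$ and $v$ are consecutive on $C$ (there is an arc of $C$ from $u$ to $v$ containing no other vertices — I should be slightly careful here about which arc, but "$(u,v)$ on the outer face" together with $d_C(p,u)<d_C(p,v)$ pins it down, since $p$ lies on the complementary arc). Now the chord/edge $(v,w)$ and the chord/edge $(u,w')$: using the coordinates, $w$ lies strictly between $p$ and $u$ (in the cyclic sense), while $w'$ lies strictly on the far side of $v$ from $u$. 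Then the two paths $v\!-\!w$ and $u\!-\!w'$ are two chords (or boundary edges) whose endpoint-pairs interleave around $C$ in the order $p,\dots,w,\dots,u,v,\dots,w',\dots$ — precisely a crossing pattern. This contradicts outerplanarity (noncrossing of chords), completing the proof.

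The main obstacle, and where I would spend the most care, is making the "crossing" argument fully rigorous: I need to rule out degenerate cases where $w$ or $w'$ coincides with $u$ or $v$ (handled since the failures give strict inequalities, so $w\ne u,v$ and $w'\ne u,v$), where $(u,v)$ might bound a multi-vertex arc on the $p$-side (it does, but that arc is exactly where $p$ lives and contains no issue), and where an edge like $(v,w)$ is actually a boundary edge of $C$ rather than a chord — but boundary edges connect cyclically-consecutive vertices, which is incompatible with $w$ being separated from $v$ by $u$ along the cycle, so that case cannot arise. The cleanest way to phrase the contradiction is: the four vertices $u,v,w,w'$ appear on $C$ in the cyclic order $w,u,v,w'$ (reading away from $p$), so the pairs $\{u,w'\}$ and $\{v,w\}$ strictly interleave, yet both pairs are joined by edges of $G$; in an outerplanar graph drawn with $C$ as the outer face this forces a crossing of the two edges, contradicting planarity. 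Hence at least one of the two failure assumptions is untenable, which is the claim.
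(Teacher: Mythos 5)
Your proposal is correct and takes essentially the same route as the paper: assume both conditions fail, extract witnesses $w\in N_G(v)$ with $d_C(p,w)<d_C(p,u)$ and $w'\in N_G(u)$ with $d_C(p,w')>d_C(p,v)$, and contradict outerplanarity---the paper phrases the obstruction as a $K_4$ minor in the union of $C$ with the two edges, which is exactly your crossing-chords/interleaving argument. One small caveat: since $d_C$ is the cycle metric, $d_C(p,w)<d_C(p,u)$ does not by itself place $w$ on the $p$-to-$u$ side of the cycle (it could lie on the far side of $p$), but a short case check shows the pairs $\{u,w'\}$ and $\{v,w\}$ interleave in that configuration as well (or the configuration is impossible because the edge $(u,v)$ bounds an arc free of vertices), so the contradiction still goes through; the paper's own proof is equally terse on this point.
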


\begin{figure}[htbp]
\begin{center}
 \includegraphics[height=1.5in]{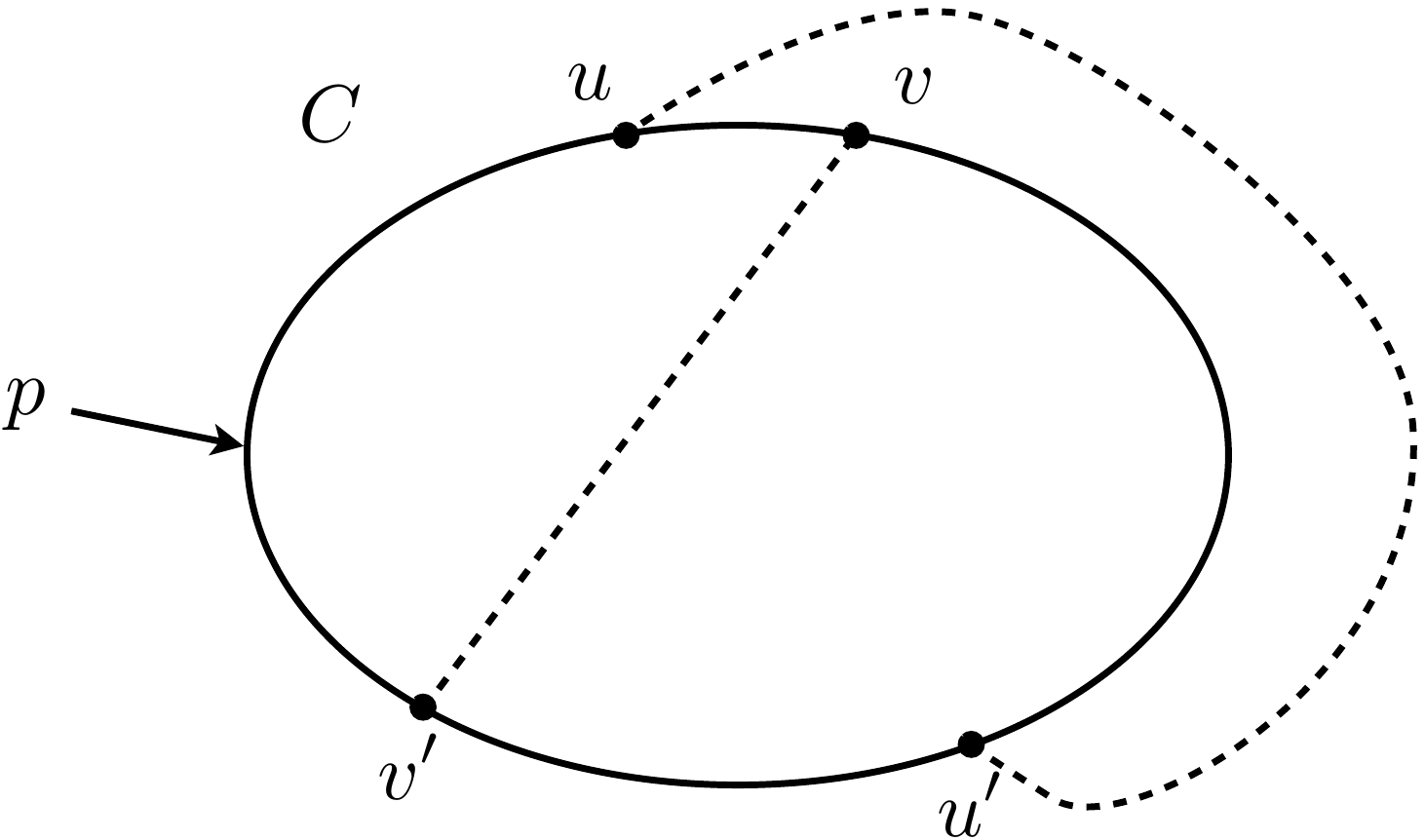}
\caption{Positions of $u, u', v$, and $v'$ on the cycle.}
\label{fig:k4}
\end{center}
\end{figure}

\begin{proof}
For the sake of contradiction suppose that none of the conditions of the lemma hold. Let $v'\in N_G(v)$, be such that $d_C(p,v')<d_C(p,u)$ and let $u'\in N_G(u)$ be such that $d_C(p,u')> d_C(p,v)$ (See Figure~\ref{fig:k4}.) It is easy to check that the union of the cycle $C$ with the edges  $(u,u')$ and $(v,v')$ contains $K_4$ as minor, therefore $G$ cannot be outerplanar, and we have arrived at a contradiction.
\end{proof}

We can now prove Theorem~\ref{thm:starpreserve}.

\medskip
\begin{proof}[Proof of Theorem~\ref{thm:starpreserve}]
Without loss of generality, we may assume that $H$ is 2-connected, as it is trivial to construct the desired
embedding for $H$ from embeddings for each of its 2-connected components.
To construct the embedding, we first use Lemma~\ref{lem:slack} to transform the graph into an $160$-slack graph $H$, with a decomposition into paths $P_1,\ldots P_m$.
Then, we use the random extension algorithm from Section~\ref{subsection:alg} to inductively build an embedding from $H$ into random trees.
To avoid ambiguity, in what follows for $x,y\in V(G)$ and and a mapping $F:V(G)\to V(T)$, we will use the notation $P_{F(x)F(y)}^T$ to denote the unique simple path between $F(x)$ and $F(y)$ in $T$.

We start with the graph $H_1=P_1$ and at step $i\geq 2$ we construct the graph $H_{i}$ by attaching the path $P_i$ to the edge $(u_i,v_i)\in E(H_{i-1})$.  (Note that since $H$ is $2$-connected, the endpoints of $P_i$ are always attached to both $u_i$ and $v_i$.)

We construct the embedding for $H_{i}$ from an embedding $F_{i-1}:V(H_{i-1})\to V(T_{i-1})$ as follows. We  use random extension with anchor points which are $(1/6,1/16)$-apart with respect to $u_i$ and $v_i$ to extend $T_{i-1}$ and $F_{i-1}$ to $T_{i}$ and $F_{i}:V(H_{i})\to V(T_{i})$. Moreover, we choose the anchor points so that the resulting map is injective and it  maintains the following additional property:

 or all $i \geq 1$ and any edge $(x,y)$ on the outer face of $H_i$,  there is at least one endpoint $x$ such that for all $w\in N_G(x)\cap V(H_i)$,
\begin{equation}\label{eq:def:good}
\left( P^{T_i}_{F_i(x)F_i(w)}\subseteq P^{T_i}_{F_i(x)F_i(y)} \right) \lor \left( P^{T_i}_{F_i(x)F_i(w)}\cap P^{T_i}_{F_i(x)F_i(y)}=\{F_i(x)\} \right).
\end{equation}
We call a vertex that satisfies the above property a \emph{good} vertex for the edge $(x,y)$ with respect to $V(H_i)$ and $F_i$.

If the edge $(u_i,v_i)$, we choose the anchor points on the cycle $C=C(P_i,d_T(u_i,v_i))$ such that they satisfy conditions (a) and (b) of Corollary~\ref{cor:choice}, with $v$ being a good vertex of the edge $(u_i,v_i)$ with respect to $T$, and $\delta=d_T(u_i,v_i)/\len(P_i)$. See Figure~\ref{fig:flat2} for an example.
Note that Corollary~\ref{cor:choice}(c) implies that we can always find such anchor points while maintaining the invariant that the map is injective.

\begin{figure}[htbp]
\begin{center}
  \includegraphics[height=1.5in]{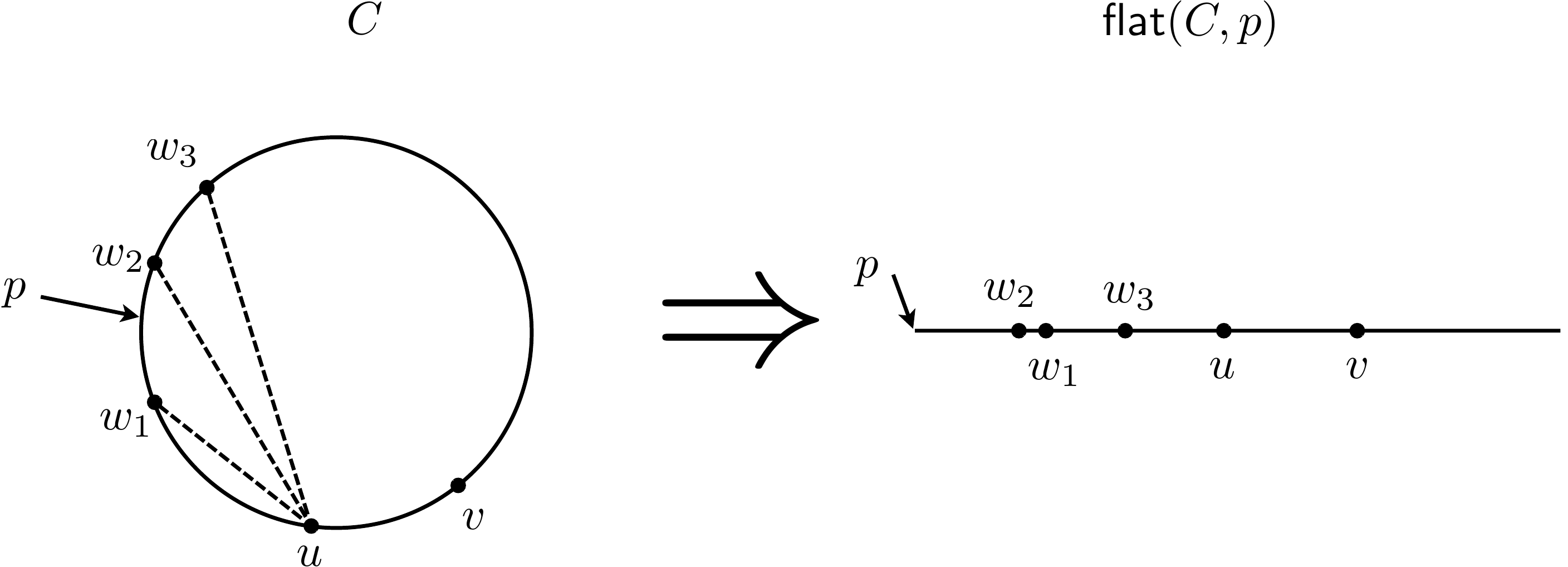}
\caption{If $u$ is not a good vertex for the edge $(u,v)$, then conditions (a) and (b) of Corollary~\ref{cor:choice} imply that $u$ lies between $v$ and $w\in (N_G(u)\cap P_i)\setminus \{v\}$ on the path $\mathsf{flat}(C,p)$.}
\label{fig:flat2}
\end{center}
\end{figure}

Since we choose the anchor points to be $(1/6,1/16)$-apart and $H$ is $160$-slack, by Lemma~\ref{lem:distortion} this embedding is $1$-Lipschitz and has constant distortion.  Thus
we only need to prove the following statements to complete the proof:
\begin{enumerate}
\item Each edge on the outer face of $H_i$ has a good vertex with respect to $V(H_i)$ and $F_i$.
\item This construction produce a star-shaped embedding.
\end{enumerate}

\medskip
\noindent
{\bf Proof of (i).}
Fix a mapping $F_{i-1}:V(H_{i-1})\to V(T_{i-1})$  and let $F_i:V(H_{i})\to V(T_i)$ be the random extension of $F_{i-1}$, where the anchor points are chosen such that they satisfy the conditions (a) and (b) of Corollary~\ref{cor:choice}.
We prove this claim inductively. Suppose that all edges on the outer face of $H_{i-1}$ have a good vertex with respect to $V(H_{i-1})$ and $F_{i-1}$; we show that all edges on the outer face of $H_{i}$ have a good vertex with respect to $V(H_i)$ and $F_i$.
For each edge $(x,y)$  on the outer face of $H_{i}$, we divide the analysis into two main cases.

\medskip
\noindent
{\bf Case I:  $(x,y)\in E(H_{i-1})$.}
Suppose that  $x$ is a a good vertex for the edge $(x,y)$ with respect to $V(H_{i-1})$ and $F_{i-1}$. If $x\notin \{u_i,v_i\}$ or $P^{T_{i-1}}_{F_{i-1}(x)F_{i-1}(y)}\cap P^{T_{i-1}}_{F_{i-1}(u_i)F_{i-1}(v_i)}=\{F_{i-1}(x)\}$  then the inductive hypothesis easily implies that $x$ is also a good vertex for the edge $(x,y)$ with respect to $V(H_i)$ and $F_i$.

If, on the other hand,
$x\in \{u_i,v_i\}$ and $P^{T_{i-1}}_{F_{i-1}(x)F_{i-1}(y)}\cap P^{T_{i-1}}_{F_{i-1}(u_i)F_{i-1}(v_i)}\neq \{F_{i-1}(x)\}$ then, by~\eqref{eq:def:good}  applied to the edge $(x,y)$, $P^{T_{i-1}}_{F_{i-1}(u_i)F_{i-1}(v_i)}$ must be a
subpath of  $P^{T_{i-1}}_{F_{i-1}(x)F_{i-1}(y)}$.
Therefore, by~\eqref{eq:def:good} applied to the edge $(u_i,v_i)$,
the vertex in $\{u_i,v_i\}\setminus \{x\}$ cannot be a good vertex for the edge $(u_i,v_i)$ with respect to $V(H_{i-1})$ and  $F_{i-1}$. Hence, the inductive hypothesis implies that
$x$ is also a good vertex for $(u_i,v_i)$ with respect to $V(H_{i-1})$ and $F_{i-1}$.
In this case for all  $w\in N_G(x)\cap V(P_i)$ by \eqref{eq:lem:slack},  $d_{P_i}(x,w)=d_{H_i}(x,w) \leq (\len(P_i)+d_{H_i}(u_i,v_i))/2$.
Hence conditions (a) and (b) of Corallary~\ref{cor:choice} imply that
\[
P^{T_i}_{F_i(x) F_i(y)}\cap P^{T_i}_{F_i(w) F_i(x)}\subseteq T_{i-1}\cap P^{T_i}_{ F_i(w) F_i(x)}=P^{T_i}_{F_i(u_i) F_i(v_i)}\cap P^{T_i}_{F_i(w) F_i(x)}=\{F_i(x)\}.
\]
 And, in the case that $w\in N_{G}(v)\cap V(H_{i-1})$ since $x$ was a good vertex of $(x,y)$ with respect to $V(H_{i-1})$ and $F_{i-1}$, then $x$ and $w$ also satisfy \eqref{eq:def:good} with respect to $V(H_i)$ and $F_i$.

\medskip
\noindent
{\bf Case II: \bf  $(x,y)\in E(P_i) $.}
Lemma~\ref{lem:oneend} implies that each edge $(x,y)$ on the outer face of the subgraph induced by $G$ on $V(P_i)$  (which necessarily contains $E(P_i)$) has a good vertex with respect to $V(P_i)$ and $F_i$.
For an edge $(x,y)\in E(P_i)$ we prove the following statement. If $x$ is a good vertex for the edge $(x,y)$ with respect to $V(P_i)$ and $F_i$ then $x$ is also a good vertex for the edge $(x,y)$ with respect to $V(H_{i})$ and $F_i$.
Let $w\in N_G(x) \cap V(H_i)$. If $w\in V(P_i)$ then the assumption that $x$ is good for $(x,y)$ with respect to $V(P_i)$ implies that it satisfies~\eqref{eq:def:good}.
Therefore, we only need to verify $\eqref{eq:def:good}$ for $w\in N_G(x)\cap V(H_{i-1})$.

If $x\notin\{u_i,v_i\}$ or $P^{ T_i}_{F_i(x)F_i(y)}\cap P^{T_i}_{F_i(u_i)F_i(v_i)}=
\{ F_i(x)\}$ then for all $w\in N_G(x)\cap V(H_{i-1})$, we have $P^{T_i}_{F_i(x)F_i(y)}\cap P^{T_i}_{F_i(x)F_i(w)}\subseteq \{F_i(x)\}$ and~\eqref{eq:def:good} holds.

Now, we consider the case that $x=v_i$ and $P^{T_i}_{F_i(x)F_i(y)}\cap P^{ T_i}_{F_i(u_i)F_i(v_i)}\neq \{ F_i(x)\}$, where $v_i$ is a good vertex for the edge $(u_i,v_i)$ with respect to $V(H_{i-1})$ and $F_{i-1}$. In this case since $x$ is a good vertex for the edge $(x,y)$ with respect to $V(P_i)$ and $F_i$, then  $P^{T_i}_{F_i(u_i)F_i(v_i)}$ must be a subpath of  $P^{T_i}_{F_i(x)F_i(y)}$. Moreover,~\eqref{eq:def:good} for the edge $(u_i,v_i)$ with respect to $V(H_{i-1})$ and $F_{i-1}$, implies that for $w\in N_G(v_i)\cap V(H_{i-1})$, either
 $P^{T_{i}}_{F_{i}(x)F_{i}(w)}\subseteq P^{T_{i}}_{F_{i}(u_i)F_{i}(v_i)}$
in which case
\[P^{T_i}_{F_i(x)F_i(w)} \subseteq P^{T_{i}}_{F_i(u_i)F_i(v_i)} \subseteq P^{ T_i}_{F_i(x)F_i(y)},\]
or $P^{T_i}_{F_i(x)F_i(w)}\cap P^{T_i}_{F_i(u_i)F(v_i)}=\{ F_i(x)\}$ in which case
\begin{multline*}
P^{ T_i}_{F_i(x)F_i(w)}\cap P^{T_i}_{F_i(x)F_i(y)}=
(P^{ T_i}_{F_i(x)F_i(w)}\cap P^{T_i}_{F_i(u_i)F_i(v_i)}) \cup
( P^{ T_i}_{F_i(x)F_i(w)} \cap (P^{T_i}_{F_i(x)F_i(y)} \setminus P^{T_i}_{F_i(u_i)F_i(v_i)}))
\\
\subseteq \{F_i(x)\} \cup ( P^{ T_i}_{F_i(x)F_i(w)} \cap (P^{T_i}_{F_i(x)F_i(y)} \setminus P^{T_i}_{F_i(u_i)F_i(v_i)}))
,\end{multline*}
but the left hand side is a path whereas the right hand side is a disjoint union of a point and a path, hence $P^{ T_i}_{F_i(x)F_i(w)}\cap P^{T_i}_{F_i(x)F_i(y)}=\{F_i(x)\}$.

The final case is where $x=u_i$. In this case by \eqref{eq:lem:slack},  $d_{P_i}(x,y)=d_{H_I}(x,y) \leq (\len(P)+d_{G}(u,v))/2$. Hence by conditions (a) and (b) of Corollary~\ref{cor:choice}, $P^{T_i}_{F_i(x)F_i(y)}\cap P^{T_i}_{F_i(u_i)F_i(v_i)}=\{F_i(u_i)\}$. This case was already dealt with above.



\medskip
\noindent
{\bf  Proof of (ii).} 
Again, let $F_{i-1}:V(H_{i-1})\to V(T_{i-1})$ a random mapping and let $ F_i:V(H_{i})\to V(T_i)$ be the random extension of $F_{i-1}$.
Note that since $F_i$ is injective, the map is star-shaped if and only if for all $x\in V(H_{i})$, $$\{P^{T_i}_{F_i(x)F_i(y)} : y\in N_G(x) \cap V(H_{i})\}$$ is star-shaped.

We now prove this claim by induction. Suppose that the map $F_{i-1}$ is star shaped; we will show that the map $F_i$ is  also star shaped.
For all vertices $x\in V(H_{i-1})\setminus\{u_i,v_i\}$, by the induction hypothesis,
\[ \{P^{T_{i-1}}_{F_{i-1}(x)F_{i-1}(y)} : y\in N_{G}(x)\cap V(H_{i})\}
=\{P^{T_{i-1}}_{F_{i-1}(x)F_{i-1}(y)} : y\in N_{G}(x)\cap V(H_{i-1})\}\] is star-shaped, and this set remains star-shaped after extension to $F_i$.

For $x\in P_i\setminus \{u_i,v_i\}$, all neighbors of $x$ are mapped to  a path  in
$T_i$, hence $$\{P^{T_i}_{F_i(x)F_i(y)} : y\in N_G(x) \cap V(H_{i})\}$$ is also star-shaped.

Next, suppose that $x=v_i$ is a good vertex for the edge $(u_i,v_i)$ with respect to $V(H_{i-1})$ and $F_{i-1}$.
By the induction hypothesis,
$\{P^{T_{i-1}}_{F_{i-1}(v_i)F_{i-1}(y)} : y\in N_{G}(x)\cap V(H_{i-1})\}$ is star shaped.
Furthermore, since $v_i$  is a good vertex for $(u_i,v_i)$ with respect to $V(H_{i-1})$ and $F_{i-1}$, each  $y\in N_{G}(x)\cap V(H_{i-1})$ is  either on the path  $P^{T_{i-1}}_{F_{i-1}(u_i)F_{i-1}(v_i)}$, or $P^{T_{i-1}}_{F_{i-1}(u_i)F_{i-1}(v_i)}\cap P^{T_{i-1}}_{F_{i-1}(v_i)F_{i-1}(y)}=\{F_{i-1}(v_i)\}$.
Therefore adding the paths $P^{T_i}_{F_i(v_i)F_i(y)}$, for $y\in N_G(v_i)\cap V(P_i)$,
the set $\{P^{T_i}_{F_i(v_i)F_i(y)} : y\in N_{ G}(x)\cap V({H_{i}})\}$ remains star-shaped.

Finally if $x=u_i$, then for  $w\in N_G(x)\cap V(P_i)$ by~\eqref{eq:lem:slack},  $d_{P_i}(x,w)=d_{H_i}(x,w) \leq (\len(P_i)+d_{G}(u_i,v_i))/2$. Hence combining condition (a) and (b) of Corollary~\ref{cor:choice} we can conclude that
\[V(T_{i-1})\cap P^{ T_i}_{F_i(w)F_i(x)}=P^{T_i}_{F_i(u_i)F_i(v_i)}\cap P^{T_i}_{F_i(w)F_i(x)}=\{ F_i(x)\}.\]
In other words, all $G$-neighbors of $x$ in $P_i$ are mapped to a new branch in $T_i$ that  intersects $T_{i-1}$ only at $F_i(x)$. Thus
$\{P^{T_i}_{F_i(x)F_i(y)} : y\in N_G(x) \cap V(H_{i})\}$ is also star-shaped.
\end{proof}

\section{Connected random retractions}
\label{sec:retract}

Our goal now is to complete the proof of Theorem
\ref{thm:main} by showing that every planar graph
can be randomly retracted onto a specified face
in such a way that the face can itself be endowed with an outerplanar metric.
Combining this with our embedding of outerplanar graphs into random trees
from Section \ref{sec:outerplanar}, we will be able to prove Theorem \ref{thm:main};
this is done in Section \ref{sec:retractOP}.

In the next section, we review the notion of ``padded partitions'' of metric spaces.
The existence of such partitions for planar graphs (due to \cite{KPR93}) will be
one of our two central ingredients here.  The other ingredient is the method of
\cite{EGKRTT10} for the construction of random {\em connected} retractions.
They work with a weaker notion of random partitions, so their results
(as stated in \cite{EGKRTT10}) are not strong enough for us.  In Section \ref{sec:retracts},
we follow their proof closely but use padded partitions,
allowing us to obtain the stronger conclusion we require.

\subsection{Padded partitions of graphs}

Random partitions are a powerful tool in the theory of embeddings
of finite metric spaces; see, e.g.,  \cite{Bartal98,Rao99,KLMN04,LN05}.
A particularly powerful notion is that of a ``padded'' partition.
We review the relevant definitions in the special setting of finite metric spaces.

Consider a metric space $(X,d)$.  We will sometimes think of a partition $P$ of $X$ as a map
$P : X \to 2^X$ sending each $x \in X$ to the unique set in $P$ containing it.  We say that $P$ is
{\em $\tau$-bounded} if $\diam(S) \leq \tau$ for every $S \in P$.
We say that a random partition $\mathcal P$ is $\tau$-bounded if this holds
almost surely.  A random partition $\mathcal P$ is {\em $(\alpha,\tau)$-padded} if
it is $\tau$-bounded and, additionally, for every $x \in X$ and $R \geq 0$, we have
$$
\pr[B_X(x,R) \nsubseteq \mathcal P(x)] \leq \alpha \cdot \frac{R}{\tau}\,,
$$
where $B_X(x,R) = \{ y \in X : d(x,y) \leq R \}$.

The main random partitioning result we require is from \cite{KPR93}, though it first appeared in this form later (see \cite{Rao99,KLMN04,LN05}).

\begin{theorem}[\cite{KPR93}]
\label{thm:kpr}
There exists a constant $\alpha > 0$ such that
if $G=(V,E)$ is a metric planar graph, then for every $\tau > 0$,
$(V,d_G)$ admits an $(\alpha,\tau)$-padded random partition.
Furthermore, the distribution of the partition can be sampled
from in polynomial-time in the size of $G$.
\end{theorem}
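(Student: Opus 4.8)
The plan is to obtain the \emph{padded} refinement of the classical Klein--Plotkin--Rao decomposition via a bounded sequence of ``shifted BFS band'' carving rounds, and then to read off the padding property from the metric structure of shortest-path balls. Fix $\tau>0$, and let $W=\tau/C$ be a band width for a constant $C$ to be fixed later. Maintain a partition $P_j$ of $V$ into connected pieces, with $P_0$ the partition into connected components of $G$. In round $j$, for each piece $S\in P_{j-1}$ choose an arbitrary center $c_S\in S$, sample a shift $\beta_S$ uniformly and independently from $[0,W)$, and set $\ell_S(v)=d_{G[S]}(c_S,v)$ (shortest-path distance \emph{within the piece}). Refine $S$ by first grouping its vertices according to the band index $\lfloor (\ell_S(v)+\beta_S)/W\rfloor$, and then splitting each band into its connected components in $G[S]$; let $P_j$ be the result. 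After $t$ rounds, output $\mathcal P=P_t$. Each $P_j$ consists of connected subgraphs, and $\mathcal P$ is clearly a random partition of $V$.

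First I would invoke \cite{KPR93} for the \textbf{diameter bound}: since a planar $G$ excludes $K_5$ (and $K_{3,3}$) as a minor, there is an absolute constant $t$ such that every piece of $P_t$ has (internal, hence also $d_G$-) diameter at most $\tau$ once $C=C(t)$ is chosen appropriately. The mechanism is that a piece surviving all $t$ rounds, together with a long geodesic inside it and the $t$ radial BFS structures used to carve its ancestors, would yield a clique minor of order $\Omega(t)$, contradicting planarity once $t$ is a large enough constant. This is the main technical obstacle, and I would not attempt to reprove or improve it; it gives that $\mathcal P$ is $\tau$-bounded.

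For the \textbf{padding property} I would argue directly. The key elementary fact is that in a shortest-path pseudometric, a ball $B_G(x,R)$ induces a connected subgraph of $G$: if $v\in B_G(x,R)$ and $z$ lies on a shortest $x$--$v$ path, then $d_G(x,z)\le d_G(x,v)\le R$, so $z\in B_G(x,R)$, and the path's edges lie in the induced subgraph. Now fix $x\in V$, $R\ge 0$, and let $\mathcal E_j$ be the event that $B_G(x,R)\subseteq P_{j-1}(x)$ but $B_G(x,R)\not\subseteq P_j(x)$; then $\{B_G(x,R)\not\subseteq \mathcal P(x)\}=\bigsqcup_{j=1}^{t}\mathcal E_j$. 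Condition on $B_G(x,R)\subseteq P_{j-1}(x)=:S$. Every shortest $x$--$v$ path with $v\in B_G(x,R)$ has all its vertices in $B_G(x,R)\subseteq S$, so such paths stay inside $S$ and $d_{G[S]}(x,v)\le R$ for all $v\in B_G(x,R)$; hence for $u,v\in B_G(x,R)$, $\lvert \ell_S(u)-\ell_S(v)\rvert\le d_{G[S]}(u,v)\le 2R$, i.e.\ $\ell_S$ takes values on $B_G(x,R)$ within an interval of length at most $2R$. Over the uniform shift $\beta_S$, the probability that some band boundary $kW-\beta_S$ ($k\in\mathbb Z$) falls into that interval is at most $\min(2R/W,1)\le 2R/W$; off this event all of $B_G(x,R)$ lies in a single band, and since $B_G(x,R)$ induces a connected subgraph of $G[S]$ it then lies in a single connected component of that band, i.e.\ in $P_j(x)$. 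Therefore $\pr[\mathcal E_j]\le 2R/W$, and a union bound yields $\pr[B_G(x,R)\not\subseteq \mathcal P(x)]\le 2tR/W=(2tC)\cdot R/\tau$. So $\mathcal P$ is $(\alpha,\tau)$-padded with $\alpha=2tC=O(1)$.

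Finally, for the algorithmic statement: each round consists of one shortest-path computation from each center, an $O(1)$-time bucketing, and a connected-components pass, all polynomial in $\lvert G\rvert$; since $t$ is an absolute constant, the whole sampling procedure runs in polynomial time. The only non-elementary input is the $O(1)$-round diameter reduction of \cite{KPR93}, which I take as given; everything else (the connectivity of metric balls, the single-round straddle estimate, the union bound) is routine.
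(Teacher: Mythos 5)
This theorem is imported from the literature---the paper gives no proof of its own, citing \cite{KPR93} (with the padded formulation appearing in the later works it mentions, \cite{Rao99,KLMN04,LN05})---and your derivation is precisely the standard one from those references: random-shift band carving in each round, the constant-round KPR diameter bound taken as a black box, and the padding read off from the connectivity of shortest-path balls together with the straddling-boundary estimate and a union bound over the $O(1)$ rounds; the argument is correct, including the conditioning on $B_G(x,R)\subseteq P_{j-1}(x)$ being independent of the fresh shift. One small caveat: \cite{KPR93} bounds the weak ($d_G$-)diameter of the surviving pieces rather than the internal diameter you parenthetically claim, but since $\tau$-boundedness in this paper is defined with respect to $d_G$, this does not affect your proof.
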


\subsection{Random retractions}
\label{sec:retracts}

We now use random partitions to construct random retractions.  This was first done in \cite{CKR01} in the
context of the 0-extension problem on graphs.  Further work includes \cite{LN05},
which concerns the Lipschitz extension problem, and
\cite{LS09}, where the authors are primarily concerned with randomly simplifying the topology of metric graphs.
The proof of the next theorem follows from the techniques of \cite{EGKRTT10} for constructing a connected retraction.
We are able to obtain a stronger conclusion by using a stronger assumption about the random partitions.

\begin{theorem}\label{thm:connretract}
Let $G=(V,E)$ be a metric graph and suppose that for some $\alpha \geq 2$ and every $\tau \geq 0$,
$(V,d_G)$ admits an $(\alpha,\tau)$-padded random partition.
Then for any subset $S \subseteq V$, there exists a random mapping $F : V \to S$ such that the following properties hold.
\begin{enumerate}
\item For every $x \in S$, $F(x)=x$.
\item For every $x \in V$ and $\tau > 0$, $\E\,|\nabla_{\tau} F(x)|_{\infty} \leq O(\alpha\log \alpha)$.
\item For every $x \in S$, the set $F^{-1}(x)$ is a connected subset of $G$.
\end{enumerate}
\end{theorem}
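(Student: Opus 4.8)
I would follow the connected random retraction of \cite{EGKRTT10} essentially verbatim for the construction, and then re-run their probability estimates using the \emph{padded} partitions supplied by the hypothesis; this is the only place the extra strength of padding is needed, and it is what upgrades their $O(\log k)$-type conclusion to the single-scale $\ell_\infty$ gradient bound we want.

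\textbf{The construction.} Fix dyadic scales $\Delta_j = 2^j$, $j \in \mathbb Z$, and for each $j$ draw (independently across scales) an $(\alpha,\Delta_j)$-padded partition $\mathcal P_j$ of $(V,d_G)$, together with a uniformly random ordering $\pi$ of $S$. Build $F$ by region growing: set $\mathrm{reg}(s) = \{s\}$ for each $s\in S$ (this already forces $F(s)=s$, giving property 1), declare all other vertices unassigned, and then process the scales $j$ from small to large, processing terminals within each scale in the order $\pi$; when terminal $s$ is processed at scale $j$ we replace $\mathrm{reg}(s)$ by the connected component of $s$ in the subgraph of $G$ induced on $\mathrm{reg}(s)\cup\bigl(\mathcal P_j(s)\cap\{\text{currently unassigned}\}\bigr)$. (One argues, exactly as in \cite{EGKRTT10}, that by a top scale with $\Delta_j\ge\diam(G)$ every vertex has been assigned, adding a final deterministic clean-up pass if needed.) Since $\mathrm{reg}(s)$ is always a connected set containing $s$ and the regions stay pairwise disjoint, $F^{-1}(s)=\mathrm{reg}(s)$ is connected, giving property 3; and since each scale-$j$ step enlarges a region by a subset of a set of diameter $\le\Delta_j$, we get the diameter control $\mathrm{reg}(s)\subseteq B_G(s,\sum_{j'\le j}\Delta_{j'})\subseteq B_G(s,2\Delta_j)$ at the end of scale $j$, hence $d_G(v,F(v))\le 2\Delta_{\mathrm{lev}(v)}$ where $\mathrm{lev}(v)$ is the scale at which $v$ is assigned.

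\textbf{Reducing property 2 to padding estimates.} Fix $x\in V$ and $\tau>0$, and let $N$ consist of $x$ together with all its neighbors $v$ with $\len(x,v)\in[\tau,2\tau]$; note the set $\{\{x,v\}:v\in N\setminus\{x\}\}$ is a star, so if $F$ is constant on $N$ then $|\nabla_\tau F(x)|_\infty=0$. On the complementary event (some $N$-edge is cut), the triangle inequality and the diameter bound give, for any cut neighbor $v$,
\[
d_G(F(x),F(v))\le d_G(F(x),x)+\len(x,v)+d_G(v,F(v))\le 2\Delta_{\mathrm{lev}(x)}+2\tau+2\Delta_{\mathrm{lev}(v)}\le 4\Delta_{M}+2\tau,
\]
where $M=\max_{w\in N}\mathrm{lev}(w)$; hence $|\nabla_\tau F(x)|_\infty\le 4\Delta_M/\tau+2$ on the cut event, so it suffices to bound $\E\bigl[(\Delta_M/\tau)\,\mathbf{1}[\text{cut}]\bigr]$ and $\pr[\text{cut}]$. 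The padding property enters through two estimates of the \cite{EGKRTT10} type, now with the parameter $\alpha$ in place of their weaker separation parameter: (i) for each $w\in N$ and each scale $\Delta$, $\pr[\mathrm{lev}(w)\ge\log_2\Delta]\le O(\alpha\, d_G(w,S)/\Delta)$ --- heuristically, as soon as some $\mathcal P_j(w)$ contains the terminal nearest to $w$, $w$ is captured at scale $\le j$; and (ii) the probability that some $N$-edge is cut at scale $\Delta$ is $O(\alpha\tau/\Delta)$, since such a cut forces an $O(\tau)$-ball about a vertex of $N$ to be split by $\mathcal P_{\log_2\Delta}$. Summing (ii) over $\Delta\ge\tau$ gives $\pr[\text{cut}]=O(\alpha)$, and combining (i) and (ii) scale by scale --- the contribution of scale $\Delta$ to $\E[(\Delta/\tau)\mathbf 1[\text{cut}]]$ is at most $(\Delta/\tau)$ times the chance of surviving unassigned to that scale times the chance of a cut there --- produces a sum that is geometrically controlled once $\Delta$ exceeds $\alpha d_G(x,S)$, yielding the bound $O(\alpha\log\alpha)$ and hence property 2.

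\textbf{The main obstacle.} The delicate point is the interaction of the \emph{connectedness} requirement with these probability estimates: because a region can grow only through currently-unassigned vertices and through its own cluster, the clean event "$\mathcal P_j(w)$ contains a nearby terminal'' does not by itself guarantee that $w$ is captured at scale $j$ --- a rival region may have already severed the connection. Making estimate (i) (and the "robustly uncut'' event behind (ii)) rigorous therefore requires the peeling argument of \cite{EGKRTT10}: process the terminals in the random order $\pi$, and when a would-be capture of $w$ by its nearest terminal is blocked, charge the failure to a separation event for an intervening vertex, which padding again controls. Carrying this bookkeeping out with padded partitions --- so that \emph{every} resulting event is bounded by an $\alpha/\Delta$-type quantity rather than only the $\log k$-type bound of the original argument --- is where the real work lies, and is precisely what delivers the scale-by-scale gradient estimate (as opposed to a global one, which, as noted in the introduction, genuinely fails for this map).
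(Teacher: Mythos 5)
There is a genuine gap: property (ii) is never actually proved. Your reduction leaves everything resting on estimates (i) and (ii), and you yourself state that making them rigorous requires a peeling/charging argument whose "bookkeeping \dots is where the real work lies" --- and that work is not carried out. This is not a routine verification you are entitled to defer: the entire content of the theorem beyond the easy properties (i) and (iii) is exactly this single-scale gradient bound, and the interaction you flag (a rival region severing the connection between a vertex and its nearest terminal, so that "the cluster of $w$ contains a nearby terminal" does not imply capture) is a real obstruction for the construction you chose. Your terminal-centric region growing, in which a region may only expand through \emph{currently unassigned} vertices of the terminal's own cluster, is what creates the blocking problem, and with it the need for the random ordering $\pi$ and the charging scheme; none of that machinery is supplied, and the claimed $O(\alpha\log\alpha)$ bound is asserted rather than derived (in particular, the scale-by-scale combination of (i) and (ii) ignores the correlations between the level variables and the cut events across scales, which must be handled by explicit conditioning).

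The paper avoids all of this by a different capture rule: a vertex $x$ joins the assigned set at scale $k$ as soon as the connected component of $x$ inside its \emph{own} cluster $P_k(x)$ --- a component that may pass through already-assigned vertices --- meets the previously assigned set, and each newly captured connected component is glued to one assigned neighbor. With this rule there is no blocking, so no random ordering of terminals and no peeling argument are needed: one gets immediately (by induction) that $d_G(x,S)\le d_G(x,F(x))<2^{L(x)+1}$, that the minimum level in a ball $B=B_G(x,2\tau)$ has the tail bound $\mathbb{P}(\text{min level}\ge k)\le\prod_{i<k}\min\bigl(1,\,2\alpha\Delta/2^{i}\bigr)$ via padding around the terminal nearest to $B$, and that if $B$ lies in a single cluster at the minimum level then $F(B)$ is a single point. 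The gradient bound then follows from a self-contained conditional-expectation computation using independence of the partitions across scales and the observation that, above the minimum level, "$B$ uncut at scale $k$" forces the maximum level to be at most $k$. So the missing idea in your proposal is precisely the capture rule that dissolves the obstacle you identified; without it (or without actually executing the charging argument your construction demands), the proof of property (ii) is absent.
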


\begin{proof}
Since we are dealing with finite graphs,
without loss of generality, we may assume that $d_G(S, V \setminus S) > 1$.
Let $k_0 = \lceil \log_2 \diam_G(V) \rceil$.
First, we let $\{P_k : 1 \leq k < k_0 \}$ be a sequence of independent $(\alpha,2^k)$-padded
random partitions.
We define $P_{k_0}=\{V\}$.
Following \cite{EGKRTT10}, we inductively
define a sequence of random maps $\{F_k : V_k \to S : 1 \leq k \leq k_0 \}$,
where $V_k \subseteq V$ and $(F_{k+1})|_{V_k} = F_k$ for each $k \in \mathbb N$.
We will define $F = F_{k_0}$.

First, we put $V_0 = S$ and $F_0(x)=x$ for $x \in S$.
Now suppose that $V_{k-1}$ and $F_{k-1}$ are defined
for some $k \geq 1$.
We will use the notation $\hat P_k(x)$ for the connected component of
$G[P_k(x)]$ containing $x$.   We let
$$
V_k = \{ x \in V : \hat P_k(x)\cap V_{k-1} \neq \emptyset \}\,
$$
be the set of vertices which are connected to $V_{k-1}$ through their set in $P_k$.

For every $T \in P_k$ and every non-empty connected component $C$ of $G[T \cap (V_k \setminus V_{k-1})]$,
let $v_C$ be a neighbor of $C$ in $V_{k-1}\cap P_k(x)$, which must exist by the definition of $V_k$.
For $x \in C$, we define $F_k(x)=F_{k-1}(v_C)$.  For all other $x \in V_k$,
we must have $x \in V_{k-1}$, and we put $F_k(x)=F_{k-1}(x)$.
By definition, we have $V_{k_0}=V$ and we define $F=F_{k_0}$.

By construction, property (i) holds.
Property (iii) follows easily by induction:  It is true for every $x \in S$ and $0 \leq k \leq k_0$ that $F_k^{-1}(x)$ is connected.
We are thus left to verify property (ii).
For every vertex $x \in V$, define $L(x) = \min \{k : x \in V_k \}$.  We make
the following claim.

\begin{claim}\label{claim:tech}
For every $x \in V$,  $d_G(x,S) \le d_G(x,F(x)) < 2^{L(x)+1}$
\end{claim}

\begin{proof}
The first inequality is immediate since $F(x)\in S$. The second
 follows by induction:  For any $0 \leq k \leq k_0$, we claim that if $x \in V_k$, then
$d_G(x,F_k(x)) < 2^{k+1}$.  This is clear for $L(x)=0$ since $V_0=S$ and $F_0(x)=x$ for all $x \in S$.
If $L(x)=k > 0$, then $x \in V_{k} \setminus V_{k-1}$, hence $F_k(x)=F_{k-1}(y)$ for some $y \in P_k(x) \cap V_{k-1}$.
Thus, $$d_G(x,F_k(x)) \leq \diam(P_k(x)) + d_G(y,F_{k-1}(y))\,.$$
Since $P_k$ is $2^k$-bounded, we have $\diam(P_k(x)) \leq 2^k$ and by
induction, $d_G(y,F_{k-1}(y)) < 2^{k}$.  It follows that $d_G(x,F_k(x)) < 2^{k+1}$, completing the proof.
\end{proof}

\noindent
Now fix $x \in V$ and $\tau > 0$ and let $B = B_G(x,2\tau)$.
We will employ the bound
\begin{equation}\label{eq:stretch}
|\nabla_{\tau} F(x)|_{\infty} \leq \frac{\diam_G(F(B))}{\tau}\,.
\end{equation}
Let $\maxB = \max \{ L(x) : x \in B \}$ and $\minB = \min \{ L(x) : x \in B \}$.
Using the triangle inequality and Claim \ref{claim:tech}, we have
\begin{equation*}
\diam_G(F(B)) \leq 2^{\maxB + 2} + 4 \tau\,.
\end{equation*}

Let $\mathcal E_k$ be the event that $B \nsubseteq P_k(x)$.
Observe that $$\lnot \mathcal E_{\minB} \implies |F(B)|=1 \implies \diam_G(F(B))=0,$$
because in this case, every vertex in $B$ is in the same connected component of $x$ in $G[P_k(x) \setminus V_{k-1}]$,
where $k = \minB$ (they are all connected through $x$).
Hence,
\begin{equation}\label{eq:local}
\diam_G(F(B)) \leq 4\tau + \1_{\{ \mathcal E_{\minB}\}} \cdot 2^{\maxB + 2}\,.
\end{equation}
Let $\Delta= \min\{d_G(y,S):y\in B\}$, and $m=\lceil \log_2 \max(1,\Delta)\rceil$. By Claim \ref{claim:tech}, we have $\minB\geq m-1$, hence
\begin{eqnarray}
\E \left[  \1_{\{ \mathcal E_{\minB}\}} \cdot 2^{\maxB + 2}\right] &=& \sum_{k\geq m-1} \pr(k=\minB, \mathcal{E}_{k}) \cdot \E\left[2^{\maxB+2} \,\Big|\, \mathcal{E}_k, \minB= k\right]
\label{eq:summing}
\end{eqnarray}
Now, observe that, for $k\geq \minB$,
 $$\lnot \mathcal E_{k} \implies \maxB\leq k,$$
because, in this case, $P_k(x)$ must contain a vertex $v \in V_\minB$ and a shortest-path from $v$ to $x$,
implying that $B_G(x,2\tau) \subseteq V_k$.
The padding property implies that
$$
\pr\left(\mathcal E_j \right) \leq {2\alpha\tau\over 2^j}\,.
$$
In particular, for any $k \geq \max(\minB, \log_2 2\alpha \tau)$, we have
\begin{equation}\label{eq:maxeq}
\pr(\maxB \geq k+j \mid \mathcal E_k, \minB = k) \leq \prod_{i=k+1}^{k+j-1}\pr\left(\mathcal E_i \right) \leq \prod_{i=k+1}^{k+j-1}2^{k-i} \leq 2^{-(2j-3)}\,.
\end{equation}
Using this yields
\begin{equation}\label{eq:maxB}
\E\left[2^{\maxB+2} \,\Big|\, \mathcal E_k, \minB = k\right] \leq  4\cdot \max(2^{k}, 2\alpha\tau) \sum_{j=0}^{\infty} 2^{-(2j-3)}\cdot 2^{j} \leq 2^{k+6}+128\alpha \tau\,.
\end{equation}
Combining \eqref{eq:summing} and \eqref{eq:maxB}, we can write
\begin{eqnarray}
\nonumber\E \left[  \1_{\{\mathcal E_\minB\}} \cdot 2^{\maxB + 2} \right] &\leq& \sum_{k \geq m-1} \pr(k=\minB, \mathcal E_k) \cdot (2^{k+6}+128\alpha \tau)\\
\nonumber&\leq& 128\alpha \tau+ \sum_{k \geq m-1} \pr(k= \minB,\mathcal E_k)2^{k+6}\\
&\leq& 128\alpha \tau+ \sum_{k \geq m-1} \pr(k\leq  \minB)\pr(\mathcal E_k)2^{k+6},\label{eq:summing2}
\end{eqnarray}
where the last inequality holds because
\[ \pr(k= \minB,\mathcal E_k)\leq \pr(k-1< \minB,\mathcal E_k) = \pr(\minB>k-1)\cdot \pr(\mathcal E_k \mid \minB>k-1) =\pr(\minB>k-1)\cdot \pr(\mathcal E_k).
\]
Let $y\in B$ and $w\in S$ be a pair of points such that $d_G(y,w)=\Delta$. It is easy to check that if for some $k$, $B_G(w,\Delta)\subseteq P_k(w)$ then $\minB\leq k$,
thus the properties of padded partitions imply that
\[
\pr(\minB\geq k)\leq \prod_{i<k} \pr(B(w,\Delta)\nsubseteq P_i(w))\leq \prod_{i<k} \min\left(1,{2\alpha 2^m\over 2^i}\right)\leq \min\left(1,{2\alpha 2^m\over 2^{k-1}}\right)
\]
Combining this with \eqref{eq:summing2}, we can conclude that
\begin{eqnarray*}
\E \left[  \1_{\{\mathcal E_\minB\}} \cdot 2^{\maxB + 2} \right]
&\leq& 128\alpha \tau+ \sum_{k \geq m-1} \min\left(1, {2\alpha 2^m\over 2^{k-1}}\right)\left(\frac{2\alpha\tau} {2^k}\right) 2^{k+6}\\
&=& 128\alpha \tau+ \sum_{k \geq m-1} \min\left(1, {2\alpha 2^m\over 2^k}\right)(128\alpha \tau)\\
&=& 128\alpha \tau+\sum_{k=m-1}^{ m+\lceil \log_2 \alpha\rceil } 128\alpha \tau+\sum_{k > m+\lceil \log_2 \alpha\rceil } \left({2\alpha 2^m\over 2^k}\right)(128\alpha \tau)\\
&\leq &O(\alpha\tau)+O\left(\alpha \tau \log \alpha\right)+O(\alpha\tau) \\
&\leq & O\left((\alpha\log \alpha) \tau\right).
\end{eqnarray*}

Combining this with \eqref{eq:stretch} and \eqref{eq:local} completes the verification of property (ii).
\end{proof}

\subsection{Retracting to an outerplanar graph}
\label{sec:retractOP}

Finally, we use the random retractions of the preceding section
to randomly embed every metric on the face of a planar
graph into an outerplanar graph in a suitable way.
This technique is also taken from \cite{EGKRTT10},
although again we require some stronger properties
of the embedding.

\begin{theorem}\label{thm:ontoface}
There is a constant $K > 1$ such that
for any metric planar graph $G=(V,E)$ and face $V_0 \subseteq V$,
there is a random outerplanar metric graph $H$ and a random mapping $F : V\to V(H)$
satisfying the following:
\begin{enumerate}
\item For every edge $\{u,v\} \in E$, either $F(u)=F(v)$ or $\{F(u),F(v)\} \in E(H)$.
\item For every $u,v \in V_0$, $d_{H}(F(u),F(v)) \geq d_G(u,v)$.
\item For every $u \in V$ and $\tau \geq 0$, we have $\E\,|\nabla_{\tau} F(u)|_{\infty} \leq K$.
\end{enumerate}
\end{theorem}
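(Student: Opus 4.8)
The plan is to combine the padded random partition guarantee for planar graphs (Theorem~\ref{thm:kpr}) with the connected random retraction machinery (Theorem~\ref{thm:connretract}), and then to argue that when we retract $G$ onto the face $V_0$, the quotient structure we obtain is itself (the metric on) an outerplanar graph. First I would apply Theorem~\ref{thm:kpr} to get, for the constant $\alpha>0$ from that theorem, an $(\alpha,\tau)$-padded random partition of $(V,d_G)$ for every $\tau>0$; replacing $\alpha$ by $\max(\alpha,2)$ if necessary, I can feed this into Theorem~\ref{thm:connretract} with $S=V_0$. That yields a random map $F_0:V\to V_0$ which is the identity on $V_0$, has $\E\,|\nabla_\tau F_0(u)|_\infty = O(\alpha\log\alpha)=:K$ for all $u,\tau$, and has the property that $F_0^{-1}(x)$ is connected in $G$ for every $x\in V_0$. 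This is exactly the ``contract each fiber to a point'' situation: form the graph $H'$ whose vertex set is $V_0$ and whose edges are $\{F_0(u),F_0(v)\}$ for $\{u,v\}\in E$ with $F_0(u)\neq F_0(v)$, with edge lengths to be chosen below. Property (i) of the theorem is then immediate by construction of $H'$.

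The key topological point is that $H'$ is outerplanar, and this is where I would borrow the argument of \cite{EGKRTT10}: contracting a planar graph along connected, vertex-disjoint subsets (the fibers $F_0^{-1}(x)$) yields a planar minor, and since every fiber meets $V_0$ in exactly the single vertex $x$ — i.e. the contracted vertices all lie on what was the face $V_0$ — all vertices of $H'$ lie on a single face of the contracted drawing, so $H'$ is outerplanar. More carefully: take the planar embedding of $G$ with $V_0$ on the outer face; each fiber $F_0^{-1}(x)$ is connected and touches the outer boundary at $x$, and distinct fibers are disjoint, so contracting each fiber to its representative $x$ gives a planar graph in which all of $V_0$ still lies on the outer face. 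I would spell this out using the standard fact that contracting a connected subgraph preserves planarity and that a vertex created by such a contraction can be drawn in any face incident to the contracted set.

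For the metric properties (ii), I would set the length of each edge $\{x,y\}\in E(H')$ to be $\len_H(x,y)=d_G(x,y)$ (then reduce: replace each edge length by the shortest-path distance in $H'$, which only shortens distances but keeps the graph outerplanar and keeps all edges present as geodesics — this matches the ``reduced length function'' convention of the Preliminaries). With this choice, $d_H \le d_G$ restricted to $V_0$ is \emph{not} what we want; rather we want the reverse inequality $d_H(F(u),F(v))\ge d_G(u,v)$ for $u,v\in V_0$. This follows because any path in $H$ between $x,y\in V_0$ corresponds (edge by edge, using $\len_H(x',y')=d_G(x',y')$ before reduction, hence $\len_H(x',y')\ge d_G(x',y')$ would be false) — so instead I would define $\len_H(x,y)$ to be \emph{at least} $d_G(x,y)$; the cleanest choice that also keeps property (iii) under control is to keep $\len_H(x,y)=d_G(x,y)$ and observe that an $H$-path $x=z_0,z_1,\dots,z_k=y$ gives $d_G(x,y)\le\sum_i d_G(z_{i-1},z_i)=\sum_i\len_H(z_{i-1},z_i)$, i.e. $d_G(x,y)\le d_H(x,y)$ on $V_0$, which is precisely (ii). Finally, for (iii): $|\nabla_\tau F(u)|_\infty$ measures, over edges $\{u,v\}$ of $G$ with $\len_G(u,v)\in[\tau,2\tau]$, the ratio $d_H(F(u),F(v))/\len_G(u,v)$, and since $d_H(F(u),F(v))\le\len_H(F(u),F(v))=d_G(F(u),F(v))$ whenever $F(u)\ne F(v)$ (and $=0$ otherwise), this is at most $d_G(F(u),F(v))/\len_G(u,v)=|\nabla_\tau F_0(u)|_\infty$; taking expectations and invoking Theorem~\ref{thm:connretract}(ii) gives $\E\,|\nabla_\tau F(u)|_\infty\le K$.

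The main obstacle I anticipate is the interplay in the last paragraph: I need $d_H$ on the image of $V_0$ to \emph{dominate} $d_G$ (for (ii)) while simultaneously $|\nabla_\tau F|_\infty$ on \emph{all} edges of $G$ should be \emph{bounded above} by the gradient of the retraction (for (iii)). The resolution is that (ii) only concerns pairs in $V_0$ where $F$ is the identity composed with a retraction onto $V_0$ and $H$ is built by contraction — so shortest $H$-paths cannot be shorter than $d_G$ because each $H$-edge has length equal to a genuine $d_G$-distance — whereas (iii) is an upper bound that just uses $d_H\le$ (sum of edge lengths along the single $G$-edge's image, length one), which is exactly $d_G(F(u),F(v))$. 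Getting the reduction step to respect both simultaneously is the delicate bookkeeping; everything else is routine once the outerplanarity of the contraction is established via the \cite{EGKRTT10} argument.
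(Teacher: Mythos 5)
Your proposal is correct and follows essentially the same route as the paper: apply Theorem~\ref{thm:kpr} to feed padded partitions into Theorem~\ref{thm:connretract} with $S=V_0$, contract the connected fibers to get an outerplanar graph $H$ on $V_0$ with edge lengths $d_G(x,y)$, and read off (i) from the construction, (ii) from the triangle inequality along $H$-paths, and (iii) from the gradient bound of the retraction since $d_H(F(u),F(v))\le d_G(F(u),F(v))$ on each contracted $G$-edge. The back-and-forth in your middle paragraph resolves itself correctly, and the reduction step you worry about is harmless since it does not change $d_H$.
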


\begin{proof}
By Theorem \ref{thm:kpr}, we can
apply Theorem \ref{thm:connretract}
to the metric graph $G$ with $S=V_0$.
Let $F : V \to V_0$
be the random mapping guaranteed by Theorem \ref{thm:connretract}.

We construct
a metric graph $H$ with vertex set $V_0$ and an edge $\{u,v\}$ of length $d_G(u,v)$
whenever there is an edge between the sets $F^{-1}(u)$ and $F^{-1}(v)$ in $G$.
Since the sets $\{F^{-1}(u) : u \in V_0\}$ are connected, the resulting graph $H$
is outerplanar.  Also, property (i) is immediate.

Property (ii) follows because $F$ is the identity on $V_0$
and the edges in $H$ have length equal to the distance between their endpoints in $(V_0, d_G)$.
Property (iii) follows from Theorem \ref{thm:connretract}(i).
\end{proof}

We can now complete the proof of Theorem \ref{thm:main}.

\begin{proof}[Proof of Theorem \ref{thm:main}]
Let $\Lambda_1 : V \to V(H)$ be the random mapping
from $V$ onto the vertices of an outerplanar metric graph $H$
guaranteed from Theorem \ref{thm:ontoface}.
Let $\Lambda_2 : V(H) \to V(T)$ be the random mapping of $H$ into trees from Theorem \ref{thm:tree-embed}.
The mapping $\Lambda= \Lambda_2 \circ \Lambda_1 :V\to V(T)$ is mapping guaranteed by the theorem.
Combining the star-shaped property of Theorem \ref{thm:tree-embed} with Theorem \ref{thm:ontoface}(i) implies
that $\Lambda$ is star-shaped.  Property (ii) of Theorem \ref{thm:main} is
a consequence of Theorem \ref{thm:ontoface}(ii) and Theorem \ref{thm:tree-embed}.
Finally, property (i) follows from the Lipschitz condition of Theorem \ref{thm:tree-embed}
and property (iii) of Theorem \ref{thm:ontoface}.
\end{proof}

\subsection*{Acknowledgements}

We are grateful to Chandra Chekuri and Bruce Shepherd for bringing the node-capacitated
problem to our attention and for their encouragement during the course of the project.

\bibliographystyle{alpha}
\bibliography{nodeoks-mm}

\end{document}